\pdfoutput = 1

\documentclass[11pt]{article}
\usepackage{fullpage}

\usepackage[utf8]{inputenc} 
\usepackage[T1]{fontenc}    
\usepackage{hyperref}       
\usepackage{url}            
\usepackage{booktabs}       
\usepackage{amsfonts}       
\usepackage{nicefrac}       
\usepackage{microtype}      
\usepackage{xcolor}         

\usepackage[pdftex]{graphicx}
\usepackage{subcaption}

\usepackage{amsfonts}       
\usepackage{amssymb}
\usepackage{amsmath}
\usepackage{amsthm}
\usepackage{mathtools}
\usepackage{multicol}
\usepackage{multirow}

\usepackage{color}

\usepackage{array}
\usepackage{algorithm}
\usepackage{algorithmic}

\usepackage[shortlabels]{enumitem}

\usepackage[symbol]{footmisc} 

\def\M{\mathcal{M}}

\def\D{\mathrm{D}}

\def\sR{{\mathbb R}}
\def\sS{{\mathbb S}}

\def\gP{{\mathcal{P}}}

\def\gX{{\mathcal{X}}}

\def\gS{{\mathcal{S}}}

\def\gT{{\mathcal{T}}}
\def\gE{{\mathcal{E}}}
\def\gR{{\mathcal{R}}}
\def\gG{{\mathcal{G}}}

\def\id{{\mathrm{id}}}

\def\expm{{\mathrm{expm}}}
\def\logm{{\mathrm{logm}}}
\def\grad{{\mathrm{grad}}}
\def\hess{{\mathrm{Hess}}}

\def\Exp{{\mathrm{Exp}}}

\def\Riem{{\mathrm{Riem}}}
\def\Retr{{\mathrm{Retr}}}

\newcommand{\trace}{\mathrm{tr}}

\newtheorem{theorem}{Theorem}
\newtheorem{lemma}{Lemma}
\newtheorem{proposition}{Proposition}

\theoremstyle{definition}
\newtheorem{definition}{Definition}
\newtheorem{assumption}{Assumption}
\newtheorem{remark}{Remark}

\DeclareMathOperator*{\argmin}{arg\,min}


\makeatletter
\newcommand{\thickhline}{%
    \noalign {\ifnum 0=`}\fi \hrule height 1pt
    \futurelet \reserved@a \@xhline
}
\newcolumntype{"}{@{\hskip\tabcolsep\vrule width 1pt\hskip\tabcolsep}}
\makeatother


\title{\textbf{Riemannian accelerated gradient methods \\ via extrapolation}}

\author{Andi Han\footnote{University of Sydney 
  (\texttt{andi.han@sydney.edu.au}, \texttt{junbin.gao@sydney.edu.au}).}
\and Bamdev Mishra\footnote{Microsoft India.
  (\texttt{bamdevm@microsoft.com}, \texttt{pratik.jawanpuria@microsoft.com}).}
 \and Pratik Jawanpuria\footnotemark[2] 
\and Junbin Gao\footnotemark[1]}
\date{}

\usepackage{graphicx}

\begin{document}

\maketitle

\begin{abstract}
    In this paper, we propose a simple acceleration scheme for Riemannian gradient methods by extrapolating iterates on manifolds. We show when the iterates are generated from Riemannian gradient descent method, the accelerated scheme achieves the optimal convergence rate asymptotically and is computationally more favorable than the recently proposed Riemannian Nesterov accelerated gradient methods. Our experiments verify the practical benefit of the novel acceleration strategy. 
\end{abstract}

\section{Introduction}
In this paper, we consider the following optimization problem 
\begin{equation}
    \min_{x \in \M} f(x), \label{riem_opt_main}
\end{equation}
where $\M$ is a Riemannian manifold and $f: \M \xrightarrow{} \sR$ is a smooth, real-valued function. Optimization on a Riemannian manifold naturally appears in various fields of applications, including principal component analysis \cite{edelman1998geometry,zhang2016riemannian}, matrix completion and factorization \cite{keshavan2009gradient,vandereycken2013low,boumal2015low}, dictionary learning \cite{cherian2016riemannian,harandi2013dictionary}, optimal transport \cite{shi2021coupling,mishra2021manifold,han2022riemannian}, to name a few. Riemannian optimization \cite{absil2009optimization,boumal2020introduction} provides a universal and efficient framework for problem \eqref{riem_opt_main} that respects the intrinsic geometry of the constraint set. In addition, many non-convex problems turns out to be geodesic convex (a generalized notion of convexity) on the manifold, which yields better convergence guarantees for Riemannian optimization methods. 

One of the most fundamental solvers is the Riemannian gradient descent method \cite{udriste2013convex,zhang2016first,absil2009optimization,boumal2020introduction}, which generalizes the classical gradient descent method in the Euclidean space with intrinsic updates on manifolds. There also exist various advanced algorithms for Riemannian optimization that include stochastic and variance reduced methods \cite{bonnabel2013stochastic,zhang2016riemannian,kasai2018riemannian,han2021improved,hanmomentum2021}, adaptive gradient methods \cite{becigneul2018riemannian,kasai2019riemannian} quasi-Newton methods \cite{huang2015broyden,qi2010riemannian}, trust region methods \cite{absil2007trust}, and cubic regularized Newton methods \cite{agarwal2021adaptive}, among others.

Nevertheless, it remains unclear whether there exists a simple strategy to accelerate first-order algorithms on Riemannian manifolds.
Existing research on accelerated gradient methods focus primarily on generalizing Nesterov acceleration \cite{nesterov1983method} to Riemannian manifolds, including \cite{liu2017accelerated,ahn2020nesterov,zhang2018estimate,alimisis2020continuous,jin2022understanding,kim2022accelerated}. However, most of the algorithms are theoretic constructs and are usually less favourable in practice. This is due to the excessive use of exponential map, inverse exponential map, and parallel transport (formally defined in Section \ref{prelim_sect}) each iteration. In addition, the Nesterov acceleration based methods require the knowledge of smoothness and strong convexity constants, which are often unknown. Furthermore, recent studies \cite{hamilton2021no,criscitiello2022negative} show global acceleration cannot be achieved on manifolds in general, thus further questioning the utility of Nesterov type of acceleration for Riemannian optimization.

In this paper, we undertake a different approach for Riemannian acceleration compared to the aforementioned studies. The main idea is based on extrapolation, which aims to combine a (converging) sequence of iterates on manifolds to produce an accelerated sequence. We highlight that the iterate sequence can be given by any first-order solver, not necessarily the Riemannian gradient descent. In the Euclidean space, such an idea dates back to Aitken's $\Delta^2$ process \cite{aitken_1927} and Shanks sequence transformation \cite{shanks1955non,brezinski2018shanks}, which are collectively referred to as nonlinear acceleration. Similar methods are also known as $\epsilon$-algorithm \cite{wynn1956device}, minimal polynomial extrapolation \cite{sidi1986acceleration} and Anderson acceleration \cite{walker2011anderson}, just to name a few. The core idea of these methods is to compute extrapolation as a linear combination of the iterates where the weights depend nonlinearly on the iterates. 

Particularly, we focus on the formulation in \cite{scieur2020regularized}, which introduces regularization to nonlinear acceleration that shows better stability when optimizing non-quadratic functions. Given a sequence of iterates $x_0, ..., x_k$, \cite{scieur2020regularized} produces extrapolation $\sum_{i=0}^k c_i x_i$ where the weights $\{c_i\}$ minimize the regularized combination of residuals, i.e., $\| \sum_{i=0}^k c_i (x_{i+1} - x_i) \|_2^2 + \lambda \sum_{i=0}^k c_i^2$. It has been shown such a scheme achieves optimal convergence rate asymptotically without knowing the function-specific constants. 

A natural question is \textit{whether such simple idea of extrapolation can be generalized to Riemannian optimization so that we achieve acceleration on manifolds both theoretically and practically}. In this paper, we answer this question affirmatively. In particular, our contributions are as follows. 
\begin{itemize}
    \item We propose a simple acceleration strategy for Riemannian optimization based on the idea of extrapolation, which we call the Riemannian nonlinear acceleration (RiemNA) strategy. To extrapolate on manifolds, we develop several averaging schemes, which generalize weighted averaging in the Euclidean space from various perspectives.
    
    \item When the iterates are generated by the Riemannian gradient descent method, we show RiemNA leads to the same optimal convergence rate as the Riemannian Nesterov gradient methods \cite{liu2017accelerated,zhang2018estimate,ahn2020nesterov,kim2022accelerated} asymptotically, while being more computationally efficient. We also show the convergence is robust to the choice of different averaging schemes on manifolds.
    

    
    \item Different to most analysis for Riemannian accelerated gradient methods \cite{liu2017accelerated,zhang2018estimate,ahn2020nesterov,kim2022accelerated} that focus on exponential map and parallel transport, we show the use of general retraction and vector transport achieves the same convergence guarantees. 
    
    \item Various experiments demonstrate the superiority of RiemNA compared to both Riemannian gradient descent and Riemannian Nesterov accelerated gradient methods.
\end{itemize}

\section{Related work on Riemannian acceleration}

The first paper that generalizes the Nesterov acceleration strategy \cite{nesterov1983method} from the Euclidean space to the Riemannian manifold space is \cite{liu2017accelerated}. The algorithm however requires to compute the exact solution of a nonlinear equation each iteration and is usually infeasible in practice. In \cite{zhang2018estimate}, a tractable algorithm for Riemannian Nesterov acceleration, called RAGD, is proposed and is proved to achieve local acceleration around the optimal solution. Furthermore, \cite{ahn2020nesterov} shows global convergence and eventual acceleration of RAGD with carefully chosen parameters. Recently, \cite{kim2022accelerated} develops practical algorithms that achieve acceleration for both geodesic convex and strongly convex functions by assuming a bounded domain and curvature. Finally, \cite{jin2022understanding} proposes a general framework for analyzing acceleration on manifolds where RAGD is a special case.

Another direction of acceleration research on manifolds is inspired by the continuous dynamics formulation of Nesterov acceleration in the Euclidean space \cite{su2014differential,wibisono2016variational}. In \cite{alimisis2020continuous}, an ordinary differential equation (ODE) is introduced to model Nesterov acceleration on manifolds, and later \cite{duruisseaux2022variational} provides a variational framework for understanding Riemannian acceleration. Although the ODEs achieve acceleration, discretization of the continuous processes does not necessarily preserves the same accelerated rate of convergence. Lastly, there also exist acceleration results for specific manifolds, including sphere and hyperbolic manifolds \cite{martinez2022global}, Stiefel manifold \cite{siegel2019accelerated} and for nonconvex optimization \cite{criscitiello2022accelerated}. For more related works on Riemannian acceleration, we refer to \cite[Appendix A]{criscitiello2022negative}.

\section{Preliminaries}
\label{prelim_sect}

\paragraph{Basic Riemannian geometry.} A Riemannian manifold $\M$ is a smooth manifold endowed with a smooth inner product structure, namely a Riemannian metric on the tangent space $T_x\M$, for all $x \in \M$. The Riemannian inner product between any $u, v \in T_x\M$ is written as $\langle u,v \rangle_x$ and the induced norm of a tangent vector $u$ is $\| u \|_x = \sqrt{\langle u,u \rangle_x}$. A `straight' line on manifold is called a geodesic $\gamma: [0,1] \xrightarrow{} \M$, which is a locally distance minimizing curve with zero acceleration. Riemannian distance between $x, y \in \M$ is $d(x,y) = \inf_{\gamma} \int_{0}^1 \| \gamma'(t) \|_{\gamma(t)} dt$ where $\gamma(0) =x$, $\gamma(1) = y$. Exponential map ${\rm Exp}_{x}: T_x\M \xrightarrow[]{} \M$ maps a tangent vector $u \in T_x\M$ to $\gamma(1)$ with $\gamma(0) = x, \gamma'(0) = u$. If between $x,y \in\M$, there exists a unique geodesic connecting them, the exponential map has an inverse ${\rm Exp}_x^{-1}(y)$ and the distance can be computed as $d(x,y) = \| {\rm Exp}_x^{-1}(y) \|_x = \| {\rm Exp}_y^{-1}(x) \|_y$. In this work, we only consider a unique-geodesic subset $\gX$ of the manifold, which we explicitly assume in Section \ref{sect_convergence_rimena}.
Parallel transport $\Gamma_{x}^y: T_x\M \xrightarrow{} T_y\M$ allows tangent vectors to be transported along a geodesic that connects $x$ to $y$ such that the induced vector fields in \textit{parallel}. It is known that parallel transport is isometric, i.e., $\langle \Gamma_{x}^y u, \Gamma_{x}^y v \rangle_y = \langle u,v \rangle_x$ for any $u, v \in T_x\M$. 

In this paper, we also consider the more general retraction and vector transport that include exponential map and parallel transport as special cases. A retraction, $\Retr_x: T_x\M \xrightarrow{} \M$ is a first-order approximation to the exponential map and a vector transport $\gT_{x}^y: T_x\M \xrightarrow{} T_y \M$ is a linear map between tangent spaces that approximates the parallel transport.

\paragraph{Function classes on Riemannian manifolds.}
For a differentiable, real-valued function $f: \M \xrightarrow{} \sR$, its Riemannian gradient at $x$, $\grad f(x) \in T_x\M$ is the unique tangent vector that satisfies $\langle \grad f(x), u \rangle_{x} = \D f(x) [u] = \langle \nabla f(x),u \rangle_{2}$, where $\D f(x)[u]$ represents the directional derivative of $f$ along $u$ and $\nabla f(x)$ is the Euclidean gradient and $\langle \cdot, \cdot\rangle_{2}$ represents the Euclidean inner product. The Riemannian Hessian at $x$, $\hess f(x): T_x\M \xrightarrow{} T_x\M$ is a self-adjoint operator, defined as the covariant derivative of the Riemannian gradient.   On Riemannian manifolds, one can extend the notion of gradient and Hessian Lipschitzness in the Euclidean space to \textit{geodesic Lipschitz gradient and Hessian} associated with the exponential map and parallel transport. Some equivalent characterizations, including function smoothness and bounded Hessian norm also exist for the Riemannian counterparts. Furthermore, the notion of convexity can be similarly generalized to \textit{geodesic convexity} where the convex combination is defined along the geodesics. Similar notions are also properly defined with respect to general retractions. The formal definitions are deferred to Appendix \ref{function_class_appendix}. See also \cite{boumal2020introduction} for a more thorough treatment.


\paragraph{Metric distortion.}
Due to the curved geometry of Riemannian manifolds, many of the metric properties in the linear space are lost. To perform convergence analysis, we require the following geometric lemmas on manifolds that provide bounds on the metric distortion.

\begin{lemma}[\cite{ahn2020nesterov,sun2019escaping}]
\label{lemma_move_vectorsum}
Consider a compact subset $\gX \subseteq \M$ with unique geodesic.
Let $x, y = \Exp_{x}(u) \in \gX$ for some $u \in T_x\M$. Then for any $v \in T_x\M$, we have $d( \Exp_{x}(u + v), \Exp_{y}(\Gamma_{x}^y v) ) \leq \min\{ \| u\|, \|v \| \} C_\kappa(\| u \| + \| v \|)$,
where $\gX$ has curvature upper bounded by $\kappa$ in magnitude and $C_\kappa(r) \coloneqq \cosh(\sqrt{\kappa}r ) - {\sinh(\sqrt{\kappa} r)}/({\sqrt{\kappa} r})$.
\end{lemma}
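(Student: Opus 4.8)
The plan is to realize the two points as the endpoints of an explicit curve, bound the distance by that curve's length, and control the length through a Jacobi field comparison against a constant-curvature model. The guiding observation is that the statement is vacuous in flat space: there $\Exp_x(u+v)$ and $\Exp_y(\Gamma_x^y v)$ both equal $x+u+v$, so the distance is a pure curvature effect. Indeed $C_\kappa(r) = \tfrac{1}{3}\kappa r^2 + O(r^4)$ vanishes to second order, which signals that the correct tool is a second-order (Jacobi field / Rauch-type) estimate rather than a crude first-order bound.

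Concretely, I would introduce the geodesic $\gamma(s) = \Exp_x(su)$ joining $x$ to $y$ and the interpolating curve
\[
  F(s) = \Exp_{\gamma(s)}\big(\Gamma_x^{\gamma(s)}((1-s)u + v)\big), \qquad s\in[0,1],
\]
which satisfies $F(0) = \Exp_x(u+v)$ and $F(1) = \Exp_y(\Gamma_x^y v)$, so that the target distance is at most $\int_0^1 \|F'(s)\|\,ds$. For each fixed $s$ the map $t \mapsto \Exp_{\gamma(s)}\big(t\,\Gamma_x^{\gamma(s)}((1-s)u+v)\big)$ is a geodesic $\sigma_s$ of speed $\|(1-s)u+v\| \le \|u\|+\|v\|$, and $F'(s)$ is the endpoint value $J_s(1)$ of the Jacobi field $J_s$ generated by the $s$-variation. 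A short computation of the endpoint data gives $J_s(0) = \gamma'(s)$ and, for the covariant derivative, $J_s'(0) = -\gamma'(s)$, both of norm $\|u\|$ since $\gamma'$ is the parallel transport of $u$; the corresponding zero-curvature Jacobi field, namely $(1-t)$ times the transported $\gamma'(s)$, vanishes at $t=1$.

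The core step is then to bound $\|J_s(1)\|$ as a pure curvature deviation from this vanishing flat value. Splitting $J_s$ into parts parallel and perpendicular to $\sigma_s'$, the parallel part evolves linearly and already reproduces the vanishing flat solution at $t=1$, while the perpendicular part has initial norm at most $\|u\|$ with covariant derivative anti-parallel to it, and obeys the Jacobi equation with $|\mathrm{sec}| \le \kappa$. Comparing with the space form of curvature $\kappa$, where the Jacobi field is an explicit combination of $\cosh(\sqrt\kappa\,\cdot)$ and $\sinh(\sqrt\kappa\,\cdot)$, yields exactly $\|J_s(1)\| \le \|u\|\,C_\kappa(\|(1-s)u+v\|) \le \|u\|\,C_\kappa(\|u\|+\|v\|)$ since $C_\kappa$ is increasing; integrating over $s$ gives $d \le \|u\|\,C_\kappa(\|u\|+\|v\|)$. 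The companion bound with $\|v\|$ follows from the symmetric estimate: reparametrizing by the length of $v$ (both points coincide at $y$ when $v=0$) reduces the leading behavior to the standard comparison between the differential $\D\Exp_x|_u$ and parallel transport $\Gamma_x^y$, again a Jacobi field with vanishing initial value whose distortion scales with $\|v\|$. Taking the minimum of the two bounds gives the claim.

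The main obstacle I anticipate is extracting the \emph{sharp} constant $C_\kappa$ rather than a looser exponential (Gr\"onwall-type) bound. Because $J_s(0)$ and $J_s'(0)$ are anti-parallel, the flat solution is the decreasing field that vanishes at the endpoint, so the naive norm bound $\|J(0)\|\cosh + \|J'(0)\|\sinh/\sqrt\kappa$ is too lossy: it carries the wrong sign and fails to vanish. The sharp $\cosh - \sinh/(\sqrt\kappa\,r)$ combination appears only when the comparison is run against the exact space-form solution with matching anti-parallel data. Making this comparison rigorous, uniform in $s$ so it survives integration, together with the parallel/perpendicular splitting and the symmetric $\|v\|$ estimate, is where the real work lies; the reduction to a curve-length integral and the identification of $F'(s)$ as a Jacobi field are routine.
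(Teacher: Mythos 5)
This lemma is imported by the paper from \cite{ahn2020nesterov,sun2019escaping} and is not proved in the text, so the only comparison available is with the Jacobi-field arguments in those references --- and your strategy is exactly that one. The half of the bound you treat carefully is sound: the variation $F(s)=\Exp_{\gamma(s)}\bigl(\Gamma_x^{\gamma(s)}((1-s)u+v)\bigr)$ has the right endpoints, $F'(s)$ is the endpoint value of a Jacobi field with data $J_s(0)=\gamma'(s)$, $J_s'(0)=-\gamma'(s)$ (by symmetry of the connection), the tangential component is affine and vanishes at $t=1$, and the normal component, compared against the flat solution $(1-t)$ times the parallel translate of the data via the forced scalar ODE $h''=\kappa h+\kappa(1-t)\|u\|$ with $h(0)=h'(0)=0$, gives precisely $\|J_s(1)\|\le\|u\|\,C_\kappa(\|(1-s)u+v\|)\le\|u\|\,C_\kappa(\|u\|+\|v\|)$. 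You correctly identify that the naive $\cosh+\sinh$ bound on the data is lossy and that the sharp constant requires exploiting the anti-parallel structure; that is indeed the crux, and it is Karcher's Jacobi-field estimate.

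The genuine gap is the $\|v\|$ half of the $\min$. It does not follow by ``reparametrizing'' the same variation: the quantity is not symmetric under $u\leftrightarrow v$ (swapping them changes the base point $y$), so a second, distinct variation is needed --- e.g.\ one whose foot curve is $s\mapsto\Exp_x(u+sv)$ with geodesic fibers $t\mapsto\Exp_{c(s)}\bigl(t(1-s)\Gamma_x^{c(s)}v\bigr)$. There the Jacobi data are only \emph{approximately} anti-parallel: $J_s(0)=\D\Exp_x|_{u+sv}[v]$ differs from $\Gamma_x^{c(s)}v$ by $O(\kappa(\|u\|+\|v\|)^2\|v\|)$, and $J_s'(0)$ picks up the derivative of parallel transport along a non-radial curve, another curvature term of the same order. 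These corrections are of the \emph{same} order as the target $\|v\|C_\kappa(\|u\|+\|v\|)$, so the reduction you propose to ``the standard comparison between $\D\Exp_x|_u$ and $\Gamma_x^y$'' controls the order of magnitude but does not obviously reproduce the stated constant $C_\kappa$; one must track these extra terms (or use the first variation of arc length between the two curves emanating from $y$, which additionally introduces holonomy corrections). This half needs to be written out before the proof is complete; as it stands only $d\le\|u\|\,C_\kappa(\|u\|+\|v\|)$ is established.
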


\begin{lemma}[\cite{ahn2020nesterov,karcher1977riemannian,mangoubi2018rapid,sun2019escaping}]
\label{lemma_metric_distort}
For a compact subset $\gX \subseteq \M$ with unique geodesic, there exists constants $C_0 > 0$, $C_1, C_2 \geq 1$ that depend on the curvature and diameter of $\gX$ such that for all $x,y,z \in \gX$, $u \in T_x\M$ we have 
\begin{enumerate}[(1).]
    \item $\| \Gamma_{y}^z \Gamma_{x}^y u - \Gamma_{x}^z u \|_{z} \leq C_0 d(x,y) d(y,z) \| u\|_x$ 
    
    \item $C_1^{-1} d(x,y) \leq \| \Exp_z^{-1}(x) - \Exp_z^{-1}(y) \|_{z} \leq C_2 d(x,y)$ 
    
    \item $d\big(\Exp_{x}(u), \Exp_y (\Gamma_{x}^y u) \big) \leq C_3 d(x,y)$.
\end{enumerate}
\end{lemma}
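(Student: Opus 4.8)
The plan is to treat all three inequalities as instances of curvature-controlled metric distortion and to prove them through Jacobi field comparison on the compact, unique-geodesic set $\gX$, where the sectional curvature is bounded in magnitude by some $\kappa$ and the diameter is bounded. The unifying technical input is the Rauch comparison theorem, which yields two-sided bounds on $\|d(\Exp_z)_v\|$ (and hence on $\|d(\Exp_z^{-1})\|$) that are uniform over $z \in \gX$ and over $v$ in the relevant compact ball of the tangent space. I would fix these uniform constants first and then read off each of (1)--(3) from them.

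For (2), I would prove the two inequalities by exploiting that $\Exp_z^{-1}$ and $\Exp_z$ are mutually inverse bi-Lipschitz maps. For the upper bound, write $\Exp_z^{-1}(x) - \Exp_z^{-1}(y) = \int_0^1 d(\Exp_z^{-1})_{\gamma(t)}[\gamma'(t)]\, dt$ along the minimizing geodesic $\gamma$ from $x$ to $y$; bounding $\|d(\Exp_z^{-1})\|$ uniformly by $C_2$ and using $\int_0^1 \|\gamma'(t)\|\,dt = d(x,y)$ gives the claim. For the lower bound the cleanest route is to go the other way: set $a = \Exp_z^{-1}(x)$, $b = \Exp_z^{-1}(y)$ so that $x = \Exp_z(a)$, $y = \Exp_z(b)$, and apply the Lipschitz bound on $\Exp_z$ to get $d(x,y) \le C_1 \|a - b\|_z$, which rearranges to $C_1^{-1} d(x,y) \le \|\Exp_z^{-1}(x) - \Exp_z^{-1}(y)\|_z$.

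For (3), I would build the geodesic variation $\gamma_s(t) = \Exp_{\sigma(s)}(t\, \Gamma_x^{\sigma(s)} u)$, where $\sigma$ is the minimizing geodesic from $x$ to $y$ and the initial velocities are parallel transports of $u$ along $\sigma$. The variation field $J_s(t) = \partial_s \gamma_s(t)$ is a Jacobi field in $t$ with initial data $J_s(0) = \sigma'(s)$ and $\tfrac{\D}{dt}J_s(0) = 0$ (the second condition because the initial velocity field $s \mapsto \Gamma_x^{\sigma(s)}u$ is parallel, combined with symmetry of the connection). Rauch comparison then yields $\|J_s(1)\| \le C_3 \|J_s(0)\| = C_3\,\|\sigma'(s)\| = C_3\, d(x,y)$, and integrating $d(\Exp_x(u), \Exp_y(\Gamma_x^y u)) \le \int_0^1 \|J_s(1)\|\, ds$ gives (3).

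For (1), the discrepancy $\Gamma_y^z \Gamma_x^y u - \Gamma_x^z u$ is exactly (after pulling back by the isometry $\Gamma_x^z$) the holonomy defect of $u$ around the geodesic triangle $x \to y \to z \to x$, so I would bound it by the curvature integrated over a surface spanning the triangle, $\|\Gamma_y^z\Gamma_x^y u - \Gamma_x^z u\|_z \le \big(\sup_{\gX}\|\R\|\big)\,\mathrm{Area}(\triangle xyz)\,\|u\|_x$, and then use $\mathrm{Area}(\triangle xyz) \le \tfrac12 d(x,y)\, d(y,z)$ (valid up to a curvature-dependent constant absorbed into $C_0$) to conclude. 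I expect this last part to be the main obstacle: making the holonomy-equals-curvature-flux statement quantitative with explicit, uniform constants requires a careful second-variation estimate together with a uniform bound on the area of geodesic triangles in terms of two adjacent side lengths, and it is precisely here that the curvature and diameter of $\gX$ genuinely enter. The two-sided estimate in (2) is a close second in subtlety, since its lower direction cannot be obtained by differentiating $\Exp_z^{-1}$ directly and instead needs the inverse-map argument above.
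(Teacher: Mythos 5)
The paper does not prove Lemma~\ref{lemma_metric_distort}: it is imported verbatim from the cited references (Karcher, Ahn--Sra, Sun et al.) and used as a black box throughout the appendix, so there is no in-paper argument to compare yours against. Judged on its own terms, your treatment of parts (2) and (3) is correct and complete in outline, and matches the standard proofs in those sources: the two-sided bound in (2) follows exactly as you say from uniform Rauch bounds on $\|\D\Exp_z\|$ over the compact ball of radius $\mathrm{diam}(\gX)$, with the lower bound obtained by Lipschitz-bounding $\Exp_z$ rather than $\Exp_z^{-1}$; and for (3) your identification of the variation field as a Jacobi field with $J_s(0)=\sigma'(s)$ and $\tfrac{\D}{dt}J_s(0)=0$ (via parallelism of the initial velocities and torsion-freeness) is exactly right, after which Rauch and integration in $s$ finish it. One cosmetic caveat: the constant in (3) necessarily depends on $\|u\|_x$, so the restriction to $u$ with $\Exp_x(u)\in\gX$ (which you make implicitly) is needed for uniformity.

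Part (1) is where your proposal has a genuine gap, and it is larger than your hedging suggests. The difficulty is not merely ``making the constants explicit'': the naive version of your holonomy argument does not produce the product $d(x,y)\,d(y,z)$ at all. Concretely, if you foliate the triangle by the geodesics $\alpha_s$ from $\sigma(s)$ (a point on the side $xy$) to $z$, differentiate $s\mapsto \Gamma_{\sigma(s)}^{z}\Gamma_x^{\sigma(s)}u$, and bound the resulting curvature integrand by $\|\R\|\,\|\partial_s\alpha\|\,\|\partial_t\alpha\|$, you obtain an estimate of order $d(x,y)\big(d(x,y)+d(y,z)\big)\|u\|_x$, which is \emph{not} $O(d(x,y)\,d(y,z))$ when $d(y,z)\ll d(x,y)$ (a thin triangle with small enclosed area but long sides). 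Recovering the product of the two adjacent sides requires the analogue of the $\sin\theta$ factor in the Euclidean area formula: one must observe that $\R(\partial_s\alpha,\partial_t\alpha)$ only sees the component of the transverse Jacobi field normal to the ruling geodesic, and then bound that normal component of $\sigma'(s)$ by $\|\sigma'(s)\|\cdot d(y,z)/d(\sigma(s),z)$ via a comparison law of sines. Equivalently, your appeal to ``$\mathrm{Area}(\triangle xyz)\le \tfrac12 d(x,y)\,d(y,z)$ up to a curvature constant'' is true but is itself the nontrivial statement, and asserting ``holonomy defect $\le \sup\|\R\|\cdot\mathrm{Area}$'' in operator norm already presupposes the careful normal-component computation. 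As written, the proposal names the right ingredients for (1) but defers exactly the step that carries the content of the inequality, so (1) should be counted as unproven.
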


\section{Riemannian nonlinear acceleration}
\label{riemna_main_sect}

We consider a generalization of nonlinear acceleration for Riemannian optimization via a weighted Riemannian average on the manifold. Specifically, for a set of weights $\{ c_i\}_{i=0}^k$ and a set of points $\{x_i \}_{i=0}^k$, we define the weighted Riemannian average $\bar{x}_{c, x}$ as computed according to the following recursion:
\begin{equation}
    \bar{x}_{c, x} = \tilde{x}_k, \qquad \tilde{x}_{i} = \Exp_{\tilde{x}_{i-1}} \Big(  \frac{c_i}{\sum_{j=0}^i c_j} \Exp^{-1}_{\tilde{x}_{i-1}} \big( x_i  \big) \Big), \label{mfd_recursive_average} \tag{Avg.1}
\end{equation}
with $\tilde{x}_{-1} = x_0$, for $i = 0,...,k$. In the Euclidean space, which is a special case of Riemannian manifold, \eqref{mfd_recursive_average} recovers the weighted mean as $\bar{x}_{c,x} = \sum_{i=0}^k c_i x_i$ (see Lemma \ref{waverage_recursion_lemma} in Appendix \ref{euclidean_average_appendix} for more details). The coefficients $\{c_i\}_{i=0}^k$ are determined by minimizing a weighted combination of the residuals $\Exp^{-1}_{x_i}(x_{i+1}) \in T_{x_i}\M, i = 0,..., k$. Given the residuals lie on different tangent spaces, we use parallel transport $\Gamma_{x_i}^{x_k}$ to map them to a fixed tangent space, i.e., $r_i = \Gamma_{x_i}^{x_k} \Exp^{-1}_{x_i}(x_{i+1}) \in T_{x_k}\M$. Following the work \cite{scieur2020regularized}, we also consider a regularization in the coefficients, which leads to the following optimization problem:
\begin{equation}
    \min_{c\in \sR^{k+1}: c^\top 1 = 1} \| \sum_{i=0}^k c_i r_i \|^2_{x_k} + \lambda \|c \|_2^2. \label{coeff_problem_main}
\end{equation}
We show in Proposition \ref{prop_cstar_derivation} that the optimal coefficients $c^*$ has a closed-form solution.


\begin{proposition}
\label{prop_cstar_derivation}
Let $R = [\langle r_i,  r_j\rangle_{x_k}]_{i,j} \in \sR^{(k+1) \times (k+1)}$ collects all pairwise inner products. Then
the solution $c^* = \argmin_{c \in \sR^{k+1} : c^\top 1 = 1} \| \sum_{i=0}^k c_i r_i \|^2_{x_k} + \lambda  \| c \|^2_2$ is explicitly derived as $c^* = \frac{(R + \lambda  I)^{-1} 1}{1^\top (R + \lambda  I)^{-1} 1}$.
\end{proposition}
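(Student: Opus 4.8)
The plan is to solve this as an equality-constrained quadratic program using Lagrange multipliers. First I would rewrite the objective in matrix form. Observe that $\|\sum_{i=0}^k c_i r_i\|_{x_k}^2 = \sum_{i,j} c_i c_j \langle r_i, r_j\rangle_{x_k} = c^\top R c$, since $R$ collects exactly these pairwise inner products and the Riemannian inner product on the fixed tangent space $T_{x_k}\M$ is bilinear. Combined with the regularizer $\lambda\|c\|_2^2 = \lambda\, c^\top c$, the problem becomes
\begin{equation*}
    \min_{c \in \sR^{k+1}} \; c^\top (R + \lambda I) c \qquad \text{subject to } c^\top 1 = 1.
\end{equation*}

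Next I would form the Lagrangian $\mathcal{L}(c,\mu) = c^\top(R+\lambda I)c - \mu(c^\top 1 - 1)$ and set its gradient in $c$ to zero. This gives the stationarity condition $2(R+\lambda I)c = \mu 1$, hence $c = \tfrac{\mu}{2}(R+\lambda I)^{-1}1$. The matrix $R+\lambda I$ is invertible because $R$ is a Gram matrix and therefore positive semidefinite, so adding $\lambda I$ with $\lambda > 0$ makes it positive definite; this also certifies that the stationary point is the global minimizer of the (strictly) convex objective. I would then substitute this expression back into the constraint $c^\top 1 = 1$ to pin down the multiplier: $\tfrac{\mu}{2}\, 1^\top(R+\lambda I)^{-1}1 = 1$, which yields $\tfrac{\mu}{2} = 1/\big(1^\top(R+\lambda I)^{-1}1\big)$. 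Plugging this scalar back into the formula for $c$ gives exactly $c^* = \frac{(R+\lambda I)^{-1}1}{1^\top(R+\lambda I)^{-1}1}$.

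There is essentially no hard obstacle here; the statement is a standard closed-form solution for a norm-minimization-on-an-affine-subspace problem. The only points requiring a small amount of care are first justifying that $R$ is positive semidefinite so that $R+\lambda I$ is invertible and the denominator $1^\top(R+\lambda I)^{-1}1$ is nonzero (indeed strictly positive, since $(R+\lambda I)^{-1}$ is positive definite), and second noting that because the objective is strictly convex and the constraint is affine, the KKT stationary point is necessarily the unique global minimizer rather than merely a critical point. Both are immediate from the Gram-matrix structure of $R$ and the assumption $\lambda > 0$.
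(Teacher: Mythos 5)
Your proof is correct and follows essentially the same route as the paper, which writes down the KKT system for the equality-constrained quadratic program and solves it; your version simply spells out the Lagrangian, the invertibility of $R+\lambda I$ via the Gram-matrix structure, and the strict-convexity argument that the paper leaves implicit.
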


The Riemannian nonlinear acceleration (RiemNA) strategy is presented in Algorithm \ref{Rna_algorithm}, which takes a sequence of non-diverging iterates from any solver as input and constructs an extrapolation using weights that solve \eqref{coeff_problem_main}. The extrapolation is performed in parallel to the update of the iterate sequence. It is worth mentioning that when the manifold is the Euclidean space, Algorithm \ref{Rna_algorithm} exactly recovers the nonlinear acceleration algorithm in \cite{scieur2020regularized}.

\begin{algorithm}[t]
 \caption{Riemannian nonlinear acceleration (RiemNA)}
 \label{Rna_algorithm}
 \begin{algorithmic}[1]
  \STATE \textbf{Input:} A sequence of iterates $x_0, ..., x_{k+1}$. Regularization parameter $\lambda$.
  \STATE Compute $r_i  = \Gamma_{x_i}^{x_k} {\rm Exp}_{x_i}^{-1}(x_{i+1}) \in T_{x_k}\M, i = 0,...,k$
  \STATE Solve $c^* = \argmin_{c \in \sR^{k+1} : c^\top 1 = 1} \| \sum_{i=0}^k c_i r_i \|^2_{x_k} + \lambda  \| c \|_2^2$
  \STATE \textbf{Output:} $\bar{x}_{c^*,x} = \tilde{x}_k$ computed from $\tilde{x}_{i} = \Exp_{\tilde{x}_{i-1}} \Big(  \frac{c^*_i}{\sum_{j=0}^i c^*_j} \Exp^{-1}_{\tilde{x}_{i-1}} \big( x_i  \big) \Big)$, with $\tilde{x}_{-1}= x_0$. 
 \end{algorithmic} 
\end{algorithm}

\paragraph{Computational comparison to Riemannian Nesterov acceleration.}

Based on Algorithm \ref{Rna_algorithm}, each extrapolation step requires evaluation of $2k$ times the inverse exponential map, $k$ times the parallel transport and exponential map as well as solving a linear system in dimension $k+1$. Nevertheless, in practice, we often consider a limited-memory version of Algorithm \ref{Rna_algorithm} where only the most recent $m$ iterates are used and the extrapolation only happens every $m$ iterations (see more details in Section \ref{experiment_sect}). In addition, if the iterates are given by Riemannian gradient descent, we only require evaluation of parallel transport, exponential map and inverse exponential map once per iteration on average. 
This is more efficient than even the most practical implementation of the Riemannian Nesterov accelerated gradient methods \cite{zhang2018estimate,kim2022accelerated} (with algorithms given in Appendix \ref{appendix_rnag}), which require at least twice the number of exponential and inverse exponential map per iteration. Such claims are also verified empirically in Section \ref{experiment_sect}.


\section{Convergence acceleration for Riemannian gradient descent}
\label{sect_convergence_rimena}

This section analyzes the convergence acceleration of RiemNA (Algorithm \ref{Rna_algorithm}) for the iterates generated by the Riemannian gradient descent (RGD) method \cite{absil2009optimization,boumal2020introduction}. In particular, we show that the extrapolated point (output of Algorithm \ref{Rna_algorithm}) eventually becomes a good estimate of the optimal point with more iterations. We start by making the following assumption throughout the paper.
\begin{assumption}
\label{assump_normal_nei}
Let $x^* \in \mathcal{M}$ be a local minimizer of $f$. The iterates generated, i.e., $x_0, x_1 ,...$ stay within a neighbourhood $\gX$ around $x^*$ with unique geodesic. Furthermore, the sequence of iterates is non-divergent, i.e., $d(x_k, x^*) = O(d(x_0, x^*))$ for all $k \geq 0$. 
\end{assumption}
The former condition in Assumption \ref{assump_normal_nei} ensures the exponential map is invertible and is standard for analyzing accelerated gradient methods on manifolds \cite{ahn2020nesterov,jin2022understanding,kim2022accelerated}. This condition is satisfied for any non-positively curved manifolds, such as symmetric positive definite (SPD) manifold with the affine-invariant metric \cite{bhatia2009positive}. In addition, this also holds true for any sufficiently small subset $\gX$ of any manifold. 

\paragraph{Linear iterates and error decomposition in the Euclidean space.} 
First, we recall that the convergence analysis for the Euclidean nonlinear acceleration \cite{scieur2020regularized} relies critically on a sequence of linear fixed-point iterates that satisfy $\hat{x}_{i} - x^* = G (\hat{x}_{i-1} - x^*)$ for some positive semi-definite and contractive matrix $G$. The main idea is to show the algorithm converges optimally on $\hat{x}_i$ and then bound the deviation arising from the nonlinearity. Particularly, let $\{x_i\}_{i=0}^k$ be the given iterates and $\{\hat{x}_i\}_{i=0}^k$ be the linear iterates. Consider $c^*, \hat{c}^*$ as the coefficients solving \eqref{coeff_problem_main} in the Euclidean setup using $\{x_i\}_{i=0}^k$, $\{\hat{x}_i\}_{i=0}^k$ respectively. The convergence analysis in \cite{scieur2020regularized} aims to bound each term from the following error decomposition:
\begin{equation*}
     \sum_{i=0}^k c_i^* x_i - x^* = \underbrace{\sum_{i=0}^k  \hat{c}^*_i \hat{x}_i - x^* }_{\rm Linear \, term} + \underbrace{\sum_{i=0}^k ( c^*_i - \hat{c}^*_i ) \hat{x}_i}_{\rm Stability} + \underbrace{\sum_{i=0}^k c_i^* (x_i - \hat{x}_i) }_{\rm Nonlinearity}.
\end{equation*}

\paragraph{From linearized iterates to iterates on manifolds.}
On general Riemannian manifolds, due to the curved geometry, it becomes nontrivial to generalize such error decomposition. To mitigate this, we start with identification of linearized iterates on manifolds in the tangent space of $x^*$. For notational convenience, we denote $\Delta_{x} \coloneqq \Exp^{-1}_{x^*}(x)$ for any $x \in \gX$. We now consider the linearized iterates $\hat{x}_i$ are produced by the following progression as 
\begin{equation}
    \Delta_{\hat{x}_i} = G [\Delta_{\hat{x}_{i-1}}], \label{lin_iter_mfd}
\end{equation}
for some $G: T_{x^*}\M \xrightarrow{} T_{x^*}\M$ as a self-adjoint, positive semi-definite operator with $\| G\|_{x^*} \leq \sigma < 1$, where we denote $\| A \|_{x^*}$ as the operator norm for any linear operator $A$ on the tangent space $T_{x^*}\M$. In fact, we show in the Lemma \ref{lin_iter_approx_lemma} that the progression of iterates from the Riemannian gradient descent method is locally linear on the tangent space of the local minimizer $x^*$. This requires the following regularity assumption on the objective function $f$. 
\begin{assumption}
\label{assump_regularity_f}
Function $f$ has geodesic Lipschitz gradient and Lipschitz Hessian.
\end{assumption} 

\begin{remark}
Assumption \ref{assump_regularity_f} is used to ensure sufficient smoothness of the function such that the Riemannian gradient and Hessian are bounded at optimality. 
\end{remark}

\begin{lemma}
\label{lin_iter_approx_lemma}
Under Assumptions \ref{assump_normal_nei}, \ref{assump_regularity_f}, suppose the iterates, generated by the Riemannian gradient descent method, are $x_{i+1} = \Exp_{x_i}(- \eta \,\grad f(x_i))$. 
Then, we have 
\begin{equation*}
    \Delta_{x_i} = \big( \id - \eta \, \hess f(x^*) \big) [\Delta_{x_{i-1}}] + \varepsilon_i
\end{equation*}
where $\id$ denotes the identity operator and $\| \varepsilon_i \|_{x^*} = O( d^2(x_i, x^*) )$ and $\varepsilon_0 = 0$.
\end{lemma}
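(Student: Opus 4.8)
The plan is to express one step of Riemannian gradient descent in the single tangent space $T_{x^*}\M$, using $\Delta_x = \Exp_{x^*}^{-1}(x)$ as the ``coordinate'' of a point $x$, and to show that in these coordinates the step is the affine map $\id - \eta\,\hess f(x^*)$ up to a second-order remainder. The two ingredients are (i) a Taylor expansion of the transported Riemannian gradient around $x^*$, which produces the Hessian, and (ii) the metric-distortion lemmas (Lemmas \ref{lemma_move_vectorsum} and \ref{lemma_metric_distort}), which let me commute the exponential map based at $x_{i-1}$ with the one based at $x^*$.

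First I would linearize the gradient. Since $x^*$ is a local minimizer, $\grad f(x^*) = 0$. Consider the geodesic $\gamma(t) = \Exp_{x^*}(t\,\Delta_{x_{i-1}})$ joining $x^*$ to $x_{i-1}$ and the transported field $g(t) = \Gamma_{\gamma(t)}^{x^*}\grad f(\gamma(t))$. By the definition of the Riemannian Hessian as the covariant derivative of the gradient, $g(0)=0$ and $g'(0)=\hess f(x^*)[\Delta_{x_{i-1}}]$; Taylor's theorem together with the Lipschitz-Hessian hypothesis of Assumption \ref{assump_regularity_f} then gives
\[
\Gamma_{x_{i-1}}^{x^*}\grad f(x_{i-1}) = \hess f(x^*)[\Delta_{x_{i-1}}] + O\big(d^2(x_{i-1},x^*)\big).
\]
In particular $\|\grad f(x_{i-1})\| = O(d(x_{i-1},x^*))$, which I use below to keep the gradient step small.

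Next I would pass from the update $x_i = \Exp_{x_{i-1}}(-\eta\,\grad f(x_{i-1}))$ back to $T_{x^*}\M$. Setting $u=\Delta_{x_{i-1}}$ (so $\Exp_{x^*}(u)=x_{i-1}$) and $v = \Gamma_{x_{i-1}}^{x^*}(-\eta\,\grad f(x_{i-1}))$, we have $\Exp_{x_{i-1}}(\Gamma_{x^*}^{x_{i-1}}v)=x_i$, so Lemma \ref{lemma_move_vectorsum} yields
\[
d\big(\Exp_{x^*}(u+v),\,x_i\big) \le \min\{\|u\|,\|v\|\}\,C_\kappa(\|u\|+\|v\|) = O\big(d^3(x_{i-1},x^*)\big),
\]
since both $\|u\|$ and $\|v\|$ are $O(d(x_{i-1},x^*))$ and $C_\kappa(r)=O(r^2)$ as $r\to 0$. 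Applying the lower Lipschitz bound of Lemma \ref{lemma_metric_distort}(2) (with $z=x^*$) converts this into $\|(u+v) - \Delta_{x_i}\| = O\big(d^3(x_{i-1},x^*)\big)$, because $\Exp_{x^*}^{-1}(\Exp_{x^*}(u+v)) = u+v$ inside the unique-geodesic neighbourhood. Substituting the gradient expansion for $v$ then gives
\[
\Delta_{x_i} = u+v+O(d^3) = \big(\id - \eta\,\hess f(x^*)\big)[\Delta_{x_{i-1}}] + O\big(d^2(x_{i-1},x^*)\big),
\]
and the non-divergence part of Assumption \ref{assump_normal_nei} lets me replace $d(x_{i-1},x^*)$ by $d(x_i,x^*)$ in the remainder, so I define $\varepsilon_i$ to be this $O(d^2)$ term; the base case $\varepsilon_0=0$ holds by taking $x_0$ as the exact initialization.

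The main obstacle is step (i): making the gradient linearization rigorous. One must justify differentiating $g(t)$ through the parallel transport and identify $g'(0)$ with $\hess f(x^*)[\Delta_{x_{i-1}}]$ purely from the covariant-derivative definition, then control the second-order remainder uniformly in $\gX$ using the Lipschitz-Hessian bound. A secondary subtlety is verifying that all the implicit $O(\cdot)$ constants depend only on the curvature and diameter of $\gX$, on $\eta$, and on the Lipschitz constants of $f$ — not on $i$ — so that the error estimate is genuinely uniform along the iteration.
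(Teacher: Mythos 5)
Your proof is correct, and for the key step it takes a genuinely different route from the paper. Both arguments share the gradient linearization: the paper invokes exactly the estimate you describe, namely $\|\Gamma_{x_{i-1}}^{x^*}\grad f(x_{i-1}) - \hess f(x^*)[\Delta_{x_{i-1}}]\|_{x^*}\leq \frac{\rho}{2}\|\Delta_{x_{i-1}}\|_{x^*}^2$, which is a stated consequence of geodesic Hessian Lipschitzness (Lemma \ref{hessian_lipschitz_prop}), so the ``main obstacle'' you worry about is already packaged in the paper's function-class lemmas. Where you diverge is in transferring the update $x_i=\Exp_{x_{i-1}}(-\eta\,\grad f(x_{i-1}))$ to $T_{x^*}\M$: the paper Taylor-expands the transition map $\Exp_{x_{i-1}}^{x^*}=\Exp_{x^*}^{-1}\circ\Exp_{x_{i-1}}$ to second order, computes its differential at $0$ via the inverse function theorem, and then shows the correction operator $(\Gamma_{x_{i-1}}^{x^*}T_{x^*}^{x_{i-1}})^{-1}$ equals $\id$ plus an explicit curvature term $\frac{1}{6}\Riem_{x^*}(\Delta_{x_{i-1}},\cdot)\Delta_{x_{i-1}}+O(\|\Delta_{x_{i-1}}\|^3)$; you instead apply Lemma \ref{lemma_move_vectorsum} with $u=\Delta_{x_{i-1}}$ and $v=-\eta\,\Gamma_{x_{i-1}}^{x^*}\grad f(x_{i-1})$, together with Lemma \ref{lemma_metric_distort}(2), to get $\|\Delta_{x_i}-(u+v)\|_{x^*}=O(d^3(x_{i-1},x^*))$ directly. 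Your route is shorter and reuses machinery the paper already deploys elsewhere (e.g., in the proof of Lemma \ref{lemma_deviation_recur_tange_avera}); the paper's route is heavier but identifies the leading curvature correction explicitly and, more importantly, is the version that survives the passage to general retractions in Theorem \ref{theorem_retr_convergence}, where no analogue of Lemma \ref{lemma_move_vectorsum} is available. One cosmetic point: your remainder is naturally $O(d^2(x_{i-1},x^*))$ rather than $O(d^2(x_i,x^*))$; the paper's own derivation has the same index offset, and both are harmless since every downstream use only needs $\|\varepsilon_i\|_{x^*}=O(d^2(x_0,x^*))$ via Assumption \ref{assump_normal_nei}.
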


Lemma \ref{lin_iter_approx_lemma} suggests that it is reasonable to consider the linearized iterates $\{\hat{x}_k\}$ defined in \eqref{lin_iter_mfd} where $G = \id - \eta \, \hess f(x^*)$. It is clear that for a local minimizer $x^*$, there exists $\mu, L > 0$ such that $\mu \, \id \preceq \hess f(x^*) \preceq L \, \id$.
This is irrespective of whether the function $f$ is geodesic strongly convex or has geodesic Lipschitz gradient. Thus, for proper choices of $\eta$, we can always ensure $G$ is positive semi-definite and contractive. 

In this paper, the convergence analysis focus on the case when $G = \id - \eta \, \hess f(x^*)$ and $\{x_i\}$ are given by Riemannian gradient descent to simplify the bounds. However, we highlight that most of the analysis holds for more general and symmetric $G$.

Now, we approach the convergence of $\bar{x}_{c^*, x}$ computed from manifold weighted average recursion in \eqref{mfd_recursive_average} by considering the following error bound decomposition:
\begin{equation*}
    d(\bar{x}_{c^*, x}, x^*) \leq  \underbrace{d(\bar{x}_{\hat{c}^*, \hat{x}}, x^*)}_{\rm Linear \, term} + \underbrace{d(\bar{x}_{\hat{c}^*,\hat{x}}, \bar{x}_{c^*, \hat{x}})}_{\rm Stability} + \underbrace{d(\bar{x}_{c^*, \hat{x}}, \bar{x}_{c^*, x})}_{\rm Nonlinearity},
\end{equation*}
where we denote $\hat{c}^*$ as the coefficients solving \eqref{coeff_problem_main} with the residuals $\hat{r}_i = \Delta_{\hat{x}_{i+1}} - \Delta_{\hat{x}_i}$ from the linearized iterates $\{\hat{x}_i\}$ in \eqref{lin_iter_mfd} and $\bar{x}_{\hat{c}^*, \hat{x}}$, $\bar{x}_{{c}^*, \hat{x}}$ as weighted average computed using pairs $\{ (\hat{c}^*_i, \hat{x}_i) \}_{i=0}^k$ and $\{ ({c}^*_i, \hat{x}_i) \}_{i=0}^k$ respectively. Before we bound each of the error term, we first present a lemma relating the averaging on manifolds to averaging on the tangent space. 

\begin{lemma}
\label{lemma_deviation_recur_tange_avera}
Under Assumption \ref{assump_normal_nei}, for some coefficients $\{c_i\}_{i=0}^k$ with $\sum_{i=0}^k c_i = 1$ and any iterate sequence $\{x_i\}_{i=0}^k$, consider $\bar{x}_{c, x}$ computed from \eqref{mfd_recursive_average} via the given coefficients and the iterates. Then, we have 
\begin{equation*}
    \Delta_{\bar{x}_{c, x}} = \sum_{i = 0}^k c_i \Delta_{x_i} + e,
\end{equation*}
where $\|e \|_{x^*} = O(d^3(x_0, x^*))$.
\end{lemma}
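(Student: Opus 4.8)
The plan is to prove the result by induction on $i$, establishing the stronger normalized claim that, writing $S_i \coloneqq \sum_{j=0}^i c_j$ and $D \coloneqq d(x_0, x^*)$, the partial average $\tilde{x}_i$ from \eqref{mfd_recursive_average} satisfies
\[
\Delta_{\tilde{x}_i} = \frac{1}{S_i}\sum_{j=0}^i c_j \Delta_{x_j} + e_i, \qquad \|e_i\|_{x^*} = O(D^3),
\]
together with the a priori bound $d(\tilde{x}_i, x^*) = O(D)$. Since $S_k = 1$, the case $i=k$ is exactly the claimed statement with $e = e_k$. The base case $i=0$ is exact: as $\tilde{x}_{-1} = x_0$ and $S_0 = c_0$, the recursion gives $\tilde{x}_0 = \Exp_{x_0}(\Exp_{x_0}^{-1}(x_0)) = x_0$, so $\Delta_{\tilde{x}_0} = \Delta_{x_0} = \tfrac{c_0}{S_0}\Delta_{x_0}$ and $e_0 = 0$. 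Throughout I would treat $k$ and the fixed coefficients (hence $\max_i |c_i/S_i|$, assuming the partial sums $S_i$ are nonzero) as $O(1)$ constants, so that the implicit $O(D^3)$ constant may depend on them.

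For the inductive step, abbreviate $p \coloneqq \tilde{x}_{i-1}$ and $w \coloneqq \tfrac{c_i}{S_i}\Exp_p^{-1}(x_i) \in T_p\M$, so $\tilde{x}_i = \Exp_p(w)$. The first ingredient is to \emph{commute} this exponential update past $\Exp_{x^*}^{-1}$. I would apply Lemma \ref{lemma_move_vectorsum} with base point $x^*$, $u = \Delta_p$ (so $\Exp_{x^*}(u) = p$), and $v = \Gamma_p^{x^*} w$; isometry of parallel transport gives $\Gamma_{x^*}^p v = w$ and $\|v\| = \|w\|$, whence
\[
d\big(\Exp_{x^*}(\Delta_p + \Gamma_p^{x^*} w),\ \tilde{x}_i\big) \leq \min\{\|\Delta_p\|, \|w\|\}\, C_\kappa(\|\Delta_p\| + \|w\|).
\]
By the inductive bound $d(p,x^*) = O(D)$ and Assumption \ref{assump_normal_nei}, both $\|\Delta_p\|$ and $\|w\| \le \tfrac{|c_i|}{|S_i|}d(p,x_i)$ are $O(D)$, and since $C_\kappa(r) = O(r^2)$ for small $r$, the right-hand side is $O(D^3)$. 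Converting this distance bound into a tangent-space bound via the bi-Lipschitz estimate of Lemma \ref{lemma_metric_distort}(2) at $z = x^*$ yields $\Delta_{\tilde{x}_i} = \Delta_p + \Gamma_p^{x^*} w + O(D^3)$.

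The second ingredient replaces the parallel-transported residual by a difference of log-maps at $x^*$. Applying Lemma \ref{lemma_move_vectorsum} again with $u = \Delta_p$ and $v = \Gamma_p^{x^*}\Exp_p^{-1}(x_i)$ — chosen so that $\Exp_p(\Gamma_{x^*}^p v) = x_i$ — gives $d(\Exp_{x^*}(\Delta_p + v), x_i) = O(D^3)$, and Lemma \ref{lemma_metric_distort}(2) converts this to $\Gamma_p^{x^*}\Exp_p^{-1}(x_i) = \Delta_{x_i} - \Delta_p + O(D^3)$. Multiplying by $c_i/S_i$ and substituting into the previous display produces the per-step recursion
\[
\Delta_{\tilde{x}_i} = \Big(1 - \tfrac{c_i}{S_i}\Big)\Delta_p + \tfrac{c_i}{S_i}\Delta_{x_i} + O(D^3).
\]
Inserting the inductive hypothesis for $\Delta_p$ and using $S_i - c_i = S_{i-1}$ collapses the first two terms into $\tfrac{1}{S_i}\sum_{j=0}^i c_j \Delta_{x_j}$, with residual $e_i = \tfrac{S_{i-1}}{S_i}e_{i-1} + O(D^3) = O(D^3)$; the a priori bound $d(\tilde{x}_i,x^*) = O(D)$ then follows from this formula together with $\|\Delta_{x_j}\| = d(x_j,x^*) = O(D)$, closing the induction.

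The main obstacle I anticipate is the correct orchestration of the two metric-distortion lemmas: selecting the vectors $u,v$ in Lemma \ref{lemma_move_vectorsum} so that one endpoint of its inequality is \emph{exactly} $\tilde{x}_i$ (respectively $x_i$), and then invoking Lemma \ref{lemma_metric_distort}(2) to pass from a distance-level $O(D^3)$ estimate to a tangent-vector $O(D^3)$ estimate in $T_{x^*}\M$. Once the per-step bound is in hand, the error accumulation over the fixed number $k+1$ of steps is routine, the only caveat being that the partial sums $S_i$ must be nonzero so that the recursion and the constants $1/S_i$ are well defined.
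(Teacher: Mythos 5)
Your proof is correct and follows the same overall strategy as the paper's: induction along the recursion \eqref{mfd_recursive_average}, with Lemma \ref{lemma_move_vectorsum} supplying the $O(d^3(x_0,x^*))$ discrepancy between the exponential update at $\tilde{x}_{i-1}$ and vector addition in $T_{x^*}\M$, and Lemma \ref{lemma_metric_distort}(2) converting distance bounds into tangent-vector bounds. The one genuine difference is the bookkeeping of the inductive error. The paper keeps the idealized point $\Exp_{x^*}(\widetilde{\Delta}_{x_i})$ on the manifold (where $\widetilde{\Delta}_{x_i}$ is the exact tangent-space recursion, which coincides with your $\tfrac{1}{S_i}\sum_{j\leq i}c_j\Delta_{x_j}$) and must therefore compare exponential maps based at the two nearby points $\tilde{x}_i$ and $\Exp_{x^*}(\widetilde{\Delta}_{x_i})$; this costs an additional triangle-inequality split and invocations of parts (1) and (3) of Lemma \ref{lemma_metric_distort}. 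You instead pull everything back to $T_{x^*}\M$ at the end of each step and propagate $e_{i-1}$ linearly through the identity $(1-\tfrac{c_i}{S_i})\Delta_{\tilde{x}_{i-1}}+\tfrac{c_i}{S_i}\Delta_{x_i}$, which requires only part (2); this is a mild but real simplification of the inductive step at no loss of generality. Two points to make explicit in a full write-up: (i) the intermediate points such as $\Exp_{x^*}(\Delta_p+\Gamma_p^{x^*}w)$ must be assumed to remain in $\gX$ for the distortion lemmas to apply (the paper makes the same implicit assumption), and (ii) the nonvanishing of the partial sums $S_i$ is already needed for \eqref{mfd_recursive_average} to be well defined, so it is not an extra hypothesis on your part.
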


\begin{remark}
Lemma \ref{lemma_deviation_recur_tange_avera} shows that the error between the averaging on the manifold and averaging on the tangent space is on the order of $O(d^3(x_0, x^*))$. This relies heavily on the metric distortion bound given in Lemma \ref{lemma_move_vectorsum}, \ref{lemma_metric_distort}, which only holds for the case of exponential map and parallel transport. Nevertheless, we highlight that when the general retraction and vector transport are used, we can follow the idea of \cite[Lemma 12]{tripuraneni2018averaging} to show the error is on the order of $O(d^2(x_0, x^*))$. See Proposition \ref{general_bounded_deviation_weighted_aver} and Section \ref{retr_conv_sect} for more details where we discuss the convergence under the more general setup. 
\end{remark}

\paragraph{Error bound from the linear term.}
As in the Euclidean space, we see that the extrapolation using the linearized iterates converges in a near-optimal rate, via the regularized Chebyshev polynomial. 

\begin{definition}[Regularized Chebyshev polynomial \cite{scieur2020regularized}]
The regularized Chebyshev polynomial of degree $k$, in the range of $[0,\sigma]$ with a regularization parameter $\alpha$, denoted as $C^{[0,\sigma]}_{k,\alpha}(x)$ is defined as $C^{[0,\sigma]}_{k,\alpha}(x) = \argmin_{p \in \gP_k^1} \max_{x \in [0,\sigma]} p^2(x) + \alpha \|p \|^2_2$,
where we denote $\gP_k^1 \coloneqq \{ p \in \sR[x] : \deg(p) = k, p(1) = 1 \}$ as the set of polynomials of degree $k$ with coefficients summing to $1$ and $\| p\|_2$ is the Euclidean norm of the coefficients of the polynomial $p$. We write the maximum valued as $S_{k,\alpha}^{[0,\sigma]} \coloneqq \sqrt{\max_{x \in [0,\sigma]}  (C^{[0,\sigma]}_{k,\alpha}(x))^2 + \alpha \| C^{[0,\sigma]}_{k,\alpha}(x)\|^2_2}$.
\end{definition}

Next, we present the error bound coming from the linear term in Lemma \ref{error_lin_term}, which follows from the definition of regularized Chebyshev polynomial and Lemma \ref{lemma_deviation_recur_tange_avera}. We notice that due to the curvature of the manifold, we see an additional error term $\epsilon_1$ compared to the Euclidean counterpart.

\begin{lemma}[Error from the linear term]
\label{error_lin_term}
Under Assumption \ref{assump_normal_nei},
let $\bar{x}_{\hat{c}^*, \hat{x}}$ be computed from \eqref{mfd_recursive_average} using $\{ (\hat{c}_i^*, \hat{x}_i) \}_{i=0}^k$. Then,
$$d(\bar{x}_{\hat{c}^*, \hat{x}}, x^*) \leq \frac{d(x_0, x^*)}{1-\sigma} \sqrt{ (S^{[0,\sigma]}_{k,\bar{\lambda}})^2 - \frac{\lambda}{d^2(x_0, x^*)} \| \hat{c}^* \|_2^2  } + \epsilon_1,$$ 
where $\bar{\lambda} = \lambda/d^2(x_0, x^*)$ and $\epsilon_1 = O(d^3(x_0, x^*))$.
\end{lemma}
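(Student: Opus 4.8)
The plan is to bound $d(\bar{x}_{\hat{c}^*, \hat{x}}, x^*)$ by first passing from the manifold average to the tangent-space average using Lemma \ref{lemma_deviation_recur_tange_avera}, and then reducing the tangent-space problem to the purely Euclidean analysis of \cite{scieur2020regularized}. Concretely, by Lemma \ref{lemma_deviation_recur_tange_avera} applied to the pairs $\{(\hat{c}^*_i, \hat{x}_i)\}$, we have $\Delta_{\bar{x}_{\hat{c}^*,\hat{x}}} = \sum_{i=0}^k \hat{c}^*_i \Delta_{\hat{x}_i} + e$ with $\|e\|_{x^*} = O(d^3(x_0,x^*))$. Since $d(\bar{x}_{\hat{c}^*,\hat{x}}, x^*) = \|\Delta_{\bar{x}_{\hat{c}^*,\hat{x}}}\|_{x^*}$, a triangle inequality gives
\begin{equation*}
  d(\bar{x}_{\hat{c}^*,\hat{x}}, x^*) \leq \Big\| \sum_{i=0}^k \hat{c}^*_i \Delta_{\hat{x}_i} \Big\|_{x^*} + \|e\|_{x^*},
\end{equation*}
so the second term is absorbed into $\epsilon_1 = O(d^3(x_0,x^*))$, and it remains to bound the first term on the tangent space $T_{x^*}\M$ alone.

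On $T_{x^*}\M$ the linearized iterates satisfy the clean linear recursion $\Delta_{\hat{x}_i} = G[\Delta_{\hat{x}_{i-1}}] = G^i[\Delta_{\hat{x}_0}]$ from \eqref{lin_iter_mfd}, so $\sum_i \hat{c}^*_i \Delta_{\hat{x}_i} = \big(\sum_i \hat{c}^*_i G^i\big)[\Delta_{\hat{x}_0}] = p_{\hat{c}^*}(G)[\Delta_{\hat{x}_0}]$, where $p_{\hat{c}^*}(x) = \sum_i \hat{c}^*_i x^i$ is the polynomial whose coefficients are $\hat{c}^*$; note $p_{\hat{c}^*}(1) = \sum_i \hat{c}^*_i = 1$, so $p_{\hat{c}^*} \in \gP_k^1$. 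The residuals used to define $\hat{c}^*$ are $\hat{r}_i = \Delta_{\hat{x}_{i+1}} - \Delta_{\hat{x}_i} = (G - \id)G^i[\Delta_{\hat{x}_0}]$, which means the optimization \eqref{coeff_problem_main} over $\hat{c}^*$ is exactly the Euclidean regularized-residual problem analyzed in \cite{scieur2020regularized}, now phrased in the eigenbasis of the self-adjoint operator $G$. I would next diagonalize $G$: since $G$ is self-adjoint PSD with $\|G\|_{x^*} \leq \sigma < 1$, write $G = \sum_j \nu_j P_j$ with eigenvalues $\nu_j \in [0,\sigma]$ and decompose $\Delta_{\hat{x}_0}$ along the eigenvectors. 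Then
\begin{equation*}
  \| p_{\hat{c}^*}(G)[\Delta_{\hat{x}_0}]\|_{x^*}^2 = \sum_j p_{\hat{c}^*}(\nu_j)^2 \, \langle \Delta_{\hat{x}_0}, P_j \Delta_{\hat{x}_0}\rangle_{x^*} \leq \max_{x\in[0,\sigma]} p_{\hat{c}^*}(x)^2 \cdot \|\Delta_{\hat{x}_0}\|_{x^*}^2.
\end{equation*}

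The final step is to convert the optimality defining $\hat{c}^*$ into the regularized-Chebyshev bound. Because $\hat{c}^*$ minimizes $\|\sum_i c_i \hat{r}_i\|_{x^*}^2 + \lambda\|c\|_2^2 = \|(G-\id)p_c(G)[\Delta_{\hat{x}_0}]\|_{x^*}^2 + \lambda\|c\|_2^2$ over $c^\top 1 = 1$, and since $\|G - \id\|_{x^*} \geq 1 - \sigma$ controls the contraction, the objective value at $\hat{c}^*$ is at most the value attained by the coefficients of the regularized Chebyshev polynomial $C^{[0,\sigma]}_{k,\bar\lambda}$, which by definition equals $(S^{[0,\sigma]}_{k,\bar\lambda})^2 \cdot d^2(x_0,x^*)$ after the rescaling $\bar\lambda = \lambda/d^2(x_0,x^*)$ (and using $\|\Delta_{\hat{x}_0}\|_{x^*} = d(x_0,x^*)$). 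Dividing through by $(1-\sigma)^2$ to undo the $(G-\id)$ factor and subtracting off the regularization contribution $\lambda\|\hat{c}^*\|_2^2$ yields the stated bound $\frac{d(x_0,x^*)}{1-\sigma}\sqrt{(S^{[0,\sigma]}_{k,\bar\lambda})^2 - \frac{\lambda}{d^2(x_0,x^*)}\|\hat{c}^*\|_2^2}$. I expect the main obstacle to be this last bookkeeping step: carefully tracking how the $\lambda\|c\|_2^2$ regularization interacts with the minimax definition of $S^{[0,\sigma]}_{k,\bar\lambda}$ and how the factor $(G-\id)$ (bounded below by $1-\sigma$) enters, so that the regularization term comes out with the correct sign inside the square root rather than being merely discarded — this is precisely the place where the Euclidean argument of \cite{scieur2020regularized} must be invoked verbatim, with the manifold contributing only the harmless $\epsilon_1$ correction already isolated above.
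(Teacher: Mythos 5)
Your proposal is correct and follows essentially the same route as the paper: the paper's proof of this lemma is literally ``combine Lemma~\ref{lemma_deviation_recur_tange_avera} (manifold-to-tangent-space averaging error, absorbed into $\epsilon_1$) with Lemma~\ref{convergence_lin_iterate_theorem}'', and that latter lemma is exactly your tangent-space argument --- write $\hat{r}_i = (G-\id)G^i[\Delta_{x_0}]$, use optimality of $\hat{c}^*$ against the regularized Chebyshev coefficients together with $\|G-\id\|_{x^*}\leq 1$ to bound the residual objective by $(S^{[0,\sigma]}_{k,\bar{\lambda}})^2 d^2(x_0,x^*)$, subtract $\lambda\|\hat{c}^*\|_2^2$, and apply $\|(G-\id)^{-1}\|_{x^*}\leq 1/(1-\sigma)$. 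The one loose spot is your attribution of the Chebyshev comparison to ``$\|G-\id\|_{x^*}\geq 1-\sigma$'': the \emph{upper} bound $\|G-\id\|_{x^*}\leq 1$ is what that step needs, while the spectral lower bound $\id - G \succeq (1-\sigma)\,\id$ enters only in the final inversion; your eigendecomposition bound on $\|p_{\hat{c}^*}(G)[\Delta_{\hat{x}_0}]\|_{x^*}$ is a harmless detour that the final chain does not actually use.
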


\paragraph{Error bound from coefficient stability.}
Now we come to bound the deviation between the optimal coefficients computed via the Riemannian gradient descent iterates $\{ x_i \}$ and the linearized iterates $\{ \hat{x}_i \}$. To this end, we require the following results on the coefficients. 

\begin{lemma}[Bound on norm of coefficients]
\label{coeff_bound}
Under Assumptions~\ref{assump_normal_nei}, \ref{assump_regularity_f},
let the coefficients $c^*, \hat{c}^*$ be solved from \eqref{coeff_problem_main} using $\{x_i \}, \{\hat{x}_i \}$ respectively, where $\{x_i \}$ are given by the Riemannian gradient descent and $\{ \hat{x}_i\}$ satisfy \eqref{lin_iter_mfd}. Then, we have $\| c^* \|_2 \leq \sqrt{\frac{ \sum_{i=0}^k d^2(x_i, x_{i+1}) + \lambda  }{(k+1) \lambda } }$ and $\| c^* - \hat{c}^* \|_2 \leq \frac{1}{\lambda } \Big(  \frac{2 d(x_0, x^*)}{1-\sigma} \psi  + (\psi)^2 \Big) \| \hat{c}^*\|_2$ 
for some $\psi = O(d^2(x_0, x^*))$.
\end{lemma}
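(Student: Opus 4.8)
The plan is to prove the two bounds in Lemma~\ref{coeff_bound} separately, since the first is a direct consequence of the closed-form solution in Proposition~\ref{prop_cstar_derivation}, while the second requires a perturbation argument relating the two linear systems.

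\textbf{Bound on $\|c^*\|_2$.} First I would invoke Proposition~\ref{prop_cstar_derivation}, which gives $c^* = (R+\lambda I)^{-1}\mathbf{1} / \big(\mathbf{1}^\top (R+\lambda I)^{-1}\mathbf{1}\big)$, where $R = [\langle r_i, r_j\rangle_{x_k}]_{i,j}$ is positive semidefinite. The key is to lower-bound the denominator. Since $R + \lambda I \succeq \lambda I$, we have $(R+\lambda I)^{-1} \preceq \lambda^{-1} I$, and I would instead want a lower bound on the scalar $\mathbf{1}^\top(R+\lambda I)^{-1}\mathbf{1}$. Observing that $\|R+\lambda I\|_{\mathrm{op}} \leq \trace(R) + \lambda = \sum_{i=0}^k \|r_i\|_{x_k}^2 + \lambda$, and that $\|r_i\|_{x_k} = d(x_i, x_{i+1})$ because parallel transport is isometric and $\|\Exp^{-1}_{x_i}(x_{i+1})\|_{x_i} = d(x_i,x_{i+1})$, the largest eigenvalue of $R+\lambda I$ is at most $\sum_i d^2(x_i,x_{i+1}) + \lambda$. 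Hence $(R+\lambda I)^{-1} \succeq \big(\sum_i d^2(x_i,x_{i+1}) + \lambda\big)^{-1} I$, giving $\mathbf{1}^\top (R+\lambda I)^{-1}\mathbf{1} \geq (k+1)/\big(\sum_i d^2(x_i,x_{i+1}) + \lambda\big)$. Combining with $\|(R+\lambda I)^{-1}\mathbf{1}\|_2 \leq \lambda^{-1}\sqrt{k+1}$ and simplifying yields $\|c^*\|_2 \leq \sqrt{(\sum_i d^2(x_i,x_{i+1}) + \lambda)/((k+1)\lambda)}$.

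\textbf{Bound on $\|c^* - \hat{c}^*\|_2$.} This is the main obstacle. Let $\hat{R} = [\langle \hat{r}_i, \hat{r}_j\rangle_{x^*}]_{i,j}$ be the Gram matrix of the linearized residuals $\hat{r}_i = \Delta_{\hat{x}_{i+1}} - \Delta_{\hat{x}_i}$, so that $\hat{c}^* = (\hat{R}+\lambda I)^{-1}\mathbf{1}/(\mathbf{1}^\top(\hat{R}+\lambda I)^{-1}\mathbf{1})$. The strategy is to write both $c^*$ and $\hat{c}^*$ as normalized solutions of the same form and bound the difference via a perturbation estimate in the matrix $R - \hat{R}$. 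The crucial quantity is $\|R - \hat{R}\|$: I would control this by comparing each residual $r_i = \Gamma_{x_i}^{x_k}\Exp^{-1}_{x_i}(x_{i+1})$ to $\hat{r}_i$. Using Lemma~\ref{lin_iter_approx_lemma}, the actual iterates satisfy $\Delta_{x_i} = G[\Delta_{x_{i-1}}] + \varepsilon_i$ with $\|\varepsilon_i\|_{x^*} = O(d^2(x_i,x^*))$, while the linearized iterates satisfy $\Delta_{\hat{x}_i} = G[\Delta_{\hat{x}_{i-1}}]$ exactly. Transporting $r_i$ from $T_{x_k}\M$ to $T_{x^*}\M$ and applying the metric-distortion bounds of Lemma~\ref{lemma_metric_distort} (parts (1) and (2)) to convert between parallel transports and inverse-exponential differences, I expect to show that each entry of $R - \hat{R}$ differs by $O(d^3(x_0,x^*))$ order terms relative to the scale $d^2(x_0,x^*)$, so that $\|R - \hat{R}\| = \psi$ with $\psi = O(d^2(x_0,x^*))$ after accounting for the nonlinearity and the curvature corrections.

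Finally, I would feed $\|R - \hat{R}\| \leq \psi$ into a standard sensitivity bound for the normalized solution. Writing $c^* - \hat{c}^*$ in terms of $(R+\lambda I)^{-1} - (\hat{R}+\lambda I)^{-1} = (R+\lambda I)^{-1}(\hat{R}-R)(\hat{R}+\lambda I)^{-1}$ and tracking both the numerator perturbation and the denominator perturbation, the $\lambda^{-2}$ scaling from the two resolvents combines with a factor of $\|\hat{r}\| = O(d(x_0,x^*)/(1-\sigma))$ coming from the magnitude of the linearized residuals (which govern $\|(\hat R + \lambda I)^{-1}\mathbf 1\|$ through $\hat c^*$). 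The two contributions $2\psi\, d(x_0,x^*)/(1-\sigma)$ and $\psi^2$ correspond respectively to the first-order and second-order terms in $\|R-\hat R\|$, producing the stated bound $\|c^*-\hat c^*\|_2 \leq \lambda^{-1}\big(2 d(x_0,x^*)(1-\sigma)^{-1}\psi + \psi^2\big)\|\hat c^*\|_2$. The delicate bookkeeping lies in correctly extracting the $(1-\sigma)^{-1}$ factor, which should come from summing the geometric progression $\|\Delta_{\hat x_i}\| \leq \sigma^i \|\Delta_{\hat x_0}\|$ implied by $\|G\|_{x^*}\leq\sigma$ when bounding the linearized residual norms.
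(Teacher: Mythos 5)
Your overall strategy matches the paper's: the first bound comes from the closed form in Proposition~\ref{prop_cstar_derivation} together with $\|R\|_2\le\sum_{i}\|r_i\|_{x_k}^2=\sum_i d^2(x_i,x_{i+1})$ (using isometry of parallel transport), and the second bound comes from a perturbation estimate $\|c^*-\hat c^*\|_2\le\lambda^{-1}\|R-\hat R\|_2\|\hat c^*\|_2$ combined with $\|R-\hat R\|_2\le 2\psi\,\frac{d(x_0,x^*)}{1-\sigma}+\psi^2$, where $\psi$ bounds the aggregate residual deviation $\sum_i\|\Gamma_{x_k}^{x^*}r_i-\hat r_i\|_{x^*}$ via Lemma~\ref{lin_iter_approx_lemma}, Lemma~\ref{lemma_metric_distort} and the geometric decay of the linearized residuals. (Note one internal inconsistency: midway you set $\|R-\hat R\|=\psi=O(d^2(x_0,x^*))$, but in your final paragraph, correctly, $\psi$ is the residual-level deviation and $\|R-\hat R\|$ is quadratic in the residuals, hence $O(d^3(x_0,x^*))$.) Both sensitivity inequalities are exactly the content of \cite[Proposition 3.2]{scieur2020regularized}, which the paper simply invokes; your sketch of the resolvent/normalization bookkeeping is plausible but not carried out.

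There is, however, a concrete step that fails in your first bound. Combining $\|(R+\lambda I)^{-1}\mathbf 1\|_2\le\lambda^{-1}\sqrt{k+1}$ with $\mathbf 1^\top(R+\lambda I)^{-1}\mathbf 1\ge (k+1)/(\|R\|_2+\lambda)$ gives only $\|c^*\|_2\le\frac{\|R\|_2+\lambda}{\lambda\sqrt{k+1}}$, which exceeds the claimed $\sqrt{\tfrac{\|R\|_2+\lambda}{(k+1)\lambda}}$ by the factor $\sqrt{(\|R\|_2+\lambda)/\lambda}$; this factor diverges in the relevant regime $\lambda=O(d^s(x_0,x^*))$ with $s>2$ and $\|R\|_2=O(d^2(x_0,x^*))$, so "simplifying" cannot yield the stated inequality. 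The two estimates must be coupled rather than applied independently: writing $R=U\Lambda U^\top$ and $v=U^\top\mathbf 1$, one uses
\begin{equation*}
\|(R+\lambda I)^{-1}\mathbf 1\|_2^2=\sum_i\frac{v_i^2}{(\lambda_i+\lambda)^2}\le\frac{1}{\lambda}\sum_i\frac{v_i^2}{\lambda_i+\lambda}=\frac{1}{\lambda}\,\mathbf 1^\top(R+\lambda I)^{-1}\mathbf 1,
\end{equation*}
so that $\|c^*\|_2^2\le\frac{1}{\lambda\,\mathbf 1^\top(R+\lambda I)^{-1}\mathbf 1}\le\frac{\|R\|_2+\lambda}{(k+1)\lambda}$. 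With this repair (and with the matrix-perturbation inequality actually derived rather than asserted), your argument coincides with the paper's.
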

It should be noted that in the Euclidean space, $\psi = \sum_{i=0}^k \| \Delta_{x_i} - \Delta_{\hat{x}_i} \|_2 = \| x_i - \hat{x}_i \|_2$ and also can be shown to have an order of $O(d^2(x_0,x^*))$ under some Lipschitz conditions on the function (See \cite[Proposition 3.8]{scieur2020regularized}). On manifolds, such error also suffers from additional distortion, which is on the order of $O(d^2(x_0,x^*))$. 

Next, based on Lemma \ref{coeff_bound}, we show that the error from coefficient stability can be bounded as follows. The proof follows from linearizing the weighted average on the tangent space $T_{x^*}\M$ where we bound the deviation arising from the coefficients. Hence, an extra error $\epsilon_2$ appears in the bound. 

\begin{lemma}[Error from coefficient estimation]
\label{error_coeff_lemma}
Under the same settings as in Lemma \ref{coeff_bound},
let $\bar{x}_{\hat{c}^*, \hat{x}}$, $\bar{x}_{{c}^*, \hat{x}}$ be computed from \eqref{mfd_recursive_average} using $\{ (\hat{c}^*_i, \hat{x}_i) \}_{i=0}^k$ and $\{ ({c}^*_i, \hat{x}_i) \}_{i=0}^k$ respectively. Then,
$$d(\bar{x}_{\hat{c}^*, \hat{x}}, \bar{x}_{c^*, \hat{x}}) \leq  \frac{C_1 }{\lambda (1-\sigma)} \Big(  \frac{2 d^2(x_0, x^*)}{1-\sigma} \psi +  d(x_0, x^*) (\psi)^2 \Big) \| \hat{c}^*\|_2 + \epsilon_2,$$
for some $\psi = O(d^2(x_0, x^*)), \epsilon_2 = O(d^3(x_0, x^*))$. 
\end{lemma}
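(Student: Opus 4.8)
I would prove this by moving the entire comparison into the single tangent space $T_{x^*}\M$, where both Riemannian averages become ordinary linear combinations of the lifted iterates $\Delta_{\hat{x}_i}$ up to a controlled curvature error, and then reducing the bound to the coefficient deviation $\|c^*-\hat{c}^*\|_2$ already estimated in Lemma~\ref{coeff_bound}. First I would invoke the lower bound in Lemma~\ref{lemma_metric_distort}(2) with basepoint $z = x^*$, which gives
\[
    d(\bar{x}_{\hat{c}^*, \hat{x}}, \bar{x}_{c^*, \hat{x}}) \leq C_1 \big\| \Delta_{\bar{x}_{\hat{c}^*, \hat{x}}} - \Delta_{\bar{x}_{c^*, \hat{x}}} \big\|_{x^*},
\]
so it suffices to bound the displacement of the two averages after applying $\Exp^{-1}_{x^*}$.

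Next I would apply Lemma~\ref{lemma_deviation_recur_tange_avera} to each average separately. Since both $\hat{c}^*$ and $c^*$ satisfy the constraint $c^\top 1 = 1$, the lemma yields $\Delta_{\bar{x}_{\hat{c}^*, \hat{x}}} = \sum_{i=0}^k \hat{c}^*_i \Delta_{\hat{x}_i} + e_1$ and $\Delta_{\bar{x}_{c^*, \hat{x}}} = \sum_{i=0}^k c^*_i \Delta_{\hat{x}_i} + e_2$ with $\|e_1\|_{x^*},\|e_2\|_{x^*} = O(d^3(x_0, x^*))$. Subtracting, the leading part is the tangent-space sum $\sum_{i=0}^k (\hat{c}^*_i - c^*_i)\Delta_{\hat{x}_i}$, while the two cubic residuals collapse into a single term $\epsilon_2 := C_1\|e_1 - e_2\|_{x^*} = O(d^3(x_0, x^*))$ carried through to the final bound. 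For the leading sum I would use the triangle inequality followed by Cauchy--Schwarz on the scalar coefficients,
\[
    \Big\| \sum_{i=0}^k (\hat{c}^*_i - c^*_i)\Delta_{\hat{x}_i} \Big\|_{x^*} \leq \sum_{i=0}^k |\hat{c}^*_i - c^*_i|\, \|\Delta_{\hat{x}_i}\|_{x^*} \leq \|c^* - \hat{c}^*\|_2 \, \Big( \sum_{i=0}^k \|\Delta_{\hat{x}_i}\|_{x^*}^2 \Big)^{1/2}.
\]

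The remaining factor is controlled by the contraction property of the linearized recursion~\eqref{lin_iter_mfd}: with initialization $\hat{x}_0 = x_0$ one has $\Delta_{\hat{x}_i} = G^i[\Delta_{x_0}]$ and $\|G\|_{x^*} \leq \sigma < 1$, hence $\|\Delta_{\hat{x}_i}\|_{x^*} \leq \sigma^i d(x_0, x^*)$, and summing the geometric series gives $\big(\sum_i \|\Delta_{\hat{x}_i}\|_{x^*}^2\big)^{1/2} \leq d(x_0,x^*)/\sqrt{1-\sigma^2} \leq d(x_0,x^*)/(1-\sigma)$. Substituting the estimate $\|c^* - \hat{c}^*\|_2 \leq \lambda^{-1}\big(\tfrac{2 d(x_0,x^*)}{1-\sigma}\psi + \psi^2\big)\|\hat{c}^*\|_2$ from Lemma~\ref{coeff_bound} and multiplying through by the factor $C_1$ from the first step collects exactly the claimed prefactor $\frac{C_1}{\lambda(1-\sigma)}\big(\frac{2 d^2(x_0,x^*)}{1-\sigma}\psi + d(x_0,x^*)\psi^2\big)\|\hat{c}^*\|_2$, with $\epsilon_2$ added.

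I expect the argument to be essentially mechanical once the two supporting lemmas are in hand; the only points requiring genuine care are bookkeeping in nature. One must make sure the metric-distortion inequality is invoked in the correct direction (the lower bound, so the controlling constant is $C_1$ rather than $C_2$), and one must confirm that the two $O(d^3(x_0,x^*))$ deviations from Lemma~\ref{lemma_deviation_recur_tange_avera} are of uniform cubic order so they can legitimately be absorbed into a single $\epsilon_2$ without interacting with the $\sigma$-geometric sum. The emergence of the exact $1/(1-\sigma)$ factor — as opposed to the slightly sharper $1/\sqrt{1-\sigma^2}$ — is the only place where a deliberate (harmless) relaxation is made, and it is what makes the constant match the statement cleanly.
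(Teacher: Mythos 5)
Your proposal is correct and follows essentially the same route as the paper's proof: reduce to the tangent space at $x^*$ via Lemma~\ref{lemma_metric_distort}, replace each Riemannian average by the corresponding linear combination plus an $O(d^3(x_0,x^*))$ error via Lemma~\ref{lemma_deviation_recur_tange_avera}, apply Cauchy--Schwarz together with the geometric decay $\|\Delta_{\hat{x}_i}\|_{x^*}\leq \sigma^i d(x_0,x^*)$, and conclude with the coefficient-deviation bound from Lemma~\ref{coeff_bound}. The only cosmetic difference is that you sum the squared geometric series (getting $1/\sqrt{1-\sigma^2}$ before relaxing to $1/(1-\sigma)$) whereas the paper first relaxes the $\ell_2$ sum to an $\ell_1$ sum; both yield the stated constant.
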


\paragraph{Error bound from nonlinearity.}
Next, we show that the nonlinearity term can be bounded in Lemma \ref{error_nonlinear}, which follows a similar idea of linearization on a fixed tangent space. Additional error $\epsilon_3$ is again due to the curvature of the manifold, which  vanishes in the case $\M$ is the Euclidean space.

\begin{lemma}[Error from the nonlinearity]
\label{error_nonlinear}
Under the same settings as in Lemma \ref{coeff_bound}, we have
$$d(\bar{x}_{c^*, \hat{x}}, \bar{x}_{c^*, x}) \leq C_1\sqrt{\frac{ \sum_{i=0}^k d^2(x_i, x_{i+1}) + \lambda }{(k+1) \lambda } } \Big( \sum_{i=0}^k \sum_{j=0}^i \| \varepsilon_j \|_{x^*} \Big) + \epsilon_3,$$
where $\|\varepsilon_j\|_{x^*} = O(d^2(x_j, x^*))$ is defined in Lemma \ref{lin_iter_approx_lemma} and $\epsilon_3 = O(d^3(x_0, x^*))$.
\end{lemma}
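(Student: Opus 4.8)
The plan is to push both weighted averages onto the tangent space at $x^*$ via the inverse exponential map $\Delta_{\,\cdot\,} = \Exp_{x^*}^{-1}(\cdot)$, where the comparison becomes essentially linear. First I would invoke Lemma~\ref{lemma_metric_distort}(2) with $z = x^*$ to convert the Riemannian distance into a tangent-space difference, obtaining $d(\bar{x}_{c^*,\hat{x}}, \bar{x}_{c^*,x}) \leq C_1 \| \Delta_{\bar{x}_{c^*,\hat{x}}} - \Delta_{\bar{x}_{c^*,x}} \|_{x^*}$. Then, applying Lemma~\ref{lemma_deviation_recur_tange_avera} to each average (both use the same coefficients $c^*$) linearizes the manifold recursion \eqref{mfd_recursive_average}, giving $\Delta_{\bar{x}_{c^*,\hat{x}}} = \sum_{i=0}^k c_i^* \Delta_{\hat{x}_i} + e_1$ and $\Delta_{\bar{x}_{c^*,x}} = \sum_{i=0}^k c_i^* \Delta_{x_i} + e_2$ with $\|e_1\|_{x^*}, \|e_2\|_{x^*} = O(d^3(x_0,x^*))$. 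Subtracting collapses the problem to controlling $\big\| \sum_{i=0}^k c_i^* (\Delta_{\hat{x}_i} - \Delta_{x_i}) \big\|_{x^*}$, with the curvature remainder $C_1\|e_1 - e_2\|_{x^*}$ absorbed into the stated $\epsilon_3 = O(d^3(x_0,x^*))$.

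The heart of the argument is a recursion for the per-iterate deviation $\delta_i \coloneqq \Delta_{x_i} - \Delta_{\hat{x}_i}$. Subtracting the linear progression \eqref{lin_iter_mfd}, $\Delta_{\hat{x}_i} = G[\Delta_{\hat{x}_{i-1}}]$, from the Riemannian gradient descent linearization of Lemma~\ref{lin_iter_approx_lemma}, $\Delta_{x_i} = G[\Delta_{x_{i-1}}] + \varepsilon_i$, yields $\delta_i = G[\delta_{i-1}] + \varepsilon_i$ with $\delta_0 = 0$ (using $\hat{x}_0 = x_0$ together with $\varepsilon_0 = 0$). Unrolling gives $\delta_i = \sum_{j=0}^i G^{\,i-j}[\varepsilon_j]$, and since $G = \id - \eta\,\hess f(x^*)$ is self-adjoint positive semidefinite with $\|G\|_{x^*} \leq \sigma < 1$, the operator bound $\|G^{\,i-j}\|_{x^*} \leq \sigma^{\,i-j} \leq 1$ delivers $\|\delta_i\|_{x^*} \leq \sum_{j=0}^i \|\varepsilon_j\|_{x^*}$.

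It then remains to assemble the bound. Using the triangle inequality and the coordinatewise estimate $|c_i^*| \leq \|c^*\|_2$, I would write $\big\| \sum_{i=0}^k c_i^* \delta_i \big\|_{x^*} \leq \|c^*\|_2 \sum_{i=0}^k \|\delta_i\|_{x^*} \leq \|c^*\|_2 \sum_{i=0}^k \sum_{j=0}^i \|\varepsilon_j\|_{x^*}$, which produces exactly the double sum in the statement. Substituting the norm bound $\|c^*\|_2 \leq \sqrt{(\sum_{i=0}^k d^2(x_i,x_{i+1}) + \lambda)/((k+1)\lambda)}$ from Lemma~\ref{coeff_bound} and multiplying through by the $C_1$ factor from the first step gives the claimed inequality, with $\|\varepsilon_j\|_{x^*} = O(d^2(x_j,x^*))$ again from Lemma~\ref{lin_iter_approx_lemma}.

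I expect the main obstacle to be bookkeeping rather than any single hard estimate: the curvature of the manifold forces every passage between the manifold and the tangent space $T_{x^*}\M$ to introduce an $O(d^3(x_0,x^*))$ remainder (via Lemma~\ref{lemma_deviation_recur_tange_avera}), and one must verify that all such remainders — from both averages and from the distortion in Lemma~\ref{lemma_metric_distort}(2) — are genuinely of lower order than the leading $O(d^2)$ nonlinearity term and can be consolidated into a single $\epsilon_3$. A secondary point of care is justifying that $\delta_0 = 0$, which hinges on aligning the initializations $\hat{x}_0 = x_0$ together with $\varepsilon_0 = 0$; without this the recursion would not telescope cleanly into the double sum.
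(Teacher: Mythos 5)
Your proposal is correct and follows essentially the same route as the paper's own proof: reduce to the tangent space at $x^*$ via Lemma~\ref{lemma_metric_distort}, linearize both averages with Lemma~\ref{lemma_deviation_recur_tange_avera} (absorbing the $O(d^3(x_0,x^*))$ remainders into $\epsilon_3$), unroll the deviation recursion $\gE_i = G[\gE_{i-1}] + \varepsilon_i$ with $\gE_0 = 0$, and finish with the $\|c^*\|_2$ bound from Lemma~\ref{coeff_bound}. The only cosmetic difference is that you bound $\|\sum_i c_i^*\gE_i\|_{x^*}$ by $\|c^*\|_2\sum_i\|\gE_i\|_{x^*}$ via $|c_i^*|\le\|c^*\|_2$ rather than via Cauchy--Schwarz as the paper does; both yield the same estimate.
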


Finally, we combine Lemma \ref{error_lin_term}, \ref{error_coeff_lemma} and \ref{error_nonlinear} to obtain the following convergence result for Algorithm \ref{Rna_algorithm} applying on the iterates generated from the Riemannian gradient descent (RGD).

\begin{theorem}[Convergence of RiemNA with RGD iterates]
\label{main_convergence_rna1_theorem}
Under Assumptions \ref{assump_normal_nei}, \ref{assump_regularity_f}, let $\{ x_i\}_{i=0}^k$ be given by the Riemannian gradient descent method, i.e., $x_{i+1} = \Exp_{x_{i}} ( -\eta \, \grad f(x_{i}) )$ and $\{\hat{x}_i\}_{i=0}^k$ be the linearized iterates satisfying $\Delta_{\hat{x}_i} = G[\Delta_{\hat{x}_{i-1}}]$ with $G = \id - \eta \, \hess f(x^*)$, satisfying $\| G \|_{x^*} \leq \sigma < 1$. Then, Algorithm \ref{Rna_algorithm} with regularization parameter $\lambda$ produces $\bar{x}_{c^*, x^*}$ that satisfies
\begin{align*}
    d(\bar{x}_{c^*, x}, x^*) &\leq d(x_0,x^*) \frac{S^{[0,\sigma]}_{k,\bar{\lambda}}}{1-\sigma} \sqrt{1 + \frac{C_1^2 d^2(x_0, x^*) \Big(  \frac{2 d(x_0, x^*)}{1-\sigma}  \psi + (\psi)^2 \Big)^2}{\lambda^3 } } \\
    &\quad + C_1\sqrt{\frac{ \sum_{i=0}^k d^2(x_i, x_{i+1}) + \lambda }{(k+1) \lambda } } \Big( \sum_{i=0}^k \sum_{j=0}^i \| \varepsilon_j \|_{x^*} \Big)  + \epsilon_1 + \epsilon_2 +  \epsilon_3,
\end{align*}
where $\psi = O(d^2(x_0, x^*)), \epsilon_1, \epsilon_2, \epsilon_3 = O(d^3(x_0, x^*))$ and $\varepsilon_i = O(d^2(x_i, x^*))$ is defined in Lemma \ref{lin_iter_approx_lemma}.
\end{theorem}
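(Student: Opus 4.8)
The plan is to start from the triangle-inequality decomposition displayed just before the statement,
\begin{equation*}
    d(\bar{x}_{c^*, x}, x^*) \leq  d(\bar{x}_{\hat{c}^*, \hat{x}}, x^*) + d(\bar{x}_{\hat{c}^*,\hat{x}}, \bar{x}_{c^*, \hat{x}}) + d(\bar{x}_{c^*, \hat{x}}, \bar{x}_{c^*, x}),
\end{equation*}
which is valid because $d$ is a genuine metric on the unique-geodesic neighbourhood $\gX$ of Assumption \ref{assump_normal_nei}. I would then substitute the three bounds already in hand: Lemma \ref{error_lin_term} for the linear term, Lemma \ref{error_coeff_lemma} for the stability term, and Lemma \ref{error_nonlinear} for the nonlinearity term. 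The nonlinearity contribution is \emph{already} exactly the second line of the claimed bound (up to the additive $\epsilon_3$), so no further manipulation is needed there; the entire content of the theorem lies in showing that the linear and stability contributions collapse into the single square-root factor on the first line.

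To do this, write $D = d(x_0,x^*)$, $S = S^{[0,\sigma]}_{k,\bar{\lambda}}$, and $P = \tfrac{2D}{1-\sigma}\psi + \psi^2$, and observe that the bracket appearing in Lemma \ref{error_coeff_lemma} factors as $\tfrac{2D^2}{1-\sigma}\psi + D\psi^2 = D\,P$. Ignoring the $\epsilon_i$ momentarily, the linear plus stability terms then read
\begin{equation*}
    \frac{D}{1-\sigma}\left( \sqrt{\,S^2 - \tfrac{\lambda}{D^2}\|\hat{c}^*\|_2^2\,} + \frac{C_1 P}{\lambda}\,\|\hat{c}^*\|_2 \right).
\end{equation*}
The key step is to recognise the parenthesised expression as a Cauchy--Schwarz pairing: applying the inequality to the vectors $\bigl(\sqrt{S^2 - \tfrac{\lambda}{D^2}\|\hat{c}^*\|_2^2},\ \sqrt{\tfrac{\lambda}{D^2}}\,\|\hat{c}^*\|_2\bigr)$ and $\bigl(1,\ \tfrac{C_1 P}{\lambda}\sqrt{\tfrac{D^2}{\lambda}}\bigr)$ cancels the $\|\hat{c}^*\|_2^2$ contributions and gives
\begin{equation*}
    \sqrt{\,S^2 - \tfrac{\lambda}{D^2}\|\hat{c}^*\|_2^2\,} + \frac{C_1 P}{\lambda}\,\|\hat{c}^*\|_2 \leq S\sqrt{1 + \frac{C_1^2 D^2 P^2}{\lambda^3}}.
\end{equation*}
Multiplying through by $\tfrac{D}{1-\sigma}$ and unfolding $P$ reproduces precisely the first line of the theorem. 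I expect this Cauchy--Schwarz maneuver to be the crux of the argument: the subtracted $\tfrac{\lambda}{D^2}\|\hat{c}^*\|_2^2$ inside the linear-term square root of Lemma \ref{error_lin_term} is engineered exactly so that the unknown $\|\hat{c}^*\|_2$ (which we cannot evaluate directly) cancels against its linear appearance in the stability bound, leaving an estimate that is free of $\|\hat{c}^*\|_2$.

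Finally I would collect the three additive remainders $\epsilon_1,\epsilon_2,\epsilon_3$, each $O(d^3(x_0,x^*))$ by the cited lemmas, and record the orders $\psi = O(d^2(x_0,x^*))$ from Lemma \ref{coeff_bound} and $\|\varepsilon_i\|_{x^*} = O(d^2(x_i,x^*))$ from Lemma \ref{lin_iter_approx_lemma}. The one bookkeeping point worth verifying is that the Cauchy--Schwarz step is legitimate, i.e. that the argument of the linear-term square root remains nonnegative so that $\|\hat{c}^*\|_2$ is indeed controlled by $S$; this is immediate from the definition of $S^{[0,\sigma]}_{k,\bar{\lambda}}$ as the optimal value of the regularized Chebyshev problem, which by construction dominates the $\lambda$-weighted coefficient norm. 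Assembling these pieces completes the proof.
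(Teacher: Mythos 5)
Your proposal is correct and follows essentially the same route as the paper: the same triangle-inequality decomposition, substitution of Lemmas \ref{error_lin_term}, \ref{error_coeff_lemma} and \ref{error_nonlinear}, and then elimination of the unknown $\|\hat{c}^*\|_2$ from the combined linear-plus-stability contribution. The paper performs that last step by maximizing $c\sqrt{a-\bar{\lambda}x^2}+bx$ over $x=\|\hat{c}^*\|_2$ (citing \cite[Proposition A.1]{scieur2020regularized}), which is exactly the Cauchy--Schwarz pairing you carry out explicitly, so the two arguments coincide.
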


We next show that even with additional distortion from the curved geometry of the manifold, the asymptotic optimal convergence is still guaranteed. 
This is mainly due to the fact that all errors incurred by the metric distortion, i.e., $\epsilon_1, \epsilon_2, \epsilon_3$ are on the order of at least $O(d^2(x_0, x^*))$, which is primarily attributed to Lemma \ref{lemma_deviation_recur_tange_avera}.

\begin{proposition}[Asymptotic optimal convergence rate of RiemNA with RGD iterates]
\label{asymp_optimal_convergence_prop}
Under the same settings as in Theorem \ref{main_convergence_rna1_theorem}, set $\lambda = O(d^s(x_0, x^*))$ for $s \in (2, \frac{8}{3})$. Then 
\begin{equation*}
    \lim_{d(x_0, x^*) \xrightarrow{} 0}  \frac{d(\bar{x}_{c^*, x}, x^* )}{d(x_0, x^*)} \leq \frac{1}{1-\sigma} \frac{2 }{\beta^{-k} + \beta^k}, \qquad \beta = \frac{1 - \sqrt{1-\sigma}}{1 + \sqrt{1-\sigma}}.
\end{equation*}
\end{proposition}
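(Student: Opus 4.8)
The plan is to start from the fully explicit, non-asymptotic bound of Theorem~\ref{main_convergence_rna1_theorem}, divide both sides by $d(x_0,x^*)$, and control the limit as $d(x_0,x^*)\to 0$ term by term. Writing $\rho = d(x_0,x^*)$ for brevity and recalling $\bar\lambda = \lambda/\rho^2$, the prescribed scaling $\lambda = O(\rho^s)$ gives $\bar\lambda = O(\rho^{s-2})$, so the requirement $s>2$ is exactly what forces $\bar\lambda\to 0$. There are three groups to handle: the leading (linear/Chebyshev) term, the nonlinearity term, and the curvature remainders $\epsilon_1+\epsilon_2+\epsilon_3$.

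For the leading term, after dividing by $\rho$ it reads $\frac{S^{[0,\sigma]}_{k,\bar\lambda}}{1-\sigma}\sqrt{1 + C_1^2\rho^2(\tfrac{2\rho}{1-\sigma}\psi+\psi^2)^2/\lambda^3}$. I would first argue the square-root factor tends to $1$: substituting $\psi = O(\rho^2)$ gives $\tfrac{2\rho}{1-\sigma}\psi+\psi^2 = O(\rho^3)$, hence $\rho^2(\cdots)^2 = O(\rho^8)$, and dividing by $\lambda^3 = O(\rho^{3s})$ the correction is $O(\rho^{8-3s})$, which vanishes precisely when $s<\tfrac{8}{3}$; together with $s>2$ this pins down the admissible window. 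It then remains to show $S^{[0,\sigma]}_{k,\bar\lambda}\to S^{[0,\sigma]}_{k,0}=\frac{2}{\beta^{-k}+\beta^k}$. The convergence as $\bar\lambda\to 0$ follows from the definition of the regularized Chebyshev value: for any fixed admissible $p$ one has $S^{[0,\sigma]}_{k,\bar\lambda}\le\sqrt{\max_{x\in[0,\sigma]}p^2(x) + \bar\lambda\|p\|_2^2}$, so taking $p = C^{[0,\sigma]}_{k,0}$ yields $\limsup_{\bar\lambda\to 0}S^{[0,\sigma]}_{k,\bar\lambda}\le S^{[0,\sigma]}_{k,0}$, which is all an upper bound on the limit requires. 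Finally the identity $S^{[0,\sigma]}_{k,0}=\min_{p\in\gP_k^1}\max_{x\in[0,\sigma]}|p(x)| = \frac{2}{\beta^{-k}+\beta^k}$ is the classical Chebyshev minimax computation: mapping $[0,\sigma]$ onto $[-1,1]$ by $x\mapsto \tfrac{2x}{\sigma}-1$, the extremal polynomial is $T_k(\tfrac{2x}{\sigma}-1)/T_k(\tfrac{2-\sigma}{\sigma})$, so the minimax value is $1/T_k(\tfrac{2-\sigma}{\sigma})$; writing $\tfrac{2-\sigma}{\sigma}=\cosh\theta$ gives $T_k(\cosh\theta)=\cosh(k\theta)$, and a short check confirms $\cosh\theta=\tfrac12(\beta+\beta^{-1})$ for $\beta=\frac{1-\sqrt{1-\sigma}}{1+\sqrt{1-\sigma}}$, whence $1/\cosh(k\theta)=\frac{2}{\beta^{-k}+\beta^k}$.

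For the nonlinearity term, after dividing by $\rho$ it is $\frac{C_1}{\rho}\sqrt{\frac{\sum_i d^2(x_i,x_{i+1})+\lambda}{(k+1)\lambda}}\big(\sum_i\sum_j\|\varepsilon_j\|_{x^*}\big)$. Here I would use the non-divergence in Assumption~\ref{assump_normal_nei} to get $d(x_i,x^*)=O(\rho)$, hence $\|\varepsilon_j\|_{x^*}=O(\rho^2)$ by Lemma~\ref{lin_iter_approx_lemma} and the finite double sum is $O(\rho^2)$; moreover, since $x^*$ is critical and $f$ has Lipschitz gradient, $d(x_i,x_{i+1})=\eta\|\grad f(x_i)\|_{x_i}=O(\rho)$, so $\sum_i d^2(x_i,x_{i+1})=O(\rho^2)$ dominates $\lambda=O(\rho^s)$ for $s>2$. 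The square-root factor (which is exactly the coefficient bound of Lemma~\ref{coeff_bound}) is then $O(\rho^{(2-s)/2})$, and multiplying by $\rho^{-1}$ and $O(\rho^2)$ gives overall order $O(\rho^{2-s/2})$, which vanishes since $s<\tfrac83<4$. The remainder group contributes $(\epsilon_1+\epsilon_2+\epsilon_3)/\rho = O(\rho^2)\to 0$ directly from $\epsilon_i=O(\rho^3)$.

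Combining, every term except the leading Chebyshev factor vanishes and the leading factor converges to $\frac{1}{1-\sigma}\frac{2}{\beta^{-k}+\beta^k}$, giving the claim. The main obstacle I expect is the exponent bookkeeping rather than any single hard estimate: the interval $s\in(2,\tfrac83)$ is exactly the intersection of two competing scalings --- $s>2$ so that $\bar\lambda\to0$ and the regularized Chebyshev value relaxes to the true minimax, and $s<\tfrac83$ so that the stability correction $O(\rho^{8-3s})$ inside the square root does not survive. A secondary delicate point is that the coefficient norm $\|c^*\|_2$ itself diverges like $\rho^{(2-s)/2}$ as $\rho\to0$, so one must track it jointly with the accompanying $O(\rho^2)$ nonlinearity factor to see their product vanish, rather than bounding the two factors separately.
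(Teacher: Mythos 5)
Your proposal is correct and follows essentially the same route as the paper: divide the bound of Theorem~\ref{main_convergence_rna1_theorem} by $d(x_0,x^*)$, verify via the exponent bookkeeping that the stability correction $O(d^{8-3s})$, the nonlinearity term $O(d^{(4-s)/2})$, and the curvature remainders $(\epsilon_1+\epsilon_2+\epsilon_3)/d$ all vanish for $s\in(2,\tfrac83)$, and identify the surviving limit with $S^{[0,\sigma]}_{k,0}/(1-\sigma)$. The only difference is that you spell out the limit $S^{[0,\sigma]}_{k,\bar\lambda}\to S^{[0,\sigma]}_{k,0}$ and the classical Chebyshev minimax identity $S^{[0,\sigma]}_{k,0}=2/(\beta^{-k}+\beta^k)$ explicitly, whereas the paper delegates this to a citation; both computations check out.
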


\begin{remark}
The asymptotic optimal convergence rate holds as long as $\epsilon_1, \epsilon_2, \epsilon_3$ are on the order of at least $O(d^2(x_0, x^*))$ such that $\lim_{d(x_0, x^*) \xrightarrow{} 0} \frac{1}{d(x_0, x^*)} (\epsilon_1 + \epsilon_2 + \epsilon_3) = 0$.
\end{remark}

\begin{remark}
Suppose at the local minimizer, we have $0\prec \mu \, \id \preceq \hess f(x^*) \preceq L \, \id$, which is satisfied beyond geodesic $\mu$-strongly convex and geodesic $L$-smooth function. Then, we see that by choosing $\eta = \frac{1}{L}$, $\sigma = 1 - \mu/L$. This corresponds to the optimal convergence rate obtained by Nesterov acceleration \cite{nesterov2003introductory} and its Riemannian extensions such as \cite{liu2017accelerated,ahn2020nesterov,kim2022accelerated} for (geodesic) strongly convex functions.
\end{remark}

\section{Alternative averaging schemes on manifolds}\label{sec:alternative_averaging}

In this section, we propose alternative averaging schemes on manifolds used for extrapolation. For the iterates obtained from the Riemannian gradient descent method, we show the schemes ensure the same asymptotically optimal convergence rate obtained in Proposition \ref{asymp_optimal_convergence_prop}.

The first scheme we consider is based on the following equality in the Euclidean space for the weighted mean, i.e.,
\begin{equation*}
    \sum_{i=0}^k c_i x_i = x_k -  (\sum_{i=0}^{k-1} c_i) (x_k - x_{k-1}) - (\sum_{i=0}^{k-2} c_i) (x_{k-1} - x_{k-2}) - \cdots - c_0 (x_1 - x_0).
\end{equation*}
Accordingly, let $\theta_i = \sum_{j=0}^i c_j, i= 0,...,k-1$. We define an alternative weighted averaging as 
\begin{equation}
    \bar{x}_{c, x} = \Exp_{x_k}\Big( - \sum_{i=0}^{k-1} \theta_i \Gamma_{x_i}^{x_k} \Exp_{x_i}^{-1}(x_{i+1}) \Big). \label{averg_mfd_alternative} \tag{Avg.2}
\end{equation}
Based on the earlier analysis, to show the convergence of $\bar{x}_{c,x}$ defined in \eqref{averg_mfd_alternative}, we only require to show that Lemma \ref{lemma_deviation_recur_tange_avera} holds for the new scheme, with an error of order at least $O(d^2(x_0, x^*))$. We formalize this claim in the next lemma and show the error is on the order of $O(d^3(x_0, x^*))$.

\begin{lemma}
\label{lemma_alternative_aver_deviation}
Under Assumption \ref{assump_normal_nei}, for some coefficients $\{c_i\}_{i=0}^k$ with $\sum_{i=0}^k c_i = 1$ and iterates $\{x_i\}_{i=0}^k$, consider $\bar{x}_{c, x} = \Exp_{x_k}\big( - \sum_{i=0}^{k-1} \theta_i \Gamma_{x_i}^{x_k} \Exp_{x_i}^{-1}(x_{i+1}) \big)$, $\theta_i = \sum_{j=0}^i c_j$. Then, we have $\|\Delta_{\bar{x}_{c, x}} - \sum_{i = 0}^k c_i \Delta_{x_i} \|_{x^*} = O(d^3(x_0, x^*))$.
\end{lemma}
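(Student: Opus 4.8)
The plan is to push the entire construction into the tangent space $T_{x^*}\M$ and track the curvature-induced error through three successive applications of the metric distortion bounds of Lemmas \ref{lemma_move_vectorsum} and \ref{lemma_metric_distort}, paralleling the argument behind Lemma \ref{lemma_deviation_recur_tange_avera} but exploiting that \eqref{averg_mfd_alternative} is a single, non-recursive exponential step. Throughout, write $r = d(x_0,x^*)$; by Assumption \ref{assump_normal_nei} every $d(x_i,x^*) = O(r)$ and hence $d(x_i,x_{i+1}) = O(r)$, while $k$ and the partial sums $\theta_i$ are treated as fixed $O(1)$ constants (as in Lemma \ref{lemma_deviation_recur_tange_avera}). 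Set $v_i := \Exp_{x_i}^{-1}(x_{i+1})$ and $w := -\sum_{i=0}^{k-1}\theta_i \Gamma_{x_i}^{x_k} v_i \in T_{x_k}\M$, so that $\bar{x}_{c,x} = \Exp_{x_k}(w)$ and $\|w\|_{x_k} = O(r)$.

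First I would relate the outer exponential step at $x_k$ to a step at $x^*$. Applying Lemma \ref{lemma_move_vectorsum} with base $x^*$, $u = \Delta_{x_k}$ (so $y = x_k$) and $v = \Gamma_{x_k}^{x^*} w$, and using $\Gamma_{x^*}^{x_k}\Gamma_{x_k}^{x^*} = \id$, gives $d(\Exp_{x^*}(\Delta_{x_k}+\Gamma_{x_k}^{x^*}w), \bar{x}_{c,x}) \leq \min\{\|\Delta_{x_k}\|,\|w\|\}\,C_\kappa(\|\Delta_{x_k}\|+\|w\|) = O(r)\cdot O(r^2) = O(r^3)$, since $C_\kappa(s) = O(s^2)$ for small $s$. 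Converting this distance bound into a tangent-space bound at $x^*$ via Lemma \ref{lemma_metric_distort}(2) yields $\Delta_{\bar{x}_{c,x}} = \Delta_{x_k} + \Gamma_{x_k}^{x^*} w + O(r^3)$.

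Next I would simplify $\Gamma_{x_k}^{x^*} w = -\sum_{i=0}^{k-1}\theta_i \Gamma_{x_k}^{x^*}\Gamma_{x_i}^{x_k} v_i$ in two moves. Lemma \ref{lemma_metric_distort}(1) replaces each composed transport by the direct one at a cost $\|\Gamma_{x_k}^{x^*}\Gamma_{x_i}^{x_k}v_i - \Gamma_{x_i}^{x^*}v_i\|_{x^*} \leq C_0\, d(x_i,x_k)d(x_k,x^*)\|v_i\|_{x_i} = O(r^3)$. Then the key one-step estimate identifies each transported residual with a tangent-space difference: applying Lemma \ref{lemma_move_vectorsum} with base $x^*$, $u = \Delta_{x_i}$, $v = \Gamma_{x_i}^{x^*}v_i$ (so that $\Exp_{x_i}(v_i) = x_{i+1}$) and then Lemma \ref{lemma_metric_distort}(2) gives $\Gamma_{x_i}^{x^*}v_i = \Delta_{x_{i+1}} - \Delta_{x_i} + O(r^3)$. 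Summing the $O(r^3)$ errors over the fixed range of $i$ preserves the order, so $\Gamma_{x_k}^{x^*}w = -\sum_{i=0}^{k-1}\theta_i(\Delta_{x_{i+1}}-\Delta_{x_i}) + O(r^3)$.

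Finally, substituting into the expression for $\Delta_{\bar{x}_{c,x}}$ and invoking the purely algebraic Abel-summation identity $\Delta_{x_k} - \sum_{i=0}^{k-1}\theta_i(\Delta_{x_{i+1}}-\Delta_{x_i}) = \sum_{i=0}^k c_i \Delta_{x_i}$ (which holds in the vector space $T_{x^*}\M$ using $\theta_i = \sum_{j\le i}c_j$ and $\sum_i c_i = 1$) collapses the telescoping sum and yields $\Delta_{\bar{x}_{c,x}} = \sum_{i=0}^k c_i \Delta_{x_i} + O(r^3)$, which is the claim. The main obstacle is verifying that each of the three distortion steps genuinely contributes at order $O(r^3)$ rather than $O(r^2)$: this hinges on $C_\kappa(s) = O(s^2)$ together with the $\min\{\cdot\}$ factor in Lemma \ref{lemma_move_vectorsum}, and crucially on both endpoints $x_i, x_{i+1}$ lying within $O(r)$ of $x^*$ so that every vector entering each lemma has norm $O(r)$; Assumption \ref{assump_normal_nei} (non-divergence) is exactly what secures this uniformly in $i$.
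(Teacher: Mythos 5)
Your proposal is correct and follows essentially the same route as the paper: the Abel-summation identity $\sum_i c_i\Delta_{x_i} = \Delta_{x_k} - \sum_{i=0}^{k-1}\theta_i(\Delta_{x_{i+1}}-\Delta_{x_i})$, one application of Lemma \ref{lemma_move_vectorsum} to move the outer exponential step from $x_k$ to $x^*$, and the combination of Lemma \ref{lemma_metric_distort}(1) with the one-step estimate $\Gamma_{x_i}^{x^*}\Exp_{x_i}^{-1}(x_{i+1}) = \Delta_{x_{i+1}}-\Delta_{x_i}+O(d^3(x_0,x^*))$ — which is exactly the paper's Lemma \ref{diff_bound_gammatransport}, re-derived inline. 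The only difference is presentational (you push everything into $T_{x^*}\M$ early and add errors there, while the paper triangulates distances between manifold points and converts to a tangent norm at the end), and the order counting via $C_\kappa(s)=O(s^2)$ matches.
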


In addition, we may consider the weighted Fr\'echet mean, which can also be used in place of the two aforementioned averaging schemes. The Fr\'echet mean is defined as the solution to the following optimization problem
\begin{equation}
    \bar{x}_{c,x} = \argmin_{x \in \gX} \sum_{i=0}^k c_i d^2(x, x_i).   \label{frechet_mean} \tag{Avg.3}
\end{equation}
Nevertheless, for general manifolds, it is not guaranteed the existence and uniqueness of the solution. In fact, one can ensure the uniqueness of the solution when the function $\frac{1}{2}d^2(x, x')$ is geodesic $\tau$-strongly convex in $x$. From \cite[Lemma 2]{alimisis2020continuous}, we see that the geodesic strong convexity of problem \eqref{frechet_mean} holds for sufficiently small $\gX$ on any manifold as well as for any non-positively curved manifold. Specifically, when $\M$ is non-positively curved, we have $\tau = 1$. While for other manifolds, let $D$ be the diameter of $\gX$ and $\kappa^+ >0$ be the upper curvature bound. Then, geodesic strong convexity is satisfied with $\tau < 1$ when $D < \frac{\pi}{2 \sqrt{\kappa^+}}$.

\begin{lemma}
\label{lemma_frechetmean_bound}
Under Assumption \ref{assump_normal_nei}, suppose $x \mapsto \frac{1}{2} d^2(x, x')$ is geodesic $\tau$-strongly convex in $x$ for any $x'\in \gX$. Consider $\bar{x}_{c,x} = \argmin_{x \in \gX} \sum_{i=0}^k c_i d^2(x, x_i)$. Then $d(\bar{x}_{c, x} , x^*) \leq \tau \| \sum_{i=0}^k c_i \Delta_{x_i} \|_{x^*}$ and $\|\Delta_{\bar{x}_{c, x}} - \sum_{i = 0}^k c_i \Delta_{x_i} \|_{x^*} = O(d^3(x_0, x^*))$.
\end{lemma}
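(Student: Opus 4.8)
The plan is to prove the two assertions separately, both built on the first-order optimality condition of the weighted Fréchet problem. Write $\bar{x}\equiv\bar{x}_{c,x}$ and let $\phi(x)=\sum_{i=0}^k c_i\,\tfrac12 d^2(x,x_i)$, so that $\bar{x}$ also minimizes $\phi$. Since $\grad_x\tfrac12 d^2(x,x_i)=-\Exp_x^{-1}(x_i)$, stationarity of $\bar{x}$ reads $\sum_{i=0}^k c_i\,\Exp_{\bar{x}}^{-1}(x_i)=0$, and at the optimum $x^*$ we have $\grad\phi(x^*)=-\sum_{i=0}^k c_i\,\Delta_{x_i}$. These two facts are the engine for both parts: the distance bound follows from strong convexity, while the tangent-space approximation follows from transporting the optimality condition to $T_{x^*}\M$ after a sharp expansion of the inverse exponential map.

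For the distance bound I would invoke that $\phi$ is geodesically $\tau$-strongly convex, which it inherits from the hypothesis that each $\tfrac12 d^2(\cdot,x_i)$ is $\tau$-strongly convex together with $\sum_i c_i=1$. Strong convexity gives the quadratic-growth inequality $\tfrac{\tau}{2}\,d^2(\bar{x},x^*)\le \phi(x^*)-\phi(\bar{x})\le -\langle \grad\phi(x^*),\Exp_{x^*}^{-1}(\bar{x})\rangle_{x^*}$; bounding the right-hand side by Cauchy--Schwarz and substituting $\grad\phi(x^*)=-\sum_i c_i\Delta_{x_i}$ leaves $\tfrac{\tau}{2}\,d(\bar{x},x^*)\le \|\sum_{i=0}^k c_i\Delta_{x_i}\|_{x^*}$, i.e.\ the asserted control of $d(\bar{x},x^*)$ by $\|\sum_i c_i\Delta_{x_i}\|_{x^*}$ with modulus $\tau$.

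For the tangent-space approximation the key estimate is the third-order comparison $\|\Gamma_{\bar{x}}^{x^*}\Exp_{\bar{x}}^{-1}(x_i)-(\Delta_{x_i}-\Delta_{\bar{x}})\|_{x^*}=O(d^3(x_0,x^*))$. To obtain it I would apply Lemma~\ref{lemma_move_vectorsum} with base point $x^*$, $u=\Delta_{\bar{x}}$ and $v=\Delta_{x_i}-\Delta_{\bar{x}}$, so that $\Exp_{x^*}(u)=\bar{x}$ and $\Exp_{x^*}(u+v)=x_i$; because $C_\kappa(r)=O(r^2)$ and every displacement is $O(d(x_0,x^*))$ by Assumption~\ref{assump_normal_nei}, the lemma bounds $d\big(x_i,\Exp_{\bar{x}}(\Gamma_{x^*}^{\bar{x}}v)\big)$ by $O(d^3(x_0,x^*))$. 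Converting this distance bound to a tangent-vector bound through Lemma~\ref{lemma_metric_distort}(2) and using that $\Gamma_{\bar{x}}^{x^*}$ is an isometry delivers the claimed comparison. Transporting the optimality condition to $T_{x^*}\M$ then gives $0=\sum_{i=0}^k c_i(\Delta_{x_i}-\Delta_{\bar{x}})+O(d^3(x_0,x^*))$, and since $\sum_i c_i=1$ this collapses to $\Delta_{\bar{x}}=\sum_{i=0}^k c_i\Delta_{x_i}+O(d^3(x_0,x^*))$, which is the second claim.

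The main obstacle is securing the \emph{cubic} rather than merely quadratic error in the inverse-exponential comparison: this is exactly where the exponential-map and parallel-transport structure is indispensable, since the curvature factor $C_\kappa(\cdot)=O(r^2)$ supplied by Lemma~\ref{lemma_move_vectorsum} upgrades the naive $O(d^2)$ bound to $O(d^3)$ (for a general retraction and vector transport only $O(d^2)$ would survive, as noted in the remark following Lemma~\ref{lemma_deviation_recur_tange_avera}). Two smaller points must be checked: the $O(d^3)$ errors are summed over $i$ with weights $c_i$, so I need $\sum_i|c_i|$ to remain bounded, which follows from the coefficient estimate in Lemma~\ref{coeff_bound}; and the strong-convexity step presumes $\phi$ is genuinely $\tau$-strongly convex, which I take from the standing hypothesis and the preceding discussion guaranteeing a unique Fréchet mean on $\gX$.
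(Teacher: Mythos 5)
Your proposal is correct and follows essentially the same route as the paper: the second claim is obtained exactly as in the paper's Lemma \ref{diff_bound_gammatransport}, by comparing $\Exp_{\bar{x}_{c,x}}^{-1}(x_i)$ with $\Gamma_{x^*}^{\bar{x}_{c,x}}(\Delta_{x_i}-\Delta_{\bar{x}_{c,x}})$ via Lemmas \ref{lemma_move_vectorsum} and \ref{lemma_metric_distort} and then transporting the stationarity condition $\sum_i c_i \Exp_{\bar{x}_{c,x}}^{-1}(x_i)=0$ to $T_{x^*}\M$, while for the first claim you use quadratic growth plus the gradient inequality where the paper restricts $D$ to the geodesic from $x^*$ to $\bar{x}_{c,x}$ and uses strong monotonicity of its derivative --- the same strong-convexity content, costing you only a factor of $2$ in the constant (and note that both derivations actually produce $\tau^{-1}$ rather than the $\tau$ printed in the statement). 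The one point to watch, which the paper also glosses over, is that $\sum_i c_i=1$ does not by itself make $\sum_i c_i\,\tfrac{1}{2} d^2(\cdot,x_i)$ geodesically $\tau$-strongly convex when some $c_i$ are negative, as the extrapolation weights can be.
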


Under the additional assumption of geodesic strong convexity, Lemma \ref{lemma_frechetmean_bound} shows an extra tighter bound on $d(\bar{x}_{c,x}, x^*)$, i.e., $d(\bar{x}_{c, x} , x^*) \leq \tau \| \sum_{i=0}^k c_i \Delta_{x_i} \|_{x^*}$. Thus, we see the error from the linear term does not suffer from metric distortion ($\epsilon_1 = 0$). The error bound from coefficient stability and nonlinearity terms however, still incur additional errors as the previous two averaging schemes.

Lemma \ref{lemma_alternative_aver_deviation} and \ref{lemma_frechetmean_bound} allows convergence under the two averaging schemes to be established by exactly following the same steps as before. This is sufficient to show the same convergence bound holds (i.e., Theorem \ref{main_convergence_rna1_theorem} and Proposition \ref{asymp_optimal_convergence_prop}).


\section{Convergence under general retraction and vector transport}
\label{retr_conv_sect}
It is noticed that nearly all studies on Riemannian accelerated gradient methods analyze convergence only under the exponential map and parallel transport. In this section, for the proposed Riemannian nonlinear acceleration on iterates from Riemannian gradient descent, we highlight that similar analysis and the same convergence bounds can be derived for general retraction and vector transport under additional assumptions. The main assumptions include bounding the deviation between retraction and exponential map as well as between vector transport and parallel transport. In addition, the Lipschitz gradient and Hessian need to be compatible with retraction and vector transport (defined in Appendix \ref{appendix_retr_lipschitz}). 

\begin{assumption}
\label{basic_retr_neighb_assump}
The neighbourhood $\gX$ is totally retractive where retraction has a smooth inverse. Function $f$ has retraction Lipschitz gradient and Lipschitz Hessian. 
\end{assumption}


\begin{assumption}
\label{retr_exp_bound}
There exists constants $a_0, a_1, a_2, \delta_{a_0, a_1} > 0$ such that for all $x, y, z \in \gX$, $\|\Retr^{-1}_x(y) \|_x \allowbreak \leq \delta_{a_0, a_1}$, we have
\begin{enumerate}[(1).]
    \item $a_0 d(x,y) \leq \| \Retr_x^{-1}(y) \|_x \leq a_1 d(x,y)$.
    
    \item $\| \Exp_{x}^{-1}(z) - \Retr_x^{-1}(z) \|_x \leq a_2 \| \Retr_x^{-1}(z) \|^2_x$.
\end{enumerate}
\end{assumption}

\begin{assumption}
\label{vector_transport_bound}
The vector transport $\gT_x^y$ is isometric and there exists a constant $a_3 > 0$ such that for all $x, y \in \gX$,  $\| \gT_{x}^y u - \Gamma_x^y u \|_y \leq a_3 \| \Retr^{-1}_x(y) \|_x \| u\|_x$.
\end{assumption}

\begin{remark}
Assumption \ref{retr_exp_bound} is required to bound the deviation from the retraction to the exponential map, which can be considered natural given retraction approximates the exponential map to the first-order. In fact, Assumption \ref{retr_exp_bound} has been commonly used in \cite{sato2019riemannian,kasai2018riemannian,han2021improved} for analyzing Riemannian first-order algorithms using retraction and can be satisfied for a sufficiently small neighbourhood (see for example \cite{ring2012optimization,huang2015riemannian}). Similarly, Assumption \ref{vector_transport_bound} is used to bound the deviation between the vector transport to parallel transport, which is also standard in \cite{huang2015broyden,kasai2018riemannian,han2021improved}. One can follow the procedures in \cite{huang2015broyden} to construct isometric vector transport that satisfies such condition for common manifolds like SPD manifold \cite{huang2015broyden}, Stiefel and Grassmann manifold \cite{huang2013optimization}. 
\end{remark}

In this section, we only show convergence under the recursive weighted average computation for extrapolation, i.e.,
\begin{equation}
    \bar{x}_{c,x} = \tilde{x}_k, \qquad \tilde{x}_i =  \Retr_{\tilde{x}_{i-1}} \Big( \frac{c_i}{\sum_{j=0}^i c_j}  \Retr_{\tilde{x}_{i-1}}^{-1}(x_i) \Big) \label{retr_average}.
\end{equation}
Nevertheless, similar analysis can be also performed on the alternative two averaging schemes discussed in Section \ref{sec:alternative_averaging}.

The next theorem shows that using retraction and vector transport can also achieve the asymptotic optimal convergence rate. This proof follows similarly as the case for exponential map and parallel transport while using the Assumptions \ref{retr_exp_bound}, \ref{vector_transport_bound}. In particular, both these two assumptions ensure the deviations between retraction and exponential map, vector transport and parallel transport are on the order of $O(d^2(x_0, x^*))$. Thus, the additional error terms $\epsilon_1, \epsilon_2, \epsilon_3 = O(d^2(x_0, x^*))$.


\begin{theorem}[Convergence of RiemNA with RGD iterates under general retraction and vector transport]
\label{theorem_retr_convergence}
Under Assumption \ref{assump_normal_nei}, \ref{basic_retr_neighb_assump}, \ref{retr_exp_bound} and \ref{vector_transport_bound}, let $\{ x_i \}_{i=0}^k$ be given by Riemannian gradient descent via retraction, i.e., $x_i = \Retr_{x_{i-1}}(- \eta \, \grad f(x_{i-1}))$ and $\{ \hat{x}_{i}\}_{i=0}^k$ be the linearized iterates satisfying $\Retr_{x^*}^{-1}(\hat{x}_i) = G [\Retr_{x^*}^{-1}(\hat{x}_{i-1})]$ with $G = \id - \eta \, \hess f(x^*)$, satisfying $\| G \|_{x^*} \leq \sigma < 1$. Then, using retraction and vector transport in Algorithm \ref{Rna_algorithm}  and letting $\bar{x}_{c,x}$ be computed from \eqref{retr_average}, it satisfies that
\begin{align*}
    d(\bar{x}_{c^*, x}, x^*) &\leq  \|\Retr_{x^*}^{-1}(x_0) \|_{x^*} \frac{S^{[0,\sigma]}_{k, \bar{\lambda}}}{1-\sigma}  \sqrt{\frac{1}{a_0^2} +  \frac{C_1^2 \|\Retr_{x^*}^{-1}(x_0) \|_{x^*}^2 \big( \frac{2 \psi}{1-\sigma} \|\Retr_{x^*}^{-1}(x_0) \|_{x^*}  + \psi^2 \big)^2 }{\lambda^3}} \\
    &\qquad + C_1 \sqrt{\frac{\sum_{i=0}^k  \|\Retr_{x_i}^{-1}(x_{i+1}) \|_{x_i}^2 + \lambda}{(k+1)\lambda}}\Big( \sum_{i=0}^k \sum_{j=0}^i \| \varepsilon_j \|_{x^*} \Big)  +\epsilon_1 + \epsilon_2 +  \epsilon_3,
\end{align*}
where $\psi = O(d^2(x_0, x^*))$, $\epsilon_1, \epsilon_2, \epsilon_3 = O(d^2(x_0, x^*))$ and $\varepsilon_i = O(d^2(x_i, x^*))$.
Under the same choice of $\lambda = O(d^s(x_0, x^*))$, $s \in (2, \frac{8}{3})$, the same asymptotic optimal convergence rate (Proposition \ref{asymp_optimal_convergence_prop}) holds.
\end{theorem}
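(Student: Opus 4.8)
The plan is to replay the proof of Theorem~\ref{main_convergence_rna1_theorem} almost verbatim, with the exponential map and parallel transport systematically replaced by the retraction and vector transport, and with Assumptions~\ref{retr_exp_bound} and~\ref{vector_transport_bound} supplying the needed deviation bounds in place of the metric-distortion Lemmas~\ref{lemma_move_vectorsum} and~\ref{lemma_metric_distort}. The guiding observation is that both assumptions introduce only second-order deviations: Assumption~\ref{retr_exp_bound}(2) gives $\|\Exp_x^{-1}(z)-\Retr_x^{-1}(z)\|_x = O(\|\Retr_x^{-1}(z)\|_x^2)$ and Assumption~\ref{vector_transport_bound} gives $\|\gT_x^y u - \Gamma_x^y u\|_y = O(\|\Retr_x^{-1}(y)\|_x\,\|u\|_x)$, so every place where the exponential-map argument produced an $O(d^3(x_0,x^*))$ remainder, the retraction argument will produce an $O(d^2(x_0,x^*))$ remainder. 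Since the final asymptotic step only divides by $d(x_0,x^*)$ and sends it to zero, this weaker order is still negligible and the Chebyshev rate survives.

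First I would establish the retraction analogue of Lemma~\ref{lin_iter_approx_lemma}: that the RGD-via-retraction iterates $x_i=\Retr_{x_{i-1}}(-\eta\,\grad f(x_{i-1}))$ are locally linear in the chart $\Retr_{x^*}^{-1}$, i.e. $\Retr_{x^*}^{-1}(x_i)=G[\Retr_{x^*}^{-1}(x_{i-1})]+\varepsilon_i$ with $G=\id-\eta\,\hess f(x^*)$ and $\|\varepsilon_i\|_{x^*}=O(d^2(x_i,x^*))$. Here the retraction-Lipschitz gradient and Hessian of Assumption~\ref{basic_retr_neighb_assump} replace their geodesic counterparts, and Assumption~\ref{retr_exp_bound}(2) is used to pass between $\Retr_{x^*}^{-1}$ and $\Exp_{x^*}^{-1}$ at the cost of an $O(d^2)$ term, so that the Taylor expansion of the gradient step carried out in Lemma~\ref{lin_iter_approx_lemma} goes through unchanged up to this controlled error. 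This fixes the linearized iterates $\hat{x}_i$ appearing in the statement and the residuals $\varepsilon_i$ used throughout.

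The core technical step, and the main obstacle, is the retraction analogue of Lemma~\ref{lemma_deviation_recur_tange_avera}, namely Proposition~\ref{general_bounded_deviation_weighted_aver}: for the recursive average~\eqref{retr_average} one must show $\|\Retr_{x^*}^{-1}(\bar{x}_{c,x})-\sum_{i=0}^k c_i\,\Retr_{x^*}^{-1}(x_i)\|_{x^*}=O(d^2(x_0,x^*))$. The $O(d^3)$ bound of Lemma~\ref{lemma_deviation_recur_tange_avera} exploited the refined product structure of Lemmas~\ref{lemma_move_vectorsum} and~\ref{lemma_metric_distort}, which are unavailable for a general retraction; instead I would follow the strategy of \cite[Lemma 12]{tripuraneni2018averaging}, expanding each recursion step $\tilde{x}_i=\Retr_{\tilde{x}_{i-1}}(\tfrac{c_i}{\sum_j c_j}\Retr_{\tilde{x}_{i-1}}^{-1}(x_i))$ to first order in the chart at $x^*$ and absorbing the second-order retraction remainder, which by Assumption~\ref{retr_exp_bound} is quadratic in the displacements. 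Accumulating over the $k+1$ steps yields the $O(d^2)$ deviation. With this in hand, the three error terms are reproduced exactly as before: the linear term is bounded via the regularized Chebyshev polynomial as in Lemma~\ref{error_lin_term}, with $\|\Retr_{x^*}^{-1}(x_0)\|_{x^*}$ playing the role of $d(x_0,x^*)$ and a factor $1/a_0^2$ emerging from Assumption~\ref{retr_exp_bound}(1) when converting the norm back to a distance; the coefficient-stability term follows Lemmas~\ref{coeff_bound} and~\ref{error_coeff_lemma}, where Assumption~\ref{vector_transport_bound} controls the vector transport of the residuals used to form the matrix $R$; and the nonlinearity term follows Lemma~\ref{error_nonlinear} using the $\varepsilon_i$ from the first step.

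Finally I would sum the three bounds to obtain the displayed inequality, noting that each of the distortion remainders $\epsilon_1,\epsilon_2,\epsilon_3$ is now only $O(d^2(x_0,x^*))$ rather than $O(d^3)$. For the asymptotic claim I would substitute $\lambda=O(d^s(x_0,x^*))$ with $s\in(2,\tfrac{8}{3})$ exactly as in Proposition~\ref{asymp_optimal_convergence_prop}; because $\lim_{d(x_0,x^*)\to 0}\tfrac{1}{d(x_0,x^*)}(\epsilon_1+\epsilon_2+\epsilon_3)=0$ still holds for $O(d^2)$ terms, the stability and nonlinearity contributions vanish in the limit and the leading factor reduces to $S_{k,\bar\lambda}^{[0,\sigma]}/(1-\sigma)$, whose limit is the optimal Chebyshev value $\tfrac{2}{\beta^{-k}+\beta^k}$ with $\beta=(1-\sqrt{1-\sigma})/(1+\sqrt{1-\sigma})$. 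The only delicate point in this last step is checking that the constants $a_0,a_1,a_2,a_3$ and the norm-to-distance conversions do not interfere with the scaling of $\lambda$, which holds because these constants are fixed by $\gX$ and independent of $d(x_0,x^*)$.
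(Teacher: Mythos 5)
Your proposal is correct and follows essentially the same route as the paper's own (sketched) proof: the retraction analogue of Lemma \ref{lin_iter_approx_lemma}, the $O(d^2(x_0,x^*))$ averaging deviation of Proposition \ref{general_bounded_deviation_weighted_aver} in the style of \cite{tripuraneni2018averaging}, the same three-term error decomposition with Assumptions \ref{retr_exp_bound} and \ref{vector_transport_bound} replacing the metric-distortion lemmas, and the observation that $O(d^2)$ remainders still vanish after dividing by $d(x_0,x^*)$. No substantive differences to report.
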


\begin{algorithm}[t]
 \caption{RGD+RiemNA}
 \label{RGDplusRiermNA_algorithm}
 \begin{algorithmic}[1]
  \STATE \textbf{Input:} Initialization $x_0$, regularization parameter $\lambda$, and memory depth $m$.
  \STATE Set $t = 0$.
  \WHILE{$t \leq T$}
  \FOR{$i = 1,..., m$}
  \STATE $x_i = \Retr_{x_{i-1}}(- \eta \, \grad f(x_{i-1}))$
  \STATE $t = t + 1$
  \ENDFOR
   \STATE Compute $r_i  = -\eta \, \gT_{x_i}^{x_{m-1}}  \grad f(x_i), \,\, i = 0,...,m-1$.

   \STATE Solve $c^* = \argmin_{c \in \sR^{m} : c^\top 1 = 1} \| \sum_{i=0}^k c_i r_i \|^2_{x_{m-1}} + \lambda  \| c \|_2^2$.
   \STATE Compute $\bar{x}_{c^*,x} = \tilde{x}_{m-1}$ computed from $\tilde{x}_{i} = \Retr_{\tilde{x}_{i-1}} \Big(  \frac{c^*_i}{\sum_{j=0}^i c^*_j} \Retr^{-1}_{\tilde{x}_{i-1}} \big( x_i  \big) \Big)$, with $\tilde{x}_{-1}= x_0$. 
  \STATE Restart with $x_0 = \bar{x}_{c^*, x}$.
  \ENDWHILE
 \end{algorithmic} 
\end{algorithm}

\section{Experiments}
\label{experiment_sect}

In this section, we test the Riemannian nonlinear acceleration (RiemNA) strategy for several applications on various manifolds, including the sphere, Stiefel, Grassmann, and symmetric positive definite manifolds. To this end, we apply the proposed strategy RiemNA (Algorithm \ref{Rna_algorithm}) for accelerating iterates from the Riemannian gradient descent method with fixed stepsize (RGD), which we call RGD+RiemNA. The concrete steps are shown in Algorithm \ref{RGDplusRiermNA_algorithm}.
Specifically, we run the Riemannian gradient descent to produce $x_0, \ldots, x_{m-1}$, where $m$ is the memory depth. Then, we produce $\bar{x}_{c^*, x}$ with these iterates by Algorithm \ref{Rna_algorithm}. We only consider the recursive weighted average in \eqref{mfd_recursive_average} for the experiments. We then restart Riemannian gradient descent with $x_0 = \bar{x}_{c^*, x}$ for the next epoch. Such an implementation has also been considered in \cite{scieur2020regularized} for the Euclidean case. Based on Section \ref{retr_conv_sect}, we can apply general retraction and vector transport to implement Algorithm \ref{RGDplusRiermNA_algorithm}.

We compare the proposed RGD+RiemNA (Algorithm \ref{RGDplusRiermNA_algorithm}) with the recent development of the Riemannian Nesterov accelerated gradient (RNAG) methods \cite{kim2022accelerated}, which correspond to the current state-of-the-art. We also include RAGD (a variant of Nesterov acceleration on manifolds proposed in \cite{zhang2018estimate}) and RGD as baselines. In particular, we compare with RNAG-C \cite{kim2022accelerated} (designed for geodesic convex functions) and RNAG-SC \cite{kim2022accelerated} and RAGD \cite{zhang2018estimate} (designed for geodesic strongly convex functions) regardless of whether the objective is of the particular class. We include more details of the algorithms in Appendix \ref{appendix_rnag}. 

According to \cite{kim2022accelerated}, RNAG-C, RNAG-SC, and RAGD require the knowledge of geodesic Lipschitz constant $L$. Further, RNAG-SC and RAGD require the geodesic strong convexity parameter $\mu$. In particular, the stepsize of RNAG-C, RNAG-SC and RAGD should be set as $1/L$. If such constants are available, we set them accordingly. Otherwise, we tune over the parameters $L, \mu$ for RNAG-C, RNAG-SC to obtain the best results and set the same parameters for RAGD for comparability. Following \cite{kim2022accelerated}, the additional parameters $\xi, \zeta$ are fixed to be $1$ for RNAG-C, RNAG-SC and $\beta = \sqrt{\mu/L}/5$ for RAGD.
We set stepsize of RGD to be $1/L$ if available and tune the stepsize otherwise. For the proposed RiemNA, we fix $\lambda = 10^{-8}$ and choose memory depth over $\{ 5, 10\}$. 

For fair comparisons, we use exponential map, inverse exponential map, and parallel transport for all the algorithms whenever such operations are properly defined. For other cases, we use retraction, inverse retraction, and vector transport. It should be noted that we maintain consistency of the use of such operations across all the algorithms.

All the algorithms are stopped once gradient norm reaches below $10^{-6}$. All experiments are coded in Matlab using the Manopt toolbox \cite{boumal2014manopt}. The codes can be found on \url{https://github.com/andyjm3}.

\paragraph{Leading eigenvector computation.}
First, we consider the problem of computing the leading eigenvector of a symmetric matrix $A$ of size $d\times d$, by solving $\min_{x \in \gS^{d-1}} \{ f(x) \coloneqq - \frac{1}{2} x^\top A x \}$,
where $\gS^{d-1} \coloneqq \{ x \in \sR^{d} : \| x\|_2 = 1\}$ denotes the sphere manifold of intrinsic dimension $d-1$. For the experiment, we generate a positive definite matrix $A$ with condition number $10^3$ and exponentially decaying eigenvalues in dimension $d = 10^3$. As shown in \cite[Proposition 7.1]{kim2022accelerated}, the problem has geodesic $L$-Lipschitz gradient with $L$ to be the eigengap of matrix $A$, i.e., the difference between maximum and minimum eigenvalues of $A$. 

The stepsize is thus set as $1/L$ for all methods. For RNAG-SC and RAGD, we set $\mu = 10$. For RiemNA, we set memory depth to be $m = 10$. We use exponential and inverse exponential map as well as projection-type vector transport for all algorithms including RGD+RiemNA. The operations are given in Appendix \ref{manifolds_geometry_appendix}. 

In Figure \ref{pca_figure}, we compare the algorithms in optimality gap measured as $f(x_t) - f(x^*)$ against both iteration number and runtime. We notice that RiemNA performs competitively with Nesterov acceleration in terms of iterations and significantly outperforms all baselines in runtime. This is due to the light computational cost compared to Nesterov type of acceleration on manifolds (according to the discussion in Section \ref{riemna_main_sect}). Finally we remark that in the initial phase, RiemNA does not necessarily ensure descent in the objective, only in the later phase where acceleration takes place. This is in accordance with our local convergence analysis.

In Figure \ref{ablation_figure}, we test the sensitivity of the choices of regularization parameter $\lambda$ (on the left with $m = 10$ fixed) and memory depth $m$ (on the right with $\lambda = 10^{-8}$ fixed). The results show robustness of RiemNA under various choices of regularization parameter $\lambda$ and memory depth $m$.

\begin{figure*}[!t]
  \begin{subfigure}[t]{0.5\linewidth}
    \centering
    \includegraphics[width=0.49\linewidth]{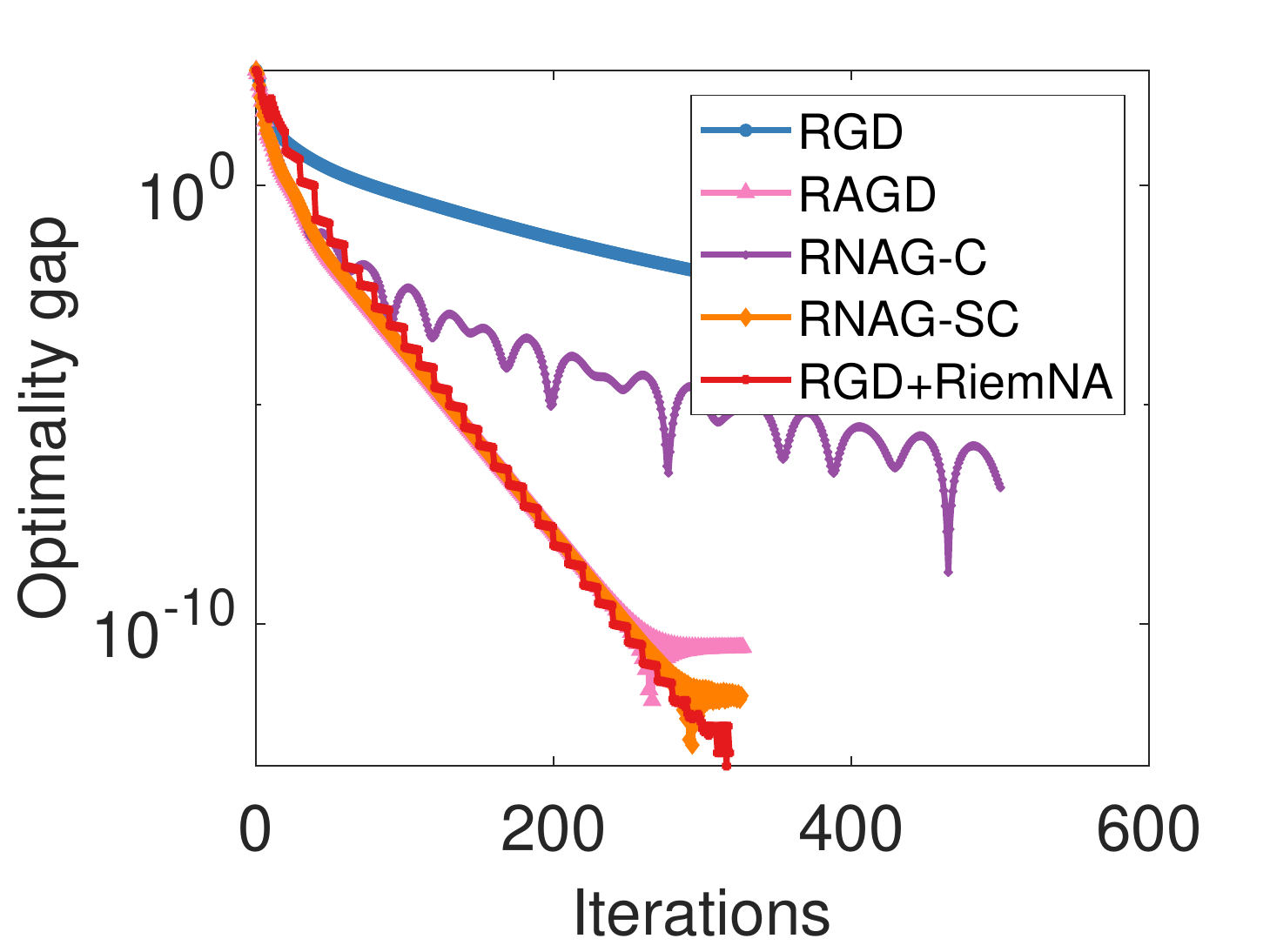}
    \includegraphics[width=0.49\linewidth]{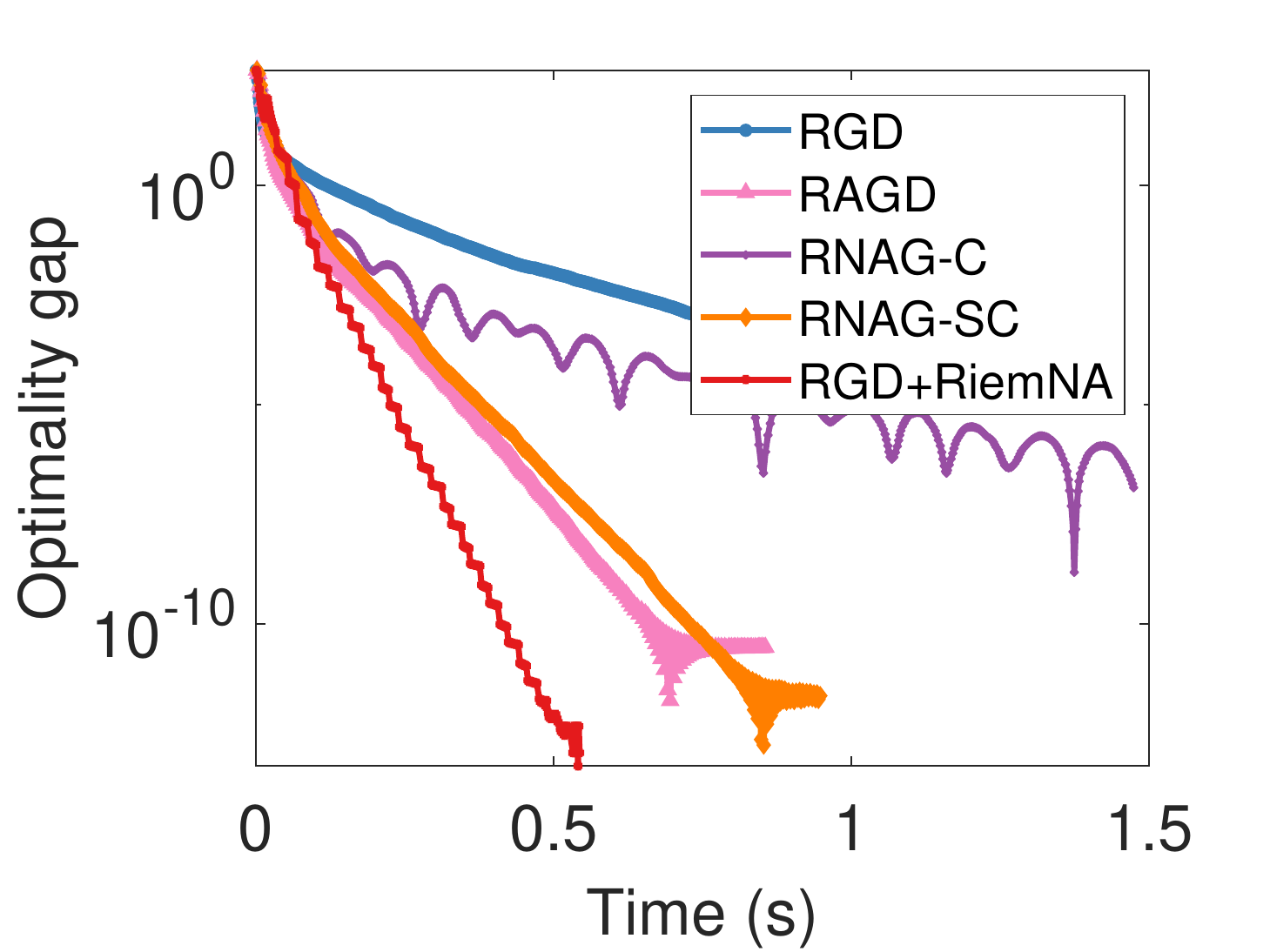}
    \caption{Sphere: Leading eigenvector}\label{pca_figure}
  \end{subfigure}
\hfill
  \begin{subfigure}[t]{.5\linewidth}
    \centering
    \includegraphics[width=0.49\textwidth]{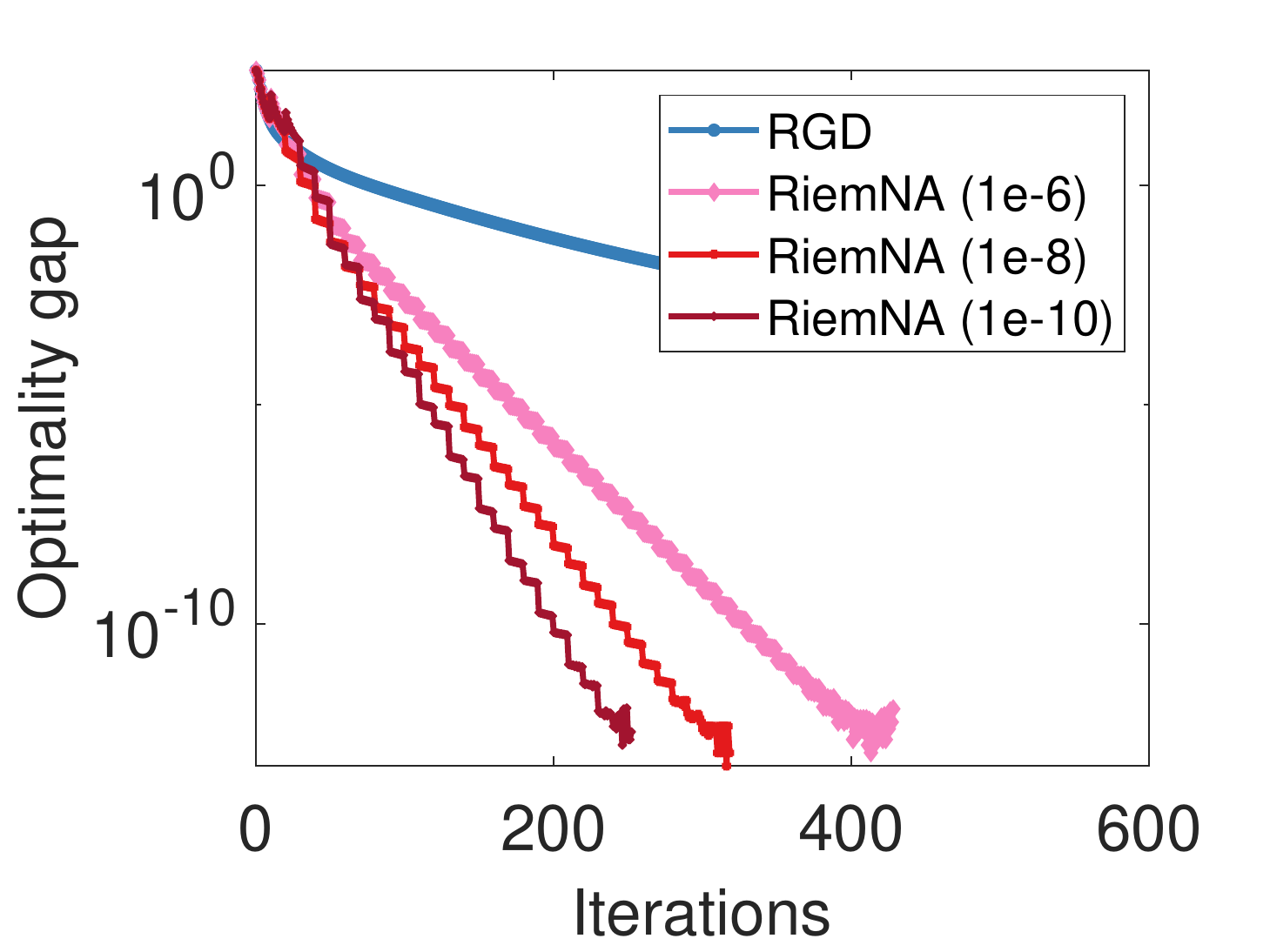}
    \includegraphics[width=0.49\textwidth]{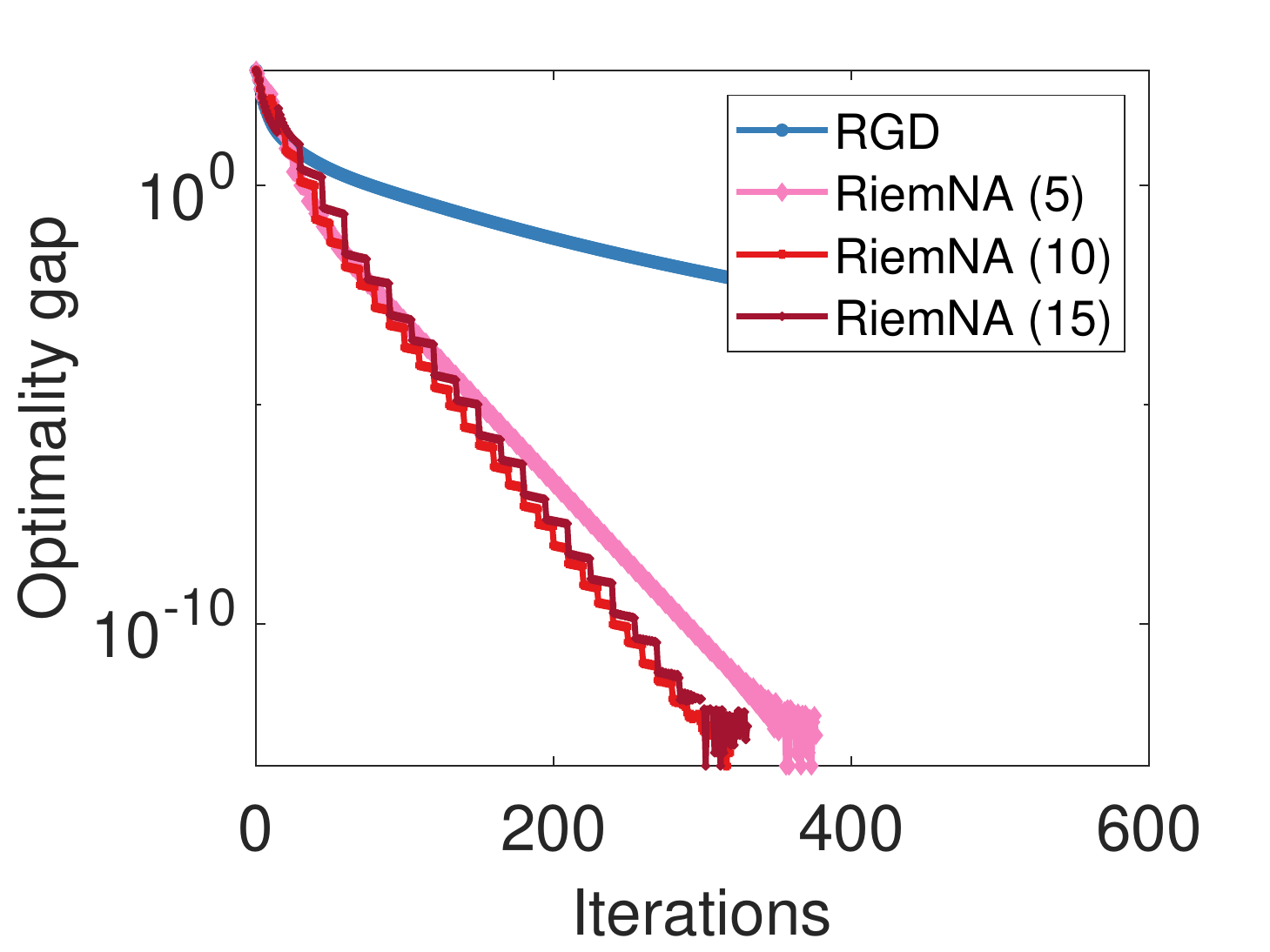}
    \caption{Parameter sensitivity (Leading eigenvector)}\label{ablation_figure}
  \end{subfigure}\\[5pt]
  \begin{subfigure}[t]{.5\linewidth}
    \centering
    \includegraphics[width=0.49\textwidth]{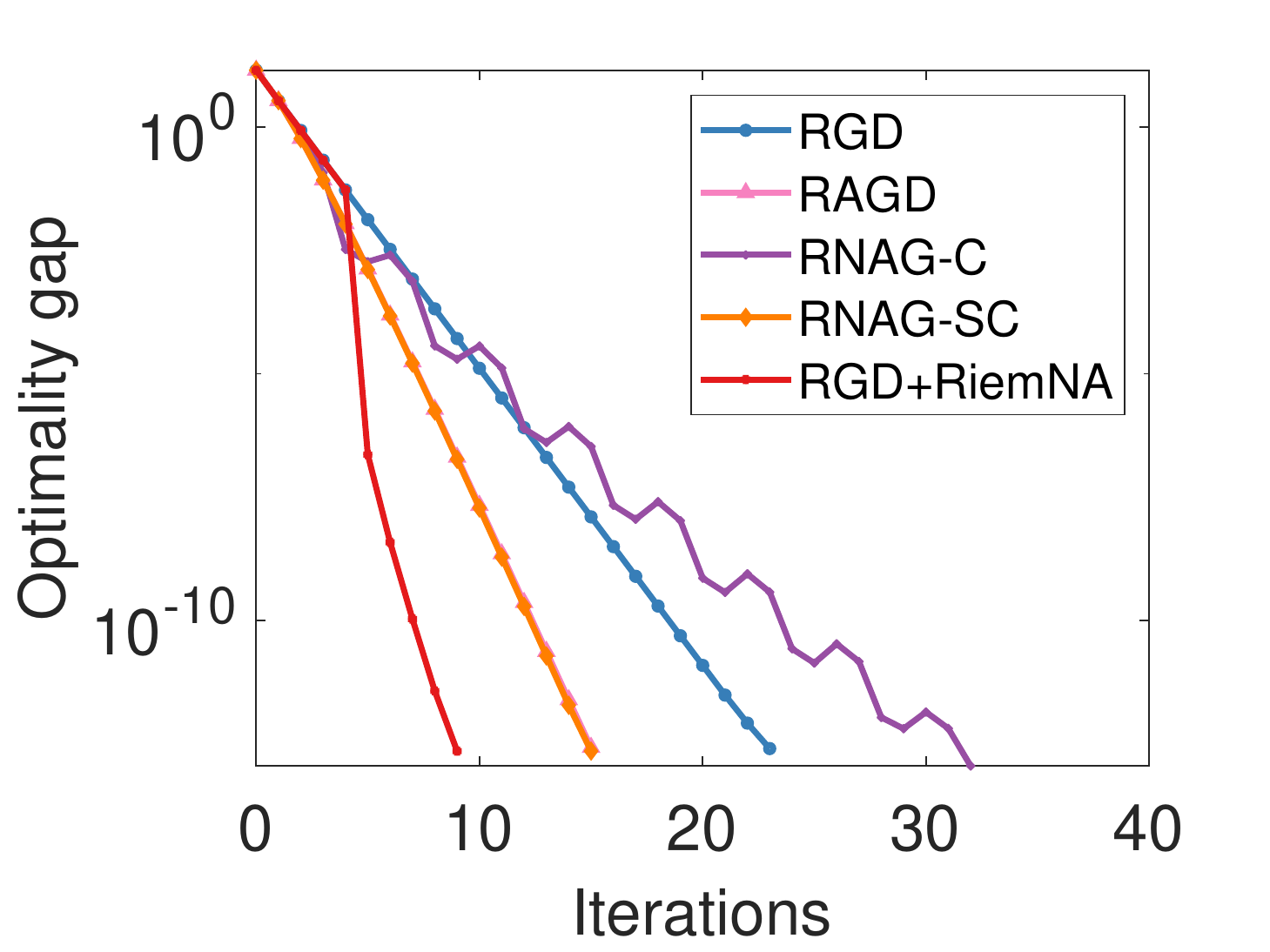}
    \includegraphics[width=0.49\textwidth]{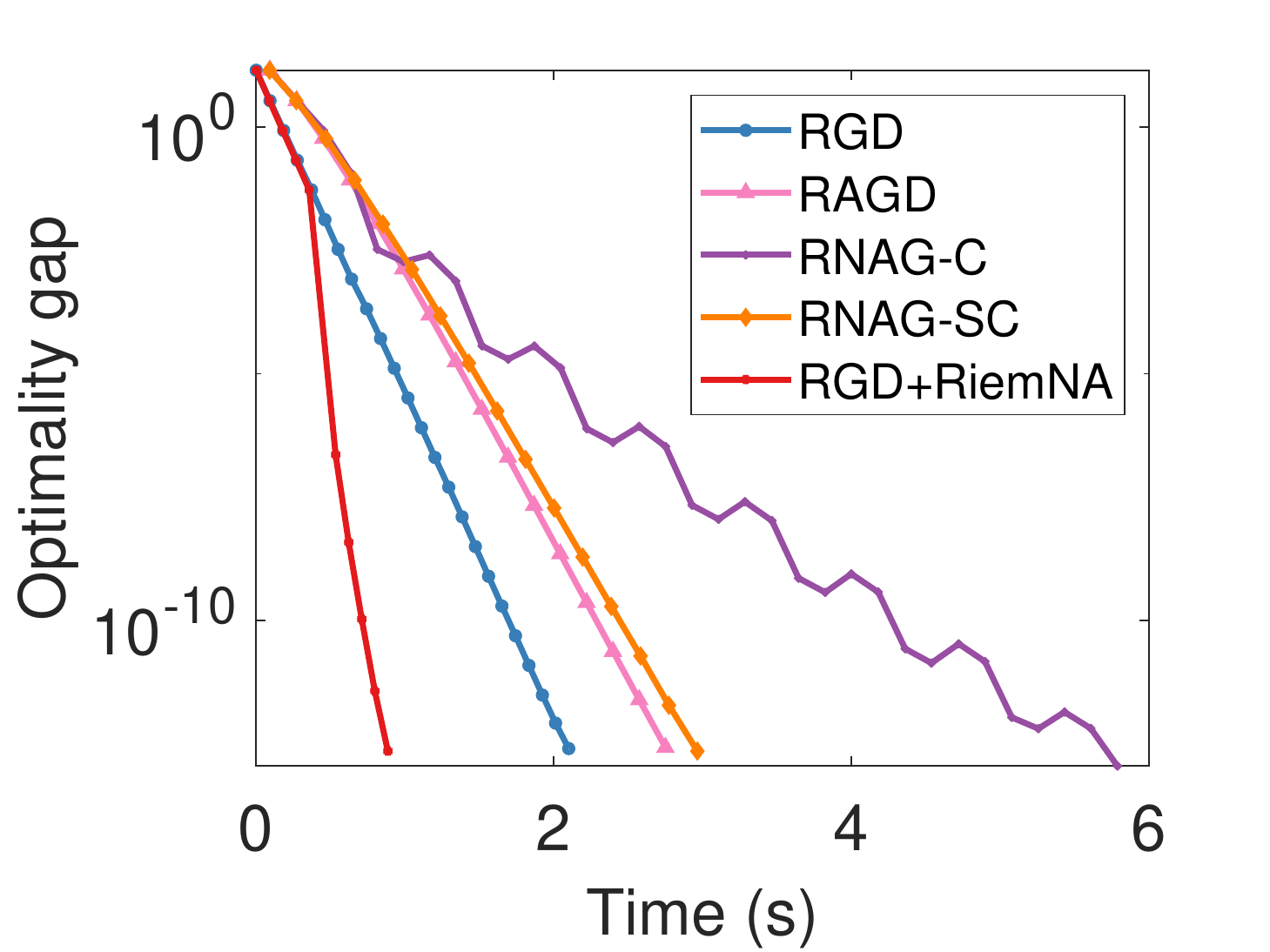}
    \caption{SPD: Fr\'echet mean}\label{spdfm_figure}
  \end{subfigure}
  \hfill
  \begin{subfigure}[t]{.5\linewidth}
    \centering
    \includegraphics[width=0.49\textwidth]{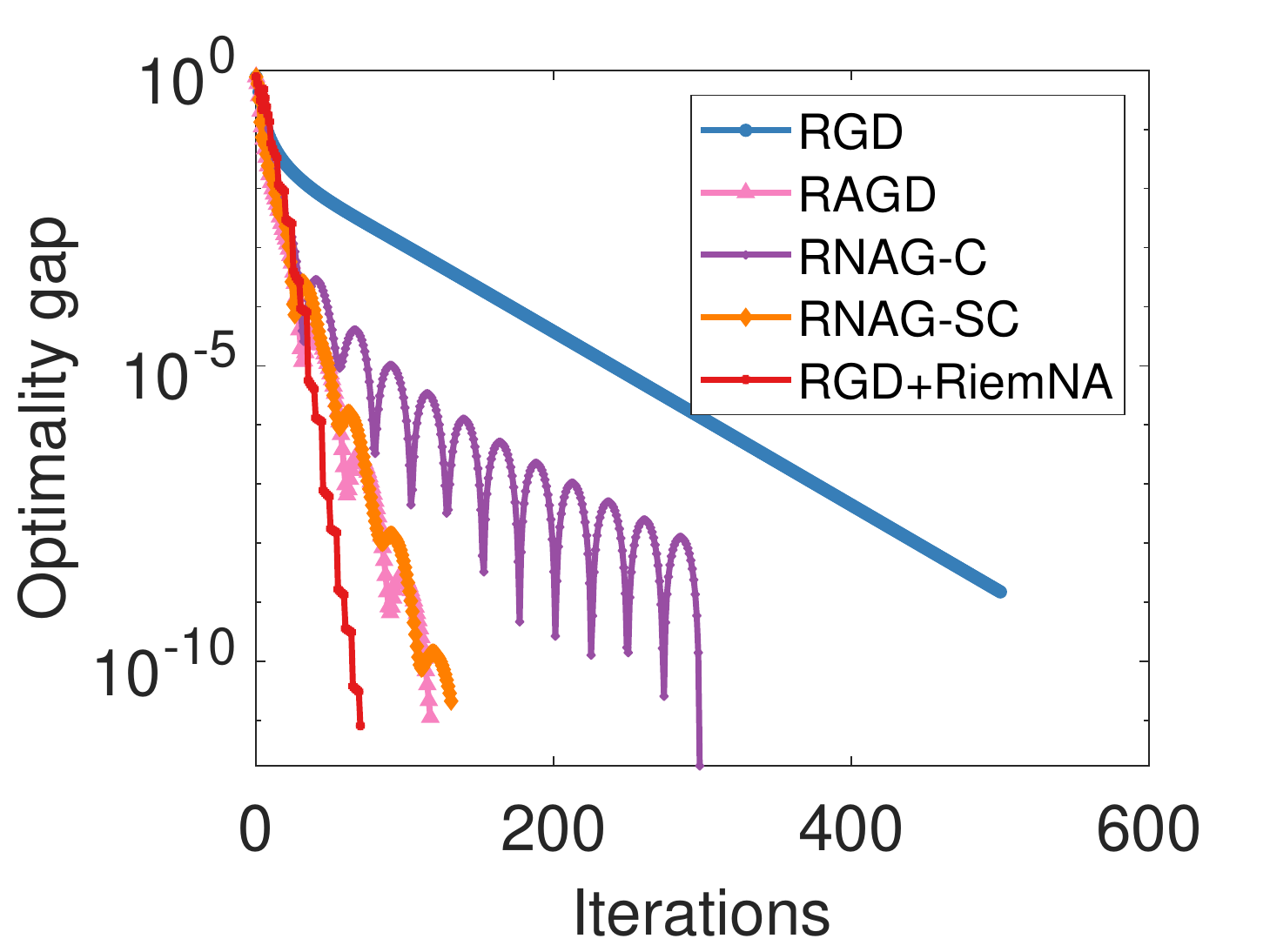}
    \includegraphics[width=0.49\textwidth]{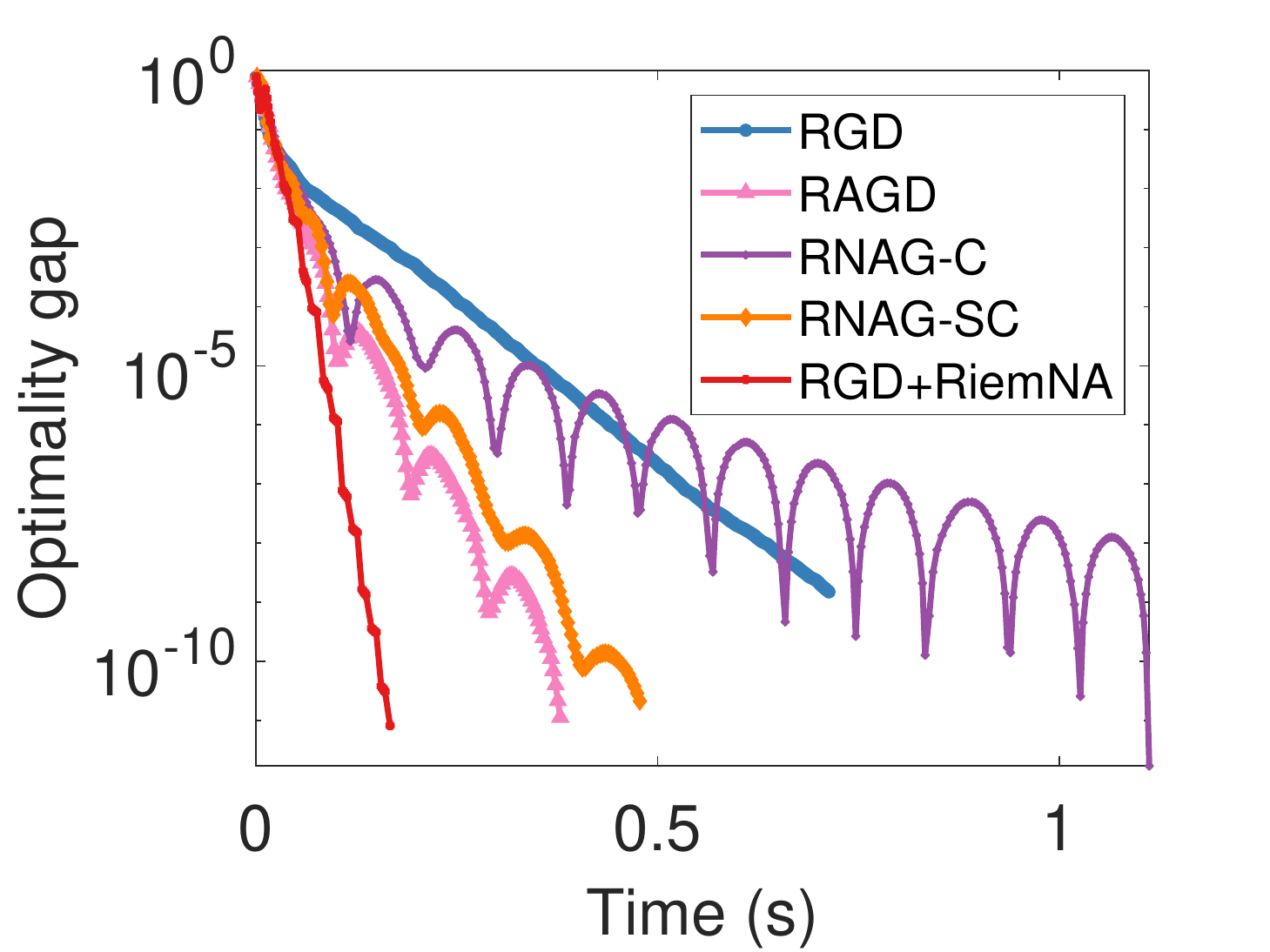}
    \caption{Stiefel: Orthogonal procrustes}\label{ortho_figure}
  \end{subfigure}\\[5pt]
  \begin{subfigure}[c]{\linewidth}
    \centering
    \includegraphics[width=0.248\textwidth]{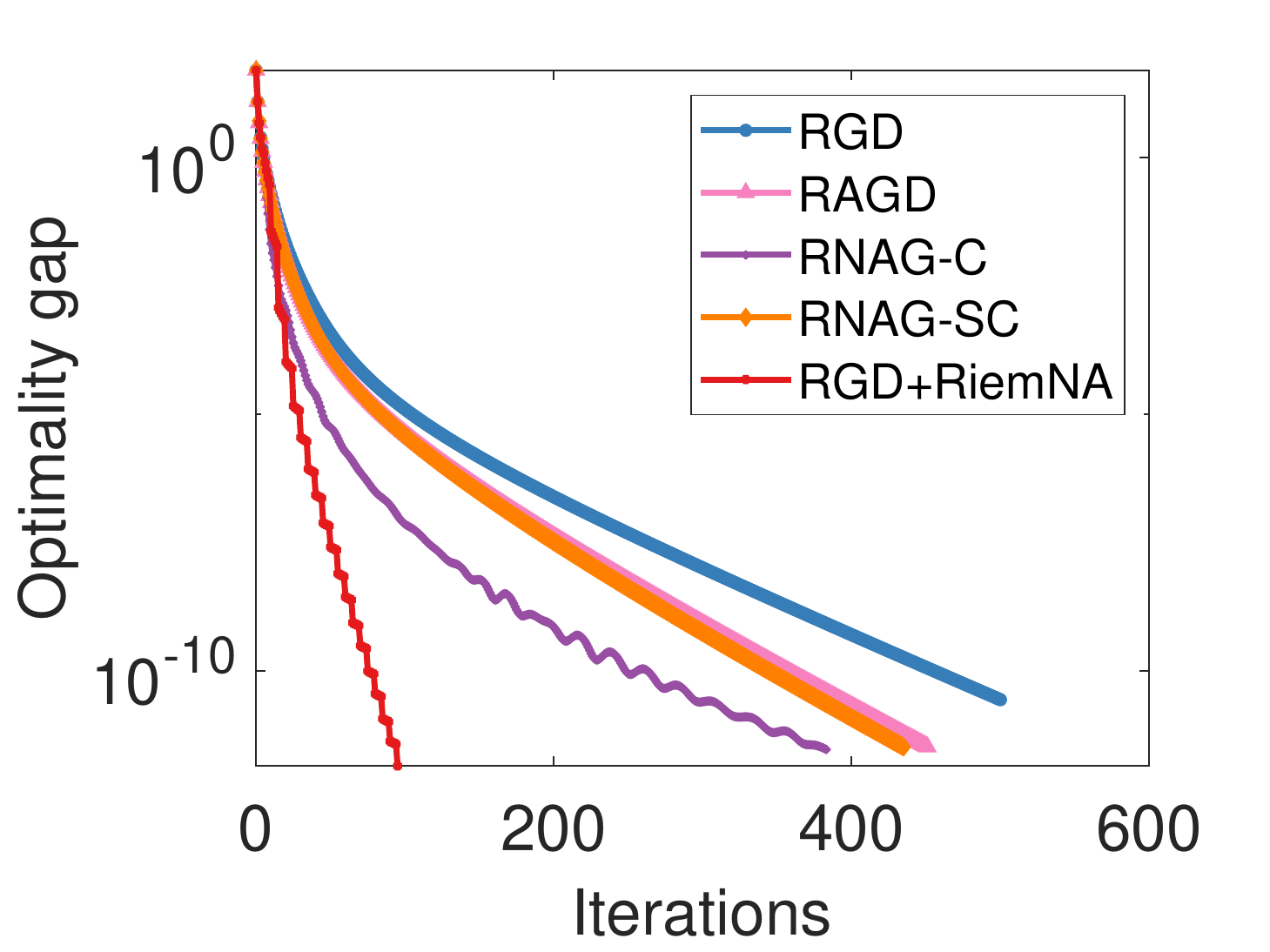}
    \includegraphics[width=0.248\textwidth]{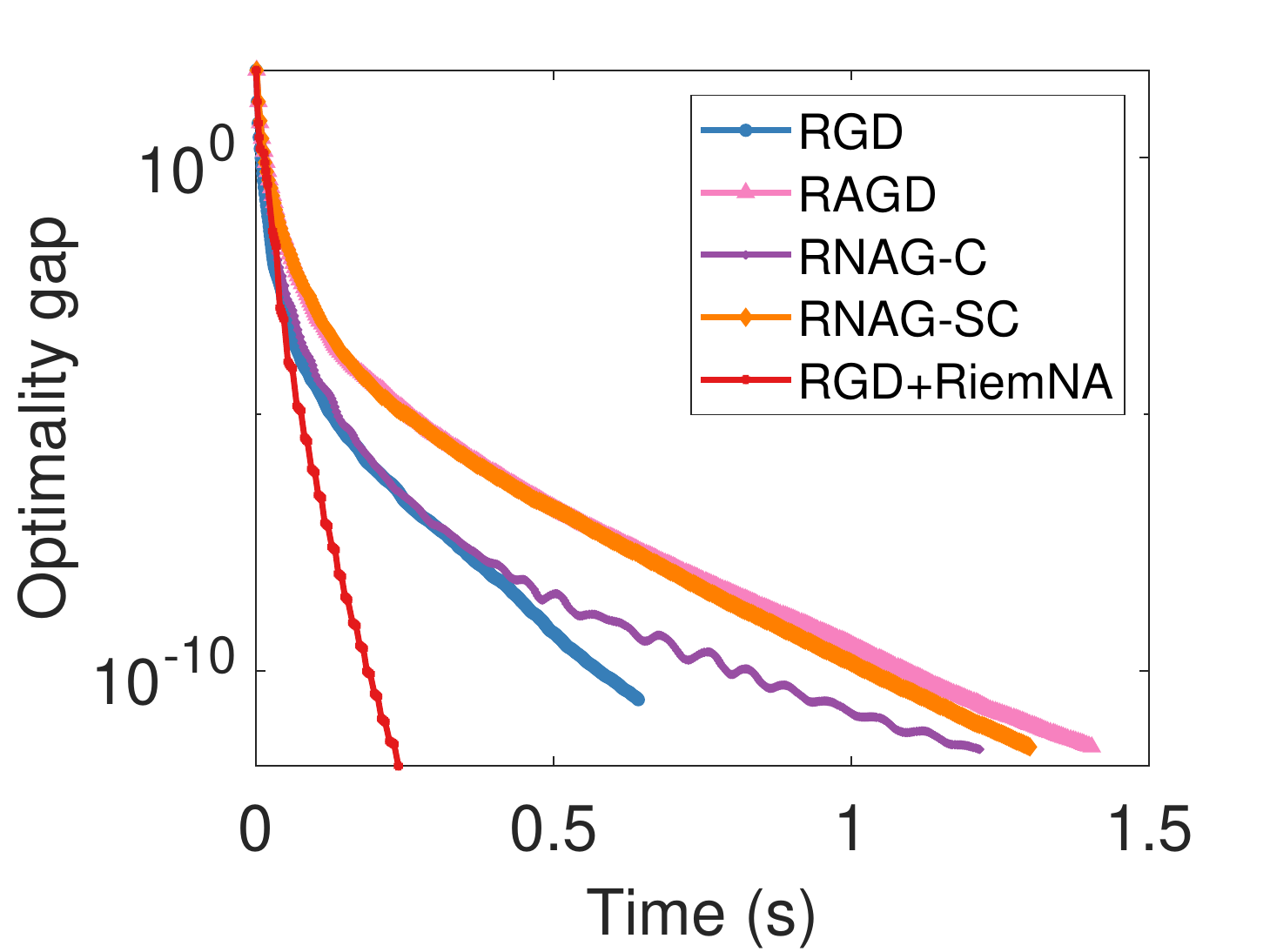}
    \caption{Grassmann: Nonlinear eigenspace}\label{nleig_figure}
  \end{subfigure}
  \caption{Experiment results on leading eigenvector computation \eqref{pca_figure}, Fr\'echet mean \eqref{spdfm_figure}, orthogonal procrustes \eqref{ortho_figure}, and nonlinear eigenspace \eqref{nleig_figure} problems where we observe that the proposed RGD+RiemNA outperforms all the baselines. Parameter sensitivity on leading eigenvector problem is in \eqref{ablation_figure} where on the left, we vary $\lambda$ by fixing $m = 10$ and on the right, we vary $m$ by fixing $\lambda = 10^{-8}$.}
\end{figure*}

\paragraph{Fr\'echet mean of SPD matrices.}
The second example is to compute the Fr\'echet mean of several symmetric positive definite (SPD) matrices under the affine-invariant metric \cite{bhatia2009positive}. Specifically, given a set of SPD matrices $\{A_i \}_{i = 1}^N$ of size $d \times d$, the aim is to $\min_{X \in \sS_{++}^d} \frac{1}{2N} \sum_{i = 1}^N \| \logm(X^{-1/2} A_i X^{-1/2}) \|_{\rm F}^2$, where we denote $\sS_{++}^d$ as the set of SPD matrices of size $d \times d$, $\| \cdot \|_{\rm F}$ as the Frobenius norm, and $\logm(\cdot)$ is the matrix logarithm. The geometry of the SPD manifold is given in Appendix \ref{manifolds_geometry_appendix}. For the experiments, we use exponential map and its inverse as well as the parallel transport for all the algorithms. As commented previously, the geometry is negatively curved, and thus, the Fr\'echet mean problem is geodesic $1$-strongly convex. Hence $\mu = 1$. For this problem, we generate random $N = 100$ SPD matrices of dimension $d = 10$. The stepsize for all methods are tuned and set to be $0.5$. For RiemNA, we set memory depth $m = 5$.  In Figure \ref{spdfm_figure}, we observe superiority of RiemNA in both iterations and runtime. We particularly notice in this case, the extrapolation step leads to a significant acceleration of convergence. 


\paragraph{Orthogonal procrustes problem.}
We also consider the orthogonal procrustes problem on Stiefel manifold. Suppose we are given $A \in \sR^{r\times r}, B \in \sR^{p \times r}$, the objective is $\min_{X \in {\rm St}(p, r)} \| X A - B \|_{\rm F}^2$ where  ${\rm St}(p, r) \coloneqq \{ X \in \sR^{p \times r} : X^\top X = I \}$ is the set of column orthonormal matrices, which forms the so-called Stiefel manifold with the canonical metric. To implement the algorithms, we use QR-based retraction and inverse retraction as well as projection-type vector transport. The definitions are given in Appendix \ref{manifolds_geometry_appendix}. We generate random matrices $A, B$ where the entries are normal distributed. We set $p = 100, r = 5$. For this problem, both $L$ and $\mu$ are unknown. Hence, we tune and set stepsize to be $1$ for all methods. For RNAG-SC and RAGD, we select $\mu = 0.005$ and for RiemNA we set memory depth $m = 5$. Figure \ref{ortho_figure} shows similar patterns as previous two experiments where RiemNA consistently converges faster than all the baselines. 


\paragraph{Nonlinear eigenspace problem.}
Finally, the problem of computing nonlinear eigenspace arises as the total energy minimization on Grassmann manifold \cite{zhao2015riemannian}, i.e., $\min_{X \in {\rm Gr}(p, r)} \allowbreak \frac{1}{2} \trace(X^\top L X) + \frac{1}{4} \rho(X)^\top L^{-1} \rho(X)$ where $\rho(X) \coloneqq {\rm diag}(X X^\top)$ and $L$ is a discrete Laplacian operator. The Grassmann manifold ${\rm Gr}(p, r)$ is the set of all $r$-dimensional subspaces in $\sR^p$, which can be identified as an equivalence class of Stiefel manifold. For experiment, we implement the algorithms with QR-based retraction and inverse retraction as well as projection-based vector transport similar to Stiefel manifold (details given in Appendix \ref{manifolds_geometry_appendix}). We generate $L$ as a tridiagonal matrix with main diagonal entries to be $2$ and sub- and super-diagonal entries to be $-1$. The stepsize is tuned and set to be $0.1$ for all methods and for RNAG-SC, RAGD, $\mu = 5$ and for RiemNA, $m = 5$. We show the convergence results in Figure \ref{nleig_figure} where we also observe that RiemNA significantly outperforms other methods. 


\section{Concluding remarks and perspectives}
In this paper, we introduce a simple strategy for accelerating first-order Riemannian optimization algorithms on manifolds, which is based on the idea of extrapolation and nonlinear acceleration. The extrapolation step is performed via novel intrinsic weighted averaging schemes on manifolds. By carefully bounding the metric distortion, we show that the Riemannian acceleration scheme achieves the same optimal convergence rate as the Euclidean counterparts. When applied to the iterates from the Riemannian gradient descent method, the new acceleration scheme asymptotically matches the convergence guarantee obtained by the Nesterov type of acceleration algorithms on manifolds, while being more practically favourable. We verify such claims for a variety of applications. 

For simplicity, this work only analyze convergence of the acceleration with the Riemannian gradient descent method. This is because locally on the tangent space at optimality, Riemannian gradient descent leads to a linear progression given by a symmetric operator $G$ as shown in Lemma \ref{lin_iter_approx_lemma}. However, for general solvers, including conjugate gradient and even Nesterov accelerated gradient methods on manifolds, Lemma \ref{lin_iter_approx_lemma} may not hold. It would be interesting to explore nonlinear acceleration for more advanced solvers and also to consider stochastic settings.





\bibliographystyle{plain}

\newpage
\appendix

\section{Extensions}

In this section, we consider various extensions to the proposed nonlinear acceleration on manifolds.



\subsection{Online Riemannian nonlinear acceleration}
Following \cite{scieur2018online,bollapragada2022nonlinear}, we can extend Algorithm \ref{Rna_algorithm} to the online setting, where the extrapolated point $\bar{x}_{c,x}$ is used to update the iterate sequence. The idea is to add a mixing step by updating $\bar{x}_{c,x}$ in the direction of the weighted average of the gradients, i.e., $\overline{\grad}f(\bar{x}_{c,x}) = \sum_{i=0}^k c_i \Gamma_{x_i}^{\bar{x}_{c,x}} \grad f(x_i)$. For the averaging schemes \eqref{mfd_recursive_average}, \eqref{frechet_mean}, the next iteration starts with $\Exp_{\bar{x}_{c,x}}(- \delta \, \overline{\grad}f(\bar{x}_{c,x}) )$ for some mixing parameter $\delta > 0$. Particularly for the tangent space averaging scheme \eqref{averg_mfd_alternative}, we show a more efficient strategy of mixing, which we focus in this paper. The averaging and mixing steps are both performed on the same tangent space. Specifically, let $x_{-1} = x_0$, we define the following progression of the online nonlinear acceleration on manifolds. 
\begin{align*}
    x_{k+1} &= \Exp_{x_k} \Big( - \sum_{i=0}^{k-1} \theta_i \Gamma_{x_i}^{x_k} \Exp_{x_i}^{-1}(x_{i+1}) - \delta \sum_{i=0}^k c_i \Gamma_{x_i}^{x_k} \grad f(x_i) \Big) \\
    &= \Exp_{x_k} \Big( - \delta c_k \grad f(x_k) -  \sum_{i=0}^{k-1} \Gamma_{x_i}^{x_k} \big( \theta_i \Exp_{x_i}^{-1}(x_{i+1}) + \delta c_i \grad f(x_i)  \big) \Big). 
\end{align*}
The complete procedures are presented in Algorithm \ref{Rnaonline_algorithm}.

\begin{algorithm}[!h]
 \caption{Riemannian nonlinear acceleration (RiemNA-online)}
 \label{Rnaonline_algorithm}
 \begin{algorithmic}[1]
  \STATE \textbf{Input:} Initialization $x_0$. Regularization parameter $\lambda$. Mixing parameter $\delta$. 
  \FOR{$k = 0,...$}
  \STATE Compute $r_i  = \Gamma_{x_i}^{x_k} {\rm Exp}_{x_i}^{-1}(x_{i+1}) \in T_{x_k}\M, i = 0,...,k$
  \STATE Solve $c^* = \argmin_{c \in \sR^{k+1} : c^\top 1 = 1} \| \sum_{i=0}^k c_i r_i \|^2_{x_k} + \lambda  \| c \|_2^2$.
  \STATE Compute $x_{k+1} = \Exp_{x_k} \big( - \delta c_k^* \grad f(x_k) -  \sum_{i=0}^{k-1} \Gamma_{x_i}^{x_k} \big( \theta_i^* \Exp_{x_i}^{-1}(x_{i+1}) + \delta c_i^* \grad f(x_i)  \big) \big)$, where $\theta_i^* = \sum_{j = 0}^i c_j^*$. 
  \ENDFOR
  \STATE \textbf{Output:} $x_{k+1}$. 
 \end{algorithmic} 
\end{algorithm}

\subsection{Practical considerations}
Here are some practical considerations to use nonlinear acceleration on manifolds.

\paragraph{Iterates from Riemannian gradient descent with line-search.}
Suppose the iterates $\{x_i\}_{i=0}^k$ are generated from $x_i = \Exp_{x_i}(- \eta_i \grad f(x_{i-1}))$ where the stepsize is determined from a line-search procedure (such as backtracking line-search \cite{boumal2019global}) and thus varies across iterations. Nevertheless, Lemma \ref{lin_iter_approx_lemma} still holds with $G_i = \id - \eta_i \hess f(x^*)$. Suppose the stepsize is chosen such that $\|G_i \| \leq \sigma < 1$. Then the analysis still holds under this setting.

\paragraph{Safeguarding decrease.}
Due to the curved geometry of the manifold and nonlinearity of the objective function, it is not guaranteed that $f(\bar{x}_{c,x})$ will decrease. In the main text, we only show local convergence of the acceleration strategy. A typical globalization technique is to only keep the extrapolated point if it shows sufficient decrease compared to previous iterates, i.e., $f(\bar{x}_{c,x}) \leq \tau \min_{i=0,...,k} f(x_i)$ for some $\tau < 1$. In \cite{scieur2020regularized}, an adaptive regularization strategy has been proposed to select regularization parameter $\lambda$. Here we adapt the same strategy on manifolds, which we show in Algorithm \ref{AdaRna_algorithm}. As noticed in \cite{scieur2020regularized}, a higher value of $\lambda$ pushes the weights close to uniform and thus stays closer to $x_0$. Thus the line-search over $t$ tries to enhance the progress compared to the initialization. In addition, for online Riemannian nonlinear acceleration specifically, we may consider performing a line-search over the parameter $\delta$ to ensure a sufficient descent condition is met.

\paragraph{Limited-memory and extrapolation frequency.}
Rather than keeping all the previous iterates for extrapolation, we can set a memory depth of $m$ and using only the most recent $m$ iterates to compute the extrapolated point. In practice, $m$ is usually set to be less than $10$. In addition, we notice that compared to the Euclidean version, the computational cost for the Riemannian nonlinear acceleration can be high due to the use of parallel transport. Hence to mitigate this issue, we may only compute the extrapolated point every $m$ iteration.

\paragraph{Efficient update of the residual matrix $R$.}
Recall for each application of Riemannian nonlinear acceleration, we need to compute $R = [\langle r_i, r_j \rangle_{x_k}]_{0 \leq i,j \leq k}$, where $r_i = \gT_{x_i}^{x_k} \Retr_{x_i}^{-1}(x_{i+1})$, where we write using (isometric) vector transport and general retraction. This includes parallel transport and exponential map as special cases. By isometry, in the next iteration when we receive $r_{k+1}$, the update of $R$ only requires computing $\langle \Gamma_{x_k}^{x_{k+1}} r_i, r_{k+1}\rangle_{x_{k+1}}$, $i = 0,...,k+1$. Denote the vector $r_+ \coloneqq [\langle \Gamma_{x_k}^{x_{k+1}} r_i, r_{k+1} \rangle_{x_{k+1}}]_{0\leq i \leq k}$. Then the updated residual matrix is
\begin{equation*}
    R_+ = 
    \begin{bmatrix}
        R & r_+ \\
        r_+^\top & \|  r_{k+1} \|^2_{x_{k+1}}
    \end{bmatrix}.
\end{equation*}

\begin{algorithm}[!h]
 \caption{Adaptive regularized Riemannian nonlinear acceleration (AdaRiemNA)}
 \label{AdaRna_algorithm}
 \begin{algorithmic}[1]
  \STATE \textbf{Input:} A sequence of iterates $x_0, ..., x_{k+1}$. Tentative regularization parameters $\{ \lambda_j \}_{j=1}^k$.
  \STATE Compute $r_i  = \Gamma_{x_i}^{x_k} {\rm Exp}_{x_i}^{-1}(x_{i+1}) \in T_{x_k}\M, i = 0,...,k$
  \FOR{$j = 1,...,k$}
  \STATE Solve $c^*(\lambda_j) = \argmin_{c \in \sR^{k+1} : c^\top 1 = 1} \| \sum_{i=0}^k c_i r_i \|^2_{x_k} + \lambda_j  \| c \|_2^2$.
  \STATE Compute $\bar{x}(\lambda_j) = \bar{x}_{c,x}$ using $c^*(\lambda_j)$. 
  \ENDFOR
  \STATE Set $\bar{x}^* = \argmin_{j=1,...,k} f(\bar{x}(\lambda_j))$.
  \STATE Compute $u = \Exp^{-1}_{x_0}(\bar{x}^*)$ and set $ t = 1$.
  \WHILE{$f(\Exp_{x_0}( 2t u )) < f(\Exp_{x_0}(tu))$}
  \STATE Update $t = 2t$.
  \ENDWHILE
  \STATE \textbf{Output:} $\Exp_{x_0}(t u)$. 
 \end{algorithmic} 
\end{algorithm}

\section{From Euclidean averaging to Riemannian averaging}
\label{euclidean_average_appendix}

To extend the idea of weighted average to manifolds, we first rewrite the weighted average on the Euclidean space as follows.

\begin{lemma}[Weighted average recursion]
\label{waverage_recursion_lemma}
Given a set of coefficients $\{ c_i \}_{i=0}^k$ with $\sum_{i=0}^k c_i = 1$ and a set of iterates $\{x_i \}_{i=0}^k$. Let the streaming weighted average be defined as $\tilde{x}_{i} = \tilde{x}_{i-1} + \gamma_i  (x_i - \tilde{x}_{i-1})$ where $\gamma_i = \frac{c_i}{\sum_{j=0}^i c_j}$ for $i = 0, ..., k$ and $\tilde{x}_{-1} = x_0$. Then $\tilde{x}_{k} = \sum_{i=0}^k c_i x_i$. 
\end{lemma}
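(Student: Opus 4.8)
The plan is to prove, by induction on $i$, the slightly stronger claim that the streaming average equals the \emph{normalized} partial weighted mean, namely
\begin{equation*}
    \tilde{x}_i = \frac{1}{S_i}\sum_{j=0}^i c_j x_j, \qquad S_i := \sum_{j=0}^i c_j,
\end{equation*}
for every $i = 0,\dots,k$. Since the hypothesis $\sum_{i=0}^k c_i = 1$ gives $S_k = 1$, specializing to $i = k$ yields $\tilde{x}_k = \sum_{i=0}^k c_i x_i$, which is the assertion. Tracking the running partial sum $S_i$ rather than the raw mean is the key bookkeeping device that makes the recursion telescope cleanly, since the normalization absorbs the otherwise awkward varying denominators.

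For the base case $i = 0$, note $\gamma_0 = c_0/S_0 = c_0/c_0 = 1$, so $\tilde{x}_0 = \tilde{x}_{-1} + 1\cdot(x_0 - \tilde{x}_{-1}) = x_0$, which matches $\frac{1}{S_0}c_0 x_0 = x_0$. For the inductive step, I would first rewrite the recursion in convex-combination form $\tilde{x}_i = (1-\gamma_i)\tilde{x}_{i-1} + \gamma_i x_i$, and then use the elementary identity $1 - \gamma_i = 1 - c_i/S_i = S_{i-1}/S_i$, which holds because $S_i = S_{i-1} + c_i$. Substituting the induction hypothesis $\tilde{x}_{i-1} = S_{i-1}^{-1}\sum_{j=0}^{i-1} c_j x_j$ then gives
\begin{equation*}
    \tilde{x}_i = \frac{S_{i-1}}{S_i}\cdot\frac{1}{S_{i-1}}\sum_{j=0}^{i-1} c_j x_j + \frac{c_i}{S_i}x_i = \frac{1}{S_i}\sum_{j=0}^{i} c_j x_j,
\end{equation*}
which completes the induction and hence the proof.

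There is no substantive obstacle here; the only points requiring a word of care are the implicit nonvanishing of the partial sums $S_i$ (needed both for $\gamma_i$ to be well defined and for the identity $1-\gamma_i = S_{i-1}/S_i$), and the observation that the factor $\gamma_0 = 1$ annihilates the initializer $\tilde{x}_{-1}$, so the base case is insensitive to the convention $\tilde{x}_{-1} = x_0$. Everything else reduces to the single telescoping computation above.
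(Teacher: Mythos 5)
Your proof is correct and takes essentially the same route as the paper's: induction on the streaming recursion $\tilde{x}_i = (1-\gamma_i)\tilde{x}_{i-1} + \gamma_i x_i$. The only (cosmetic) difference is the invariant — you carry the normalized partial weighted mean $S_i^{-1}\sum_{j=0}^{i} c_j x_j$ with the specific weights $\gamma_i = c_i/S_i$ substituted from the outset, whereas the paper first proves the general product formula $\tilde{x}_i = \prod_{j=1}^i(1-\gamma_j)\,x_0 + \gamma_1\prod_{j=2}^i(1-\gamma_j)\,x_1 + \cdots + \gamma_i x_i$ for arbitrary $\gamma_j$ and only matches coefficients at $i=k$; your version is marginally more direct, and your remark about the nonvanishing of the partial sums $S_i$ is a point the paper leaves implicit.
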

\begin{proof}
For some $\gamma_1, ..., \gamma_k$, the streaming weighted average is defined as $\tilde{x}_i = \tilde{x}_{i-1} + \gamma_i (x_i - \tilde{x}_{i-1})$ for $i \in [k]$.
We first show the streaming weighted average has the form 
\begin{equation*}
    \tilde{x}_i = \prod_{j=1}^i (1 - \gamma_j) x_0 + \gamma_1 \prod_{j=2}^i (1 - \gamma_j) x_1 + \cdots +\gamma_i x_i, \quad \forall i \in [k].
\end{equation*}
We prove such argument by induction. For $i = 1$, it is clear that $\tilde{x}_1 = (1-\gamma_1) x_0 + \gamma_1 x_1$ and satisfies the form. Suppose at $i = k'$, the equality is satisfied, then for $i = k' + 1$, we have 
\begin{equation*}
    \tilde{x}_{k'+1} = \tilde{x}_{k'} + \gamma_{k'+1} (x_{k'+1} - \tilde{x}_{k'}) = (1 - \gamma_{k' +1}) \tilde{x}_{k'} + \gamma_{k'+1} x_{k'+1}
\end{equation*}
which satisfies the equality. Hence this argument holds for all $i \in [k]$. Finally, at $i = k$, we see that the choice that $\gamma_i = \frac{c_i}{\sum_{j=0}^i c_i}$ leads to the matching coefficients.
\end{proof}

\section{Geometry of specific Riemannian manifolds}
\label{manifolds_geometry_appendix}

\paragraph{Sphere manifold.}
The sphere manifold $\gS^{d-1}$ is an embedded submanifold of $\sR^d$ with the tangent space identified as $T_x\gS^{d-1} = \{ u \in \sR^d : x^\top u = 0\}$. The Riemannian metric is given by $\langle u, v\rangle = \langle u,v \rangle_2$ for $u,v \in T_x\gS^{d-1}$. We use the exponential map derived as $\Exp_x(u) = \cos(\| u \|_2) x + \sin(\| u\|_2) \frac{u}{\| u \|_2}$ and the inverse exponential map as $\Exp_{x}^{-1}(y) = \arccos(x^\top y) \frac{{\rm Proj}_{x}(y-x)}{\|{\rm Proj}_{x}(y-x) \|_2}$ where ${\rm Proj}_{x}(v) = v - (x^\top v)x$ is the orthogonal projection of any $v \in \sR^d$ to the tangent space $T_x\gS^{d-1}$. The vector transport is given by the projection operation, i.e., $\gT_x^y u = {\rm Proj}_y(u)$.

\paragraph{Symmetric positive definite (SPD) manifold.}
The SPD manifold of dimension $d$ is denoted as $\sS_{++}^d \coloneqq \{ X \in \sR^{d \times d} : X^\top = X, X \succ 0\}$. The tangent space $T_X\M$ is the set of symmetric matrices. The affine-invariant Riemannian metric is given by $\langle U, V \rangle_X = \trace(X^{-1} U X^{-1} V)$ for any $U, V \in T_X\sS_{++}^d$. We make use of the exponential map, which is $\Exp_X(U) = X \expm(X^{-1} U)$ where $\expm(\cdot)$ is the matrix exponential. The inverse exponential map is derived as $\Exp^{-1}_X(Y) = X \logm(X^{-1} Y)$ for any $X, Y \in \sS_{++}^d$. We consider the parallel transport given by $\Gamma_X^Y U = E U E^\top$ with $E = (Y X^{-1})^{1/2}$.

\paragraph{Stiefel manifold.}
The Stiefel manifold of dimension $p \times r$ is written as ${\rm St}(p,r) \coloneqq \{ X \in \sR^{p \times r} : X^\top X = I \}$. The Riemannian metric is the Euclidean inner product defined as $\langle U, V\rangle_X = \langle U, V \rangle_2$. We consider the QR-based retraction $\Retr_X(U) = {\rm qf}(X+U)$ where ${\rm qf}(\cdot)$ returns the Q-factor from the QR decomposition. The inverse retraction is derived as for $X, Y \in \mathcal{O}(d)$ $\Retr_{X}^{-1}(Y) = Y R - X$, where $R$ is solved from the system $X^\top Y R + R^\top Y^\top X = 2 I$. The vector transport is given by the orthogonal projection, which is $\gT_{X}^Y = U - Y \{Y^\top U\}_{\rm S}$ where $\{A  \}_{\rm S} \coloneqq (A + A^\top)/2$.

\paragraph{Grassmann manifold.}
The Grassmann manifold of dimension $p \times r$, denoted as ${\rm Gr}(p, r)$, is the set of all $r$ dimensional subspaces in $\sR^p$ ($p \geq r$). Each point on the Grassmann manifold can be identified as a column orthonormal matrices $X \in \sR^{p \times r}, X^\top X = I$ and two points $X, Y \in {\rm Gr}(p, r)$ are equivalent if $X = Y O$ for some $O \in \mathcal{O}(r)$, the $r \times r$ orthogonal matrix. Hence Grassmann manifold is a quotient manifold of the Stiefel manifold. We consider the popular QR-based retraction, i.e. $R_{X}(U) = {\rm qf}(X+U)$ where for simplicity, we let $X$ to represent the equivalence class and $U$ represents the horizontal lift of the tangent vector. The inverse retraction is also based on QR factorization, i.e. $R^{-1}_X(Y) = Y (X^\top Y )^{-1} - X$. Vector transport is $\gT_{X}^Y U = U -  X X^\top U$.

\section{Riemannian Nesterov accelerated gradient methods}
\label{appendix_rnag}

Here we include the practical algorithms of Riemannian Nesterov accelerated gradient methods presented in \cite{zhang2018estimate,kim2022accelerated}. It is worth noticing that all the algorithms are analyzed under the exponential map and inverse exponential map and parallel transport. In contrast, the proposed RiemNA applies to general retraction and vector transport.

We first present the (constant-stepsize) RAGD method in \cite[Algorithm 2]{zhang2018estimate}, which is included in Algorithm \ref{ragd_algorithm}. We see the algorithm requires three times evaluation of the exponential map and two times the inverse exponential map.

\begin{algorithm}[!h]
 \caption{RAGD \cite{zhang2018estimate}}
 \label{ragd_algorithm}
 \begin{algorithmic}[1]
  \STATE \textbf{Input:} Initialization $x_0$, parameter $\beta > 0$, stepsize $h \leq \frac{1}{L}$, strong convexity parameter $\mu > 0$. 
  \STATE Initialize $v_0 = x_0$.
  \STATE Set $\alpha = \frac{\sqrt{\beta^2 + 4 (1 + \beta) \mu h} - \beta}{2}, \gamma = \frac{\sqrt{\beta^2 + 4 (1 + \beta) \mu h} - \beta}{\sqrt{\beta^2 + 4 (1 + \beta) \mu h} + \beta} \, \mu$, $\bar{\gamma} = (1 + \beta)\gamma$.
  \FOR{$k = 0, ..., K-1$}
    \STATE Compute $\alpha_k \in (0,1)$ from the equation $\alpha^2_k = h_k ((1- \alpha_k) \gamma_k + \alpha_k \mu)$.
    \STATE $y_k = \Exp_{x_k} \big( \frac{\alpha \gamma}{\gamma + \alpha \mu} \Exp^{-1}_{x_k}(v_k) \big)$
    \STATE $x_{k+1} = \Exp_{y_k}(- h \, \grad f(y_k))$ 
    \STATE $v_{k+1} = \Exp_{y_k} \big( \frac{(1-\alpha)\gamma}{\bar{\gamma}} \Exp^{-1}_{y_k}(v_k) - \frac{\alpha}{\bar{\gamma}} \grad f(y_k) \big)$
  \ENDFOR
  \STATE \textbf{Output:} $x_K$
 \end{algorithmic} 
\end{algorithm}

Below we present RNAG-C (Algorithm \ref{rnag_c_algorithm}), which is designed for geodesic convex functions and RNAG-SC (Algorithm \ref{rnag_sc_algorithm}) which is for geodesic strongly convex functions in \cite{kim2022accelerated}. We observe the algorithms require two times evaluation of the exponential map, inverse exponential map as well as parallel transport.

\begin{algorithm}[!th]
 \caption{RNAG-C \cite{kim2022accelerated}}
 \label{rnag_c_algorithm}
 \begin{algorithmic}[1]
  \STATE \textbf{Input:} Initialization $x_0$, parameters $\xi, T > 0$, stepsize $s \leq \frac{1}{L}$.
  \STATE Initialize $\bar{v}_0 = 0 \in T_{x_0}\M$.
  \STATE Set $\lambda_k = \frac{k + 2 \xi + T}{2}$.
  \FOR{$k = 0, ..., K-1$}
    \STATE $y_k = \Exp_{x_k} \big( \frac{\xi}{\lambda_k + \xi - 1} \bar{v}_k \big)$
    \STATE $x_{k+1} = \Exp_{y_k}(- s \, \grad f(y_k))$ 
    \STATE $v_k = \Gamma_{x_k}^{y_k} \big( \bar{v}_k - \Exp^{-1}_{x_k}(y_k) \big)$ 
    \STATE $\bar{\bar{v}}_{k+1} = v_k - \frac{s \lambda_k}{\xi} \grad f(y_k)$
    \STATE $\bar{v}_{k+1} = \Gamma_{y_k}^{x_{k+1}} \big( \bar{\bar{v}}_{k+1} - \Exp^{-1}_{y_k}(x_{k+1}) \big)$
  \ENDFOR
  \STATE \textbf{Output:} $x_K$
 \end{algorithmic} 
\end{algorithm}

\begin{algorithm}[!th]
 \caption{RNAG-SC \cite{kim2022accelerated}}
 \label{rnag_sc_algorithm}
 \begin{algorithmic}[1]
  \STATE \textbf{Input:} Initialization $x_0$, parameter $\xi$, stepsize $s \leq \frac{1}{L}$, strong convexity parameter $\mu$.
  \STATE Set $q = \mu s$.
  \FOR{$k = 0, ..., K-1$}
    \STATE $y_k = \Exp_{x_k} \big( \frac{\sqrt{\xi q}}{1 + \sqrt{\xi q}} \bar{v}_k\big)$ 
    \STATE $x_{k+1} = \Exp_{y_k} \big( - s \, \grad f(y_k))$
    \STATE $v_k = \Gamma_{x_k}^{y_k} \big( \bar{v}_k - \Exp^{-1}_{x_k}(y_k) \big)$
    \STATE $\bar{\bar{v}}_{k+1} = \big( 1 - \sqrt{\frac{q}{\xi}} \big) v_k + \sqrt{\frac{q}{\xi}} \big( - \frac{1}{\mu} \grad f(y_k)\big)$
    \STATE $\bar{v}_{k+1} = \Gamma_{y_k}^{x_{k+1}} \big( \bar{\bar{v}}_{k+1} - \Exp^{-1}_{y_k}(x_{k+1}) \big)$
  \ENDFOR
  \STATE \textbf{Output:} $x_K$
 \end{algorithmic} 
\end{algorithm}

\section{Function classes on Riemannian manifolds}
\label{function_class_appendix}
This section briefly reviews the various functions classes on Riemannian manifolds. 

\subsection{Geodesic gradient Lipschitzness and Hessian Lipschitzness}
\label{appendix_geodesic_grad_hess}
First we provide several equivalent characterizations for the gradient and Hessian Lipschitzness. For proof and more detailed discussions, see \cite[Section 10.4]{boumal2020introduction}.
\begin{lemma}[Geodesic gradient Lipschitzness and function smoothness]
\label{gradient_lipschitz_prop}
A function $f: \M \xrightarrow{} \sR$ has geodesic $L$-Lipschitz gradient in $\gX \subseteq$ if for all $x, y = \Exp_x(u) \in \gX$ in the domain of the exponential map, we have 
\begin{equation*}
    \| \Gamma_{\gamma(t)}^x \grad f(\gamma(t)) - \grad f(x) \|_{x} \leq L \| tu \|_x,
\end{equation*}
for all $t \in [0,1]$ and $\gamma(t) \coloneqq \Exp_x(tu)$. This is equivalent to function having bounded Hessian as $\| \hess f(x) \|_x \coloneqq \max_{u\in T_x\M : \| u \|_x = 1} \| \hess f(x) [u]\|_x \leq L$, where $\| \hess f(x) \|_x$ denotes the operator norm of Riemannian Hessian. If function $f$ has geodesic $L$-Lipschitz gradient, then function $f$ is geodesic $L$-smooth, which satisfies
\begin{equation*}
    |f(y) - f(x) - \langle \grad f(x), u\rangle_{x}| \leq \frac{L}{2} \| u\|^2_x. 
\end{equation*}
\end{lemma}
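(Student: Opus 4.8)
The plan is to reduce every claim to a one-dimensional statement along the geodesic $\gamma(t) = \Exp_x(tu)$ joining $x$ to $y$, exploiting two structural facts: the Riemannian Hessian is by definition the covariant derivative of the gradient, and parallel transport is isometric. The central identity I would establish is that the pulled-back gradient field $h(t) \coloneqq \Gamma_{\gamma(t)}^x \grad f(\gamma(t)) \in T_x\M$ satisfies $h'(t) = \Gamma_{\gamma(t)}^x \big( \hess f(\gamma(t))[\gamma'(t)] \big)$, since the covariant derivative of $\grad f$ along $\gamma$ equals $\hess f(\gamma(t))[\gamma'(t)]$ and differentiating a parallel-transported field amounts to transporting its covariant derivative. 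Because $\gamma$ is a geodesic, its speed $\|\gamma'(t)\|_{\gamma(t)} = \|u\|_x$ is constant and $\gamma'(t) = \Gamma_x^{\gamma(t)} u$; both facts will be used repeatedly.

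For the equivalence of the gradient Lipschitz condition with bounded Hessian, I would integrate this identity. Writing $\Gamma_{\gamma(t)}^x \grad f(\gamma(t)) - \grad f(x) = \int_0^t \Gamma_{\gamma(s)}^x(\hess f(\gamma(s))[\gamma'(s)]) \, ds$ and taking norms, isometry of parallel transport and the operator-norm bound $\|\hess f\|\leq L$ bound the right-hand side by $L\|u\|_x t = L\|tu\|_x$, giving the Lipschitz inequality. For the converse I would divide the Lipschitz inequality by $t$ and let $t\to 0$: the left side tends to $\|h'(0)\|_x = \|\hess f(x)[u]\|_x$ while the right side tends to $L\|u\|_x$, and since $u$ is arbitrary this yields $\|\hess f(x)\|_x \leq L$.

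For the smoothness bound, I would set $g(t) = f(\gamma(t))$ and use $f(y) - f(x) - \langle \grad f(x), u\rangle_x = g(1) - g(0) - g'(0) = \int_0^1 (g'(t) - g'(0)) \, dt$. Here $g'(t) = \langle \grad f(\gamma(t)), \gamma'(t)\rangle_{\gamma(t)} = \langle \Gamma_{\gamma(t)}^x \grad f(\gamma(t)), u\rangle_x$, using $\gamma'(t) = \Gamma_x^{\gamma(t)} u$ and isometry, so $g'(t) - g'(0) = \langle \Gamma_{\gamma(t)}^x \grad f(\gamma(t)) - \grad f(x), u\rangle_x$. Cauchy--Schwarz combined with the already-established Lipschitz bound gives $|g'(t) - g'(0)| \leq L t \|u\|_x^2$, and integrating over $[0,1]$ produces the claimed $\tfrac{L}{2}\|u\|_x^2$.

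The only delicate point is justifying the covariant-derivative computation $h'(t) = \Gamma_{\gamma(t)}^x(\hess f(\gamma(t))[\gamma'(t)])$: one must recall that differentiating a parallel-transported vector field along $\gamma$ returns the parallel transport of its covariant derivative, and invoke the defining property that $\hess f$ is the covariant derivative of $\grad f$. Everything else is routine single-variable calculus once the geodesic parametrization and the constancy of $\|\gamma'(t)\|_{\gamma(t)}$ are in place.
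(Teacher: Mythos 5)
Your proposal is correct and is essentially the standard argument: the paper itself gives no proof of this lemma, deferring instead to \cite[Section 10.4]{boumal2020introduction}, and the proof there proceeds exactly as you describe --- differentiating the parallel-transported gradient field along the geodesic to get $\hess f(\gamma(t))[\gamma'(t)]$, integrating for one direction, taking $t \to 0$ for the converse, and applying the fundamental theorem of calculus to $g(t) = f(\gamma(t))$ together with Cauchy--Schwarz for the smoothness bound. The only point worth making explicit is that the converse direction requires the geodesic segment from $x$ to $y$ to lie in $\gX$ so that the Hessian bound is available along the whole curve, which is guaranteed by the stated domain hypotheses.
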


\begin{lemma}[Geodesic Hessian Lipschitzness]
\label{hessian_lipschitz_prop}
A function $f$ has geodesic $\rho$-Lipschitz Hessian in $\gX \subseteq \M$ if for all $x, y = \Exp_x(u) \in \gX$ in the domain of the exponential map, we have
\begin{equation*}
    \| \Gamma_{\gamma(t)}^x \circ \hess f(\gamma(t)) \circ \Gamma_x^{\gamma(t)} - \hess f(x) \|_x \leq \rho \| t u\|^3_x,
\end{equation*}
for all $t \in [0,1]$ and $\gamma(t) \coloneqq \Exp_x(tu)$. If function $f$ has geodesic $\rho$-Lipschitz Hessian, then function $f$ satisfies 
\begin{align*}
    &|f(y) - f(x) - \langle \grad f(x), u \rangle_{x} - \frac{1}{2} \langle u, \hess f(x)[u]\rangle_{x} | \leq \frac{\rho}{6} \| u \|_x^3 \\
    &\| \Gamma_y^x \grad f(y) - \grad f(x) - \hess f(x)[u] \|_x \leq \frac{\rho}{2} \| u \|^2.
\end{align*}
\end{lemma}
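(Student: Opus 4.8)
The plan is to reduce everything to one-dimensional calculus along the connecting geodesic $\gamma(t) = \Exp_x(tu)$, $t \in [0,1]$, and then insert the Hessian-Lipschitz bound as a bound on a single operator norm. The two objects I would set up first are the scalar function $g(t) \coloneqq f(\gamma(t))$ and the back-transported gradient field $h(t) \coloneqq \Gamma_{\gamma(t)}^x \grad f(\gamma(t)) \in T_x\M$. Since $\gamma$ is a geodesic its velocity is parallel, so $\gamma'(t) = \Gamma_x^{\gamma(t)} u$ and $\frac{D}{dt}\gamma'(t) = 0$. Using metric compatibility of the Levi-Civita connection together with the definition of the Hessian as the covariant derivative of the gradient, I get $g'(t) = \langle \grad f(\gamma(t)), \gamma'(t)\rangle_{\gamma(t)}$ and, after the acceleration term drops out, $g''(t) = \langle \hess f(\gamma(t))[\gamma'(t)], \gamma'(t)\rangle_{\gamma(t)}$. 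In particular $g'(0) = \langle \grad f(x), u\rangle_x$ and $g''(0) = \langle u, \hess f(x)[u]\rangle_x$. By isometry of parallel transport, $g''(t) = \langle (\Gamma_{\gamma(t)}^x \circ \hess f(\gamma(t)) \circ \Gamma_x^{\gamma(t)})[u], u\rangle_x$, which is exactly the operator appearing in the hypothesis.

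For the first (function-value) inequality I would use the second-order Taylor identity with integral remainder,
\[
g(1) = g(0) + g'(0) + \int_0^1 (1-t)\, g''(t)\, dt,
\]
and subtract $\tfrac12 g''(0) = \int_0^1 (1-t)\, g''(0)\, dt$ to obtain the clean remainder $\int_0^1 (1-t)[g''(t)-g''(0)]\,dt$. By Cauchy--Schwarz and the definition of the operator norm, $|g''(t)-g''(0)| \leq \| \Gamma_{\gamma(t)}^x \circ \hess f(\gamma(t)) \circ \Gamma_x^{\gamma(t)} - \hess f(x)\|_x \,\|u\|_x^2$, and the Hessian-Lipschitz hypothesis bounds this by $\rho\, t\,\|u\|_x^3$ (reading the Lipschitz bound as $\rho\,d(\gamma(t),x)=\rho\,t\|u\|_x$). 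Since $\int_0^1 t(1-t)\,dt = \tfrac16$, the remainder is at most $\tfrac{\rho}{6}\|u\|_x^3$, which is the first claim.

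For the gradient inequality I would differentiate $h$. A standard fact is $\frac{d}{dt}\big[\Gamma_{\gamma(t)}^x V(t)\big] = \Gamma_{\gamma(t)}^x \frac{DV}{dt}(t)$ for any vector field $V$ along $\gamma$; applied to $V = \grad f \circ \gamma$ this gives $h'(t) = (\Gamma_{\gamma(t)}^x \circ \hess f(\gamma(t)) \circ \Gamma_x^{\gamma(t)})[u]$, hence $h'(0) = \hess f(x)[u]$. The fundamental theorem of calculus then yields $\Gamma_y^x \grad f(y) - \grad f(x) - \hess f(x)[u] = \int_0^1 [h'(t)-h'(0)]\,dt$, and bounding the integrand by the same operator-norm estimate gives $\|h'(t)-h'(0)\|_x \leq \rho\, t\,\|u\|_x^2$. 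Integrating and using $\int_0^1 t\,dt = \tfrac12$ yields $\tfrac{\rho}{2}\|u\|_x^2$, the second claim.

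The main obstacle is not the calculus but the two differential-geometric identities that drive it: the derivative-through-parallel-transport rule $\frac{d}{dt}[\Gamma_{\gamma(t)}^x V] = \Gamma_{\gamma(t)}^x \frac{DV}{dt}$ and the formula $g''(t) = \langle \hess f(\gamma(t))[\gamma'(t)], \gamma'(t)\rangle$, both of which rest on metric compatibility, the isometry of parallel transport, and the geodesic equation $\frac{D}{dt}\gamma'=0$. I would also flag that the exponent in the stated Lipschitz condition should read $\rho\,\|tu\|_x = \rho\,t\|u\|_x$ (matching $d(\gamma(t),x)$); with the literal cube the orders of the two conclusions would not come out as $\tfrac{\rho}{6}\|u\|_x^3$ and $\tfrac{\rho}{2}\|u\|_x^2$, so I treat the cube as a typo.
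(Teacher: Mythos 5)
Your proof is correct. The paper does not actually prove this lemma---it defers to Section~10.4 of Boumal's book---and your argument (second-order Taylor expansion with integral remainder of $g(t)=f(\gamma(t))$ along the geodesic, together with the identity $\tfrac{d}{dt}\big[\Gamma_{\gamma(t)}^x V(t)\big]=\Gamma_{\gamma(t)}^x\tfrac{DV}{dt}(t)$ for the gradient bound) is precisely the standard argument given in that reference, with all the geometric ingredients (geodesic equation, metric compatibility, isometry of parallel transport) correctly invoked. You are also right to flag the exponent: the Lipschitz-Hessian hypothesis must read $\rho\|tu\|_x$ rather than $\rho\|tu\|_x^3$ for the conclusions $\tfrac{\rho}{6}\|u\|_x^3$ and $\tfrac{\rho}{2}\|u\|_x^2$ to follow (with the literal cube the remainder integrals give $\tfrac{\rho}{20}\|u\|_x^5$ and $\tfrac{\rho}{4}\|u\|_x^4$ instead); the cube is a typo, consistent with the first-power bound used in the companion gradient-Lipschitz lemma.
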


\subsection{Retraction gradient Lipschitzness and Hessian Lipschitzness}
\label{appendix_retr_lipschitz}

In this section, we define the gradient and Hessian Lipschitzness with respect to a retraction, which generalizes the definitions in Section \ref{appendix_geodesic_grad_hess}.

\begin{definition}[Retraction gradient Lipschitzness]
A function $f: \M \xrightarrow{} \sR$ has retraction $L$-Lipschitz gradient in $\gX \subseteq \M$ if for all $x,y =R_{x}(u) \in \gX$, we have 
\begin{equation*}
    \| \Gamma_{c(t)}^x \grad f(c(t)) - \grad f(x)  \|_{x} \leq L \| t u \|_{x}
\end{equation*}
where we denote $c(t) = R_{x}(tu)$. 
\end{definition}

\begin{definition}[Retraction Hessian Lipschitzness]
A function $f: \M \xrightarrow{} \sR$ has retraction $\rho$-Lipschitz Hessian in $\gX \subseteq \M$ if for all $x, y \in R_{x}(u) \in \gX$ in the domain of the retraction, we have 
\begin{equation*}
    \| \Gamma_{c(t)}^x \circ \hess f(c(t)) \circ \Gamma_x^{c(t)} - \hess f(x)  \|_{x} \leq \rho \| t u \|_x^3,
\end{equation*}
where we denote $c(t) = R_{x}(tu)$. 
\end{definition}

\subsection{Geodesic convexity and strong convexity}

We start with the notion of \textit{geodesic convex set}. A subset $\gX \subseteq \M$ is called geodesic convex if for any two points in the set, there exists a geodesic joining them that lies entirely within the set. 

\begin{definition}[Geodesic (strong) convexity]
A function $f: \gX \xrightarrow{} \sR$ is geodesic convex in a geodesic convex set $\gX$ if for any geodesic $\gamma: [0,1] \xrightarrow{} \gX$, we have $f(\gamma(t)) \leq (1-t) f(x) + t f(y)$ where we let $x = \gamma(0), y = \gamma(1)$. The function is geodesic $\mu$-strongly convex if $(f \circ \gamma)''(t) \geq \mu d^2(x, y)$ for all $t \in [0,1]$. This is equivalent to $\hess f(x) \succeq \mu \,  \id$ for all $x \in \gX$.
\end{definition}

A similar notion of convexity with respect to retraction also exists by replacing the geodesic curve $\gamma(t)$ with retraction curve $c(t) = \Retr_x(tu)$. See \cite{huang2015broyden} for more details.

\section{Main proofs}

Before we proceed with the proofs of the results in the paper, we introduce a lemma that will be used often in the course of the proof. 

\begin{lemma}
\label{diff_bound_gammatransport}
Under Assumption \ref{assump_normal_nei}, for any $w, x, y, z \in \gX$, we have $\| \Gamma_{w}^x \Gamma_{y}^w \Exp^{-1}_y(z) - \big( \Exp_x^{-1}(z) - \Exp_{x}^{-1}(y)  \big) \|_x \leq C_0 d(y,w) d(w,x) d(y,z) + C_2 \min\{ d(y,z), d(x,y) \} C_\kappa\big( d(y,z) + d(x,y) \big)$.
\end{lemma}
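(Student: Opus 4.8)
The plan is to introduce the intermediate vector $\Gamma_y^x \Exp_y^{-1}(z) \in T_x\M$ and split the target quantity by the triangle inequality into
\[
\big\| \Gamma_w^x \Gamma_y^w \Exp_y^{-1}(z) - \Gamma_y^x \Exp_y^{-1}(z) \big\|_x + \big\| \Gamma_y^x \Exp_y^{-1}(z) - \big( \Exp_x^{-1}(z) - \Exp_x^{-1}(y) \big) \big\|_x ,
\]
and then to match these two summands with the two terms on the right-hand side of the claimed bound. For the first summand I would apply part~(1) of Lemma~\ref{lemma_metric_distort} with the ordered triple $(y, w, x)$ playing the role of $(x,y,z)$ there and the tangent vector $\Exp_y^{-1}(z) \in T_y\M$. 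Since parallel transport is isometric, $\| \Exp_y^{-1}(z) \|_y = d(y,z)$, and this yields the bound $C_0\, d(y,w)\, d(w,x)\, d(y,z)$, which is exactly the first term.

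The second summand is the crux. Write $u = \Exp_x^{-1}(y)$, so that $y = \Exp_x(u)$ and $\|u\|_x = d(x,y)$, and set $v = \Gamma_y^x \Exp_y^{-1}(z) \in T_x\M$, chosen precisely so that $\Exp_y(\Gamma_x^y v) = \Exp_y(\Exp_y^{-1}(z)) = z$. Lemma~\ref{lemma_move_vectorsum} then gives
\[
d\big( \Exp_x(u+v),\, z \big) \leq \min\{ \|u\|_x, \|v\|_x \}\, C_\kappa(\|u\|_x + \|v\|_x) = \min\{ d(x,y), d(y,z) \}\, C_\kappa\big( d(x,y) + d(y,z) \big),
\]
using $\|v\|_x = \|\Exp_y^{-1}(z)\|_y = d(y,z)$ by isometry. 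Since $u + v = \Exp_x^{-1}\big(\Exp_x(u+v)\big)$, applying the upper bound in part~(2) of Lemma~\ref{lemma_metric_distort} with base point $x$ to the pair $z$ and $\Exp_x(u+v)$ converts this distance bound into
\[
\big\| \Exp_x^{-1}(z) - (u+v) \big\|_x \leq C_2\, d\big( z,\, \Exp_x(u+v) \big),
\]
and substituting $u + v = \Exp_x^{-1}(y) + \Gamma_y^x \Exp_y^{-1}(z)$ recovers the second summand, bounded by $C_2 \min\{d(y,z), d(x,y)\}\, C_\kappa(d(y,z)+d(x,y))$. Adding the two summands completes the argument.

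The main obstacle is this second step: the translation between a \emph{distance} bound (the output of Lemma~\ref{lemma_move_vectorsum}) and the \emph{tangent-vector} bound demanded by the statement. The key device is the choice $v = \Gamma_y^x \Exp_y^{-1}(z)$, which simultaneously forces $\Exp_y(\Gamma_x^y v) = z$ and makes $\Exp_x(u+v)$ the point whose inverse-exponential image equals $u+v$; the bi-Lipschitz estimate of Lemma~\ref{lemma_metric_distort}(2) then bridges the two regimes. A minor technical point to verify is that $\Exp_x(u+v)$ lies in $\gX$ so that Lemma~\ref{lemma_metric_distort}(2) applies; this follows because $\Exp_x(u+v)$ is within distance $\min\{d(x,y),d(y,z)\}\,C_\kappa(d(x,y)+d(y,z))$ of $z \in \gX$ while the vectors $u$ and $v$ have controlled norms inside the unique-geodesic neighbourhood guaranteed by Assumption~\ref{assump_normal_nei}.
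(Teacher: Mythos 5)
Your proof is correct and follows essentially the same route as the paper: the identical triangle-inequality split at the intermediate vector $\Gamma_y^x \Exp_y^{-1}(z)$, with Lemma \ref{lemma_metric_distort}(1) giving the first term and the combination of Lemma \ref{lemma_metric_distort}(2) with Lemma \ref{lemma_move_vectorsum} (applied at the comparison point $\Exp_x\big(\Exp_x^{-1}(y) + \Gamma_y^x \Exp_y^{-1}(z)\big)$) giving the second. Your explicit remark that this comparison point must lie in $\gX$ for Lemma \ref{lemma_metric_distort}(2) to apply is a point the paper leaves implicit.
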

\begin{proof}[Proof of Lemma \ref{diff_bound_gammatransport}]
\begin{align*}
    &\| \Gamma_{w}^x \Gamma_{y}^w \Exp^{-1}_y(z) - \big( \Exp_x^{-1}(z) - \Exp_{x}^{-1}(y) \big)  \|_x \\
    &\leq \| \Gamma_{w}^x \Gamma_{y}^w \Exp^{-1}_y(z) - \Gamma_{y}^x \Exp^{-1}_y(z) \|_x + \| \Gamma_{y}^x \Exp^{-1}_y(z) - \big( \Exp_x^{-1}(z) -\Exp_{x}^{-1}(y)  \big)  \|_x \\
    &\leq C_0 d(y,w) d(w,x) d(y,z) + C_2 d\Big( \Exp_x \big( \Gamma_y^x \Exp^{-1}_y(z) + \Exp^{-1}_x(y) \big), z \Big) \\
    &\leq C_0 d(y,w) d(w,x) d(y,z) + C_2 d\Big( \Exp_x \big( \Gamma_y^x \Exp^{-1}_y(z) + \Exp^{-1}_x(y) \big), \Exp_y(\Exp_y^{-1}(z)) \Big) \\
    &\leq C_0 d(y,w) d(w,x) d(y,z) + C_2 \min\{ d(y,z), d(x,y) \} C_\kappa\big( d(y,z) + d(x,y) \big). 
\end{align*}
where we apply Lemma \ref{lemma_move_vectorsum} and \ref{lemma_metric_distort}.
\end{proof}

\subsection{Proof of Proposition \ref{prop_cstar_derivation}}

\begin{proof}[Proof of Proposition \ref{prop_cstar_derivation}]
Let $\mu \in \sR$ be the dual variable. Then we have $c^*, \mu^*$ satisfy the KKT system:
\begin{align*}
    \begin{bmatrix}
        2 (R + \lambda I)  & 1 \\
        1^\top & 0
    \end{bmatrix}
    \begin{bmatrix}
        c^* \\ \mu^*
    \end{bmatrix}
    = 
    \begin{bmatrix}
        0 \\ 1
    \end{bmatrix}
\end{align*}
Solving the system yields the desired result. 
\end{proof}

\subsection{Proof of Lemma \ref{lin_iter_approx_lemma}}

\begin{proof}[Proof of Lemma \ref{lin_iter_approx_lemma}]
First, we consider the pushforward operator $\Exp_x^y: T_x\M \xrightarrow{} T_y\M$ for any $x, y \in \M$, defined as $\Exp_x^y(u) \coloneqq \Exp_y^{-1} (\Exp_x(u))$ for any $u \in T_x\M$. The differential of $\Exp_x^y$ at $0$ along $u \in T_x\M$ is derived as 
\begin{align*}
    \D \Exp_x^y(0)[u] = \D \Exp^{-1}_y (\Exp_x(0 )) [ \D \Exp_x(0) [u] ] = \D \Exp^{-1}_y (x) [u] &= [ \D \Exp_y (\Exp^{-1}_y(x)) ]^{-1} [u] \\
    &= ( T_y^x )^{-1}[u]
\end{align*}
where we denote $T_x^y (v) = \D \Exp_x (\Exp^{-1}_x(y))[v] \in T_y\M$ for $v \in T_x\M$. The second equality is due to $\Exp_x(0) = 0, \D\Exp_x(0) = \id$ and the third equality follows from the inverse function theorem. Then by Taylor's theorem for $\Exp_{x_i}^{x^*}$ around $0$, we have 
\begin{align}
    \Exp_{x_*}^{-1}(x_{i+1})  &= \Exp_{x_i}^{x^*}( \Exp^{-1}_{x_i}(x_{i+1}) ) \nonumber\\
    &= \Exp_{x_i}^{x^*}(0) + \D \Exp_{x_i}^{x^*}(0)[ \Exp^{-1}_{x_i}(x_{i+1}) ]  + \frac{1}{2} \D^2 \Exp_{x_i}^{x^*} (\zeta_i)[ \Exp^{-1}_{x_i}(x_{i+1}),\Exp^{-1}_{x_i}(x_{i+1}) ] \nonumber\\
    &= \Exp_{x^*}^{-1}(x_i) - \eta (T_{x^*}^{x_i})^{-1}[ \grad f(x_i) ] + \frac{\eta^2}{2} \D^2 \Exp_{x_i}^{x^*} (\zeta_i) [\grad f(x_i), \grad f(x_i)] \nonumber\\
    &= \Exp_{x^*}^{-1}(x_i) - \eta (T_{x^*}^{x_i})^{-1}[ \grad f(x_i) ] + \frac{\eta^2}{2} \epsilon_i \label{lin_recur_temp1}
\end{align}
for some $\zeta_i = s \Exp^{-1}_{x_i}(x_{i+1})$, $s \in (0,1)$. We let $\epsilon_i \coloneqq \D^2 \Exp_{x_i}^{x^*} (\zeta_i) [\grad f(x_i), \grad f(x_i)]$ with $\| \epsilon_i \|_{x^*} = O \big( \| \grad f(x_i) \|^2_{x_i} \big)$. Then by Hessian Lipschitzness (Lemma \ref{hessian_lipschitz_prop}), we have around $x^*$
\begin{equation}
    e_i \coloneqq \Gamma_{x_i}^{x^*} \grad f(x_i) - \hess f(x^*)[ \Exp^{-1}_{x^*}(x_i) ] \leq \frac{\rho}{2} \| \Exp^{-1}_{x^*}(x_i) \|^2_{x^*}. \label{hess_lip_ei}
\end{equation}
Combining \eqref{lin_recur_temp1} with \eqref{hess_lip_ei} yields
\begin{align}
    \Exp_{x_*}^{-1}(x_{i+1}) - \Exp_{x^*}^{-1}(x_i) &= - \eta (\Gamma_{x_i}^{x^*} T_{x^*}^{x_i})^{-1}[ \Gamma_{x_i}^{x^*} \grad f(x_i) ] + \frac{\eta^2}{2} \epsilon_i \nonumber\\
    &= - \eta (\Gamma_{x_i}^{x^*} T_{x^*}^{x_i})^{-1} [\hess f(x^*)[ \Exp^{-1}_{x^*}(x_i) ] + e_i] + \frac{\eta^2}{2} \epsilon_i. \label{temp_delta_diff}
\end{align}
To show the desired result, it remains to show the operator $(\Gamma_{x_i}^{x^*} T_{x^*}^{x_i})^{-1}$ is locally identity. This is verified in \cite[Lemma 6]{tripuraneni2018averaging} for general retraction. We restate here and adapt to the case of exponential map. 

Consider the function $H(u) \coloneqq (\Gamma_x^{\Exp_x(u)})^{-1} T_{x}^{\Exp_x(u)} : T_x\M \xrightarrow{} L(T_x\M)$, where $L(T_x\M)$ denotes the set of linear maps on $T_x\M$. Let $\gamma(t) = \Exp_x(tu)$. Then we have 
\begin{align*}
    \frac{d}{dt} H(tu) |_{t = 0} = \frac{d}{dt} (\Gamma_x^{\gamma(t)})^{-1} T_{x}^{\gamma(t)} |_{t=0} = \Big( (\Gamma_x^{\gamma(t)})^{-1} \frac{D}{dt} T_{x}^{\gamma(t)} \Big)|_{t=0} &= \big( \frac{D}{dt} \D \Exp_x(tu) \big)|_{t=0} \\
    &= \frac{D^2}{dt^2} \Exp_x(tu) |_{t=0} = 0.
\end{align*}
where the second equality is due to the property of parallel transport (see for example \cite[Proposition 10.37]{boumal2020introduction}). In addition, from \cite[Theorem A.2.9]{waldmann2012geometric}, we see the second order derivative of $H$ is given by $\frac{d^2}{dt^2} H(tu) |_{t=0} = \frac{1}{6} \Riem_x(u, \cdot) u$ where we denote $\Riem_x$ as the Riemann curvature tensor evaluated at $x$. We notice that $\Riem_x(u, \cdot)u: T_x\M \xrightarrow{} T_x\M$ is symmetric with respect to the Riemannian metric (see for example \cite{andrews2010ricci}).
 
For any $v \in T_x\M$, $H(u)[v] \in T_x\M$, we apply the Taylor's theorem for $H$ up to second order, which yields
\begin{equation*}
    H(u)[v] = v + \frac{1}{6} \Riem_x(u,v)u + O(\| u\|^3),
\end{equation*}
Let $x = x^*$ and $u = \Exp_{x^*}^{-1}(x_i) = \Delta_{x_i}$. Then we obtain for any $v \in T_{x^*}\M$, $H(u)[v] \in T_{x^*}\M$
\begin{equation}
    \Gamma_{x_i}^{x^*} T_{x^*}^{x_i}[v]  = v + \frac{1}{6} \Riem_{x^*} \big(\Delta_{x_i}, v \big)\Delta_{x_i} + O(\| \Delta_{x_i} \|^3). 
\end{equation}

It satisfies that $(\Gamma_{x_i}^{x^*} T_{x^*}^{x_i})^{-1} = \id - \frac{1}{6} \Riem_{x^*}\big( \Delta_{x_i}, \cdot \big) \Delta_{x_i} + O(\| \Delta_{x_i} \|^3)$. 
Substituting this result into \eqref{temp_delta_diff}, we obtain 
\begin{align*}
    &\Delta_{x_{i+1}} - \Delta_{x_{i}} \\
    &= - \eta \Big( \id - \frac{1}{6} \Riem_{x^*}\big( \Delta_{x_i}, \cdot \big) \Delta_{x_i} + O(\| \Delta_{x_i} \|^3) \Big) \big[\hess f(x^*)[ \Delta_{x_i} ] + e_i \big] + \frac{\eta^2}{2} \epsilon_i \\
    &= -\eta \, \hess f(x^*)[ \Delta_{x_i} ] - \eta e_i + \frac{\eta}{6} \Riem_{x^*} (\Delta_{x_i}, \hess f(x^*)[\Delta_{x_i}] + e_i) \Delta_{x_i}  + \frac{\eta^2}{2} \epsilon_i + O(\| \Delta_{x_i} \|^3).
\end{align*}
Let $\varepsilon_i = - \eta e_i +\frac{\eta}{6} \Riem_{x^*} (\Delta_{x_i}, \hess f(x^*)[\Delta_{x_i}] + e_i) \Delta_{x_i}  + \frac{\eta^2}{2} \epsilon_i + O(\| \Delta_{x_i} \|^3)$. We can bound the error term as follows.
\begin{equation*}
    \| \varepsilon_i \|^2_{x^*} = O( \| e_i\|^2_{x^*} +  \| \Delta_{x_i} \|_{x^*}^4 \| \grad f(x_i) \|^2_{x_i} + \| \epsilon_i \|^2_{x^*} + \| \Delta_{x_i} \|^6_{x^*} ) = O(\| \Delta_{x_i} \|^4),
\end{equation*}
where we use the bounds on $\| e_i\|_{x^*}, \| \epsilon_i\|_{x^*}$ as well as $\hess f(x^*)[\Delta_{x_i}] + e_i = \Gamma_{x_i}^{x^*} \grad f(x_i)$ and geodesic gradient Lipschitzness (Lemma \ref{gradient_lipschitz_prop}) such that $\| \grad f(x_i) \|^2 \leq L \| \Delta_i \|^2_{x^*}$. 
\end{proof}

\subsection{Proof of Lemma \ref{lemma_deviation_recur_tange_avera}}

\begin{proof}[Proof of Lemma \ref{lemma_deviation_recur_tange_avera}]
The proof is by induction. Let $\gamma_i = \frac{c_i}{\sum_{j=0}^i c_j}$ and first we rewrite the averaging on tangent space as following the recursion defined as $\widetilde{\Delta}_{x_i} = \widetilde{\Delta}_{x_{i-1}} + \gamma_i (\Delta_{x_i} - \widetilde{\Delta}_{x_{i-1}})$. As we have shown in Lemma \ref{waverage_recursion_lemma}, $\sum_{i=0}^k c_i \Delta_{x_i} = \widetilde{\Delta}_{x_k}$. To show the difference between $\Delta_{\bar{x}_{c,x}}$, based on Lemma \ref{lemma_metric_distort}, it suffices to show the distance between $\tilde{x}_k = \bar{x}_{c,x}$ and $\Exp_{x^*}(\widetilde{\Delta}_{x_k})$ is bounded. 

To this end, we first notice that $\tilde{x}_0 = x_0 = \Exp_{x^*}(\widetilde{\Delta}_{x_0})$ and we consider bounding the difference between $\tilde{x}_1$ and $\Exp_{x^*}(\widetilde{\Delta}_{x_1})$. To derive the bound, we first observe that by Lemma \ref{lemma_move_vectorsum},
\begin{align}
    &d \Big(\Exp_{x^*}(\widetilde{\Delta}_{x_1}), \Exp_{x_0} \big( \Gamma_{x^*}^{x_0} \gamma_1 (\Delta_{x_1} - \widetilde{\Delta}_{x_0}) \big) \Big) \nonumber\\
    &= d\Big(\Exp_{x^*}(\widetilde{\Delta}_{x_0} + \gamma_1 (\Delta_{x_1} - \widetilde{\Delta}_{x_0}) ), \Exp_{x_0} \big( \Gamma_{x^*}^{x_0} \gamma_1 (\Delta_{x_1} - \widetilde{\Delta}_{x_0}) \big) \Big) \nonumber\\
    &\leq d(x_0, x^*) C_\kappa \Big( \| \widetilde{\Delta}_{x_0} \|_{x^*} + \gamma_1 \| \Delta_{x_1} - \widetilde{\Delta}_{x_0} \|_{x^*} \Big), \label{temp_reprove_01}
\end{align}
where we see $x_0 = \Exp_{x^*}(\widetilde{\Delta}_{x_0})$ with $\widetilde{\Delta}_{x_0} = \Delta_{x_0}$. In addition,
\begin{align}
    d \Big( \tilde{x}_1 , \Exp_{x_0} \big( \Gamma_{x^*}^{x_0} \gamma_1 (\Delta_{x_1} - \widetilde{\Delta}_{x_0}) \big)\Big) &= d \Big( \Exp_{x_0}\big( \gamma_1 \Exp^{-1}_{x_0}(x_1) \big),  \Exp_{x_0} \big( \Gamma_{x^*}^{x_0} \gamma_1 (\Delta_{x_1} - \widetilde{\Delta}_{x_0}) \big) \Big) \nonumber\\
    &\leq \gamma_1 C_1 \| \Exp_{x_0}^{-1}(x_1) - \Gamma_{x^*}^{x_0} (\Delta_{x_1} - \Delta_{x_0})  \|_{x_0} \nonumber\\
    &\leq \gamma_1 C_1 C_2 d(x_0, x^*) C_\kappa \big( d(x_0, x_1) + d(x_0, x^*) \big). \label{temp_reprove_02}
\end{align}
where the last inequality is from the proof of Lemma \ref{diff_bound_gammatransport}. Thus combining \eqref{temp_reprove_01}, \eqref{temp_reprove_02} leads to
\begin{align*}
    &d \big(\tilde{x}_1, \Exp_{x^*}(\tilde{\Delta}_{x_1}) \big) \\
    &\quad \leq d(x_0, x^*) C_\kappa \Big( \| \widetilde{\Delta}_{x_0} \|_{x^*} + \gamma_1 \| \Delta_{x_1} - \widetilde{\Delta}_{x_0} \|_{x^*} \Big) + \gamma_1 C_1 C_2 d(x_0, x^*) C_\kappa \big( d(x_0, x_1) + d(x_0, x^*) \big).
\end{align*}
By noticing $C_\kappa(x) = O(x^2)$, we see $d(\tilde{x}_1, \Exp_{x^*}(\tilde{\Delta}_{x_1})) = O(d^3(x_0, x^*))$. 

Now suppose at $i \leq k-1$, we have $d(\tilde{x}_i, \Exp_{x^*}(\widetilde{\Delta}_{x_i})) = O(d^3(x_0, x^*))$ and we wish to show $d(\tilde{x}_{i+1}, \Exp_{x^*}(\widetilde{\Delta}_{x_{i+1}})) = O(d^3(x_0, x^*))$. To this end, we first see $\Exp_{x^*}(\widetilde{\Delta}_{x_{i+1}}) = \Exp_{x^*} \big(\widetilde{\Delta}_{x_i} + \gamma_{i+1} (\Delta_{x_{i+1}} - \widetilde{\Delta}_{x_i}) \big)$ and by Lemma \ref{lemma_move_vectorsum}
\begin{align*}
    &d \Big(\Exp_{x^*}\big(\widetilde{\Delta}_{x_i} + \gamma_{i+1} (\Delta_{x_{i+1}} - \widetilde{\Delta}_{x_i}) \big), \Exp_{\Exp_{x^*}(\widetilde{\Delta}_{x_i})} \big( \Gamma_{x^*}^{\Exp_{x^*}(\widetilde{\Delta}_{x_i})} \gamma_{i+1} (\Delta_{x_{i+1}} - \widetilde{\Delta}_{x_i}) \big) \Big) \nonumber\\
    &\leq \| \widetilde{\Delta}_{x_i} \|_{x^*} C_\kappa \big( \|  \widetilde{\Delta}_{x_i} \|_{x^*} + \gamma_{i+1} \| \Delta_{x_{i+1}} - \widetilde{\Delta}_{x_i} \|_{x^*} \big) \\
    &= O(d^3(x_0, x^*)),
\end{align*}
where the order of $O(d^3(x_0, x^*))$ is due to $C_\kappa(x) = O(x^2)$ and $\| \widetilde{\Delta}_{x_i} \|_{x^*} = O(d(x_0, x^*))$, which can be shown by induction.  

Further, noticing $\tilde{x}_{i+1} = \Exp_{\tilde{x}_i} \big( \gamma_{i+1} \Exp_{\tilde{x}_i}^{-1}(x_{i+1}) \big)$, we can show 
\begin{align}
    &d \Big(\tilde{x}_{i+1}, \Exp_{\Exp_{x^*}(\widetilde{\Delta}_{x_i})} \big( \Gamma_{x^*}^{\Exp_{x^*}(\widetilde{\Delta}_{x_i})} \gamma_{i+1} (\Delta_{x_{i+1}} - \widetilde{\Delta}_{x_i}) \big) \Big) \nonumber\\
    &\leq d \Big( \Exp_{\tilde{x}_i} \big( \gamma_{i+1} \Exp_{\tilde{x}_i}^{-1}(x_{i+1}) \big), \Exp_{\tilde{x}_i} \big( \Gamma_{x^*}^{\tilde{x}_i} \gamma_{i+1} (\Delta_{x_{i+1}} - \widetilde{\Delta}_{x_i} ) \big)  \Big) 
    \nonumber\\
    &\qquad + d \Big( \Exp_{\tilde{x}_i} \big( \Gamma_{x^*}^{\tilde{x}_i} \gamma_{i+1} (\Delta_{x_{i+1}} - \widetilde{\Delta}_{x_i} ) \big),  \Exp_{\Exp_{x^*}(\widetilde{\Delta}_{x_i})} \big( \Gamma_{x^*}^{\Exp_{x^*}(\widetilde{\Delta}_{x_i})} \gamma_{i+1} (\Delta_{x_{i+1}} - \widetilde{\Delta}_{x_i}) \big)   \Big). \label{temp_reprove_03}
\end{align}
The first term on the right of \eqref{temp_reprove_03} can be bounded as 
\begin{align}
    &d\Big( \Exp_{\tilde{x}_i} \big( \gamma_{i+1} \Exp_{\tilde{x}_i}^{-1}(x_{i+1}) \big), \Exp_{\tilde{x}_i} \big( \Gamma_{x^*}^{\tilde{x}_i} \gamma_{i+1} (\Delta_{x_{i+1}} - \widetilde{\Delta}_{x_i} ) \big)  \Big) \nonumber\\
    &\leq \gamma_{i+1} \| \Exp_{\tilde{x}_i}^{-1}(x_{i+1}) - \Gamma_{x^*}^{\tilde{x}_i} (\Delta_{x_{i+1}}  - \widetilde{\Delta}_{x_i})   \|_{\tilde{x}_i} \nonumber\\
    &= \gamma_{i+1} \| \Gamma_{\tilde{x}_i}^{x^*} \Exp_{\tilde{x}_i}^{-1}(x_{i+1}) - (\Delta_{x_{i+1}} - \Delta_{\tilde{x}_i} ) + (\widetilde{\Delta}_{x_i} - \Delta_{\tilde{x}_i} ) \|_{x^*} \nonumber\\
    &\leq \gamma_{i+1} \| \Gamma_{\tilde{x}_i}^{x^*} \Exp_{\tilde{x}_i}^{-1}(x_{i+1}) - (\Delta_{x_{i+1}} - \Delta_{\tilde{x}_i} ) \|_{x^*} + \gamma_{i+1}\| \widetilde{\Delta}_{x_i} - \Delta_{\tilde{x}_i}  \|_{x^*} \nonumber\\
    &\leq \gamma_{i+1} C_2 d(\tilde{x}_i, x^*) C_\kappa\big( d(x_{i+1}, \tilde{x}_i) + d(\tilde{x}_i, x^*) \big) + \gamma_{i+1} C_2 d(\tilde{x}_i, \Exp_{x^*}(\widetilde{\Delta}_{x_i})), \label{temp_reprove_04}
\end{align}
where we again use the result from the proof of Lemma \ref{diff_bound_gammatransport}. To see \eqref{temp_reprove_04} is on the order of $O(d^3(x_0, x^*))$, we only need to show $\| \Delta_{\tilde{x}_i} \|^2_{x^*} = d^2(\tilde{x}_i, x^*) =  O(d^2(x_0, x^*))$, which can be seen by a simple induction argument. First, it is clear that $\| \Delta_{\tilde{x}_0} \|^2_{x^*} = d^2(x_0, x^*)$. Then suppose for any $i < k$, we have $d(\tilde{x}_i, x^*) = O(d(x_0, x^*))$. Then from Lemma \ref{lemma_metric_distort}, we have
\begin{align*}
    d(\tilde{x}_{i+1}, x^*) &\leq C_1 \| \Exp^{-1}_{\tilde{x}_i}(\tilde{x}_{i+1}) - \Exp_{\tilde{x}_i}^{-1}(x^*)  \|_{\tilde{x}_i} \leq \frac{C_1 c_{i+1}}{\sum_{j=0}^{i+1} c_j} d( \tilde{x}_i, x_{i+1}) + d(\tilde{x}_i, x^*) \\
    &\leq \big(  \frac{C_1 c_{i+1}}{\sum_{j=0}^{i+1} c_j} + 1 \big) d(\tilde{x}_i, x^*) + d(x_{i+1}, x^*) = O(d(x_0, x^*)). 
\end{align*}
Thus, using $d(\tilde{x}_i, \Exp_{x^*}(\widetilde{\Delta}_{x_i})) = O(d^3(x_0, x^*))$, we see \eqref{temp_reprove_04} is on the order of $O(d^3(x_0, x^*))$. 

Now we bound the second term on the right of \eqref{temp_reprove_03}. Particularly, 
\begin{align}
    &d \Big( \Exp_{\tilde{x}_i} \big( \Gamma_{x^*}^{\tilde{x}_i} \gamma_{i+1} (\Delta_{x_{i+1}} - \widetilde{\Delta}_{x_i} ) \big),  \Exp_{\Exp_{x^*}(\widetilde{\Delta}_{x_i})} \big( \Gamma_{x^*}^{\Exp_{x^*}(\widetilde{\Delta}_{x_i})} \gamma_{i+1} (\Delta_{x_{i+1}} - \widetilde{\Delta}_{x_i}) \big)   \Big) \nonumber\\
    &\leq d \Big( \Exp_{\tilde{x}_i} \big( \Gamma_{x^*}^{\tilde{x}_i} \gamma_{i+1} (\Delta_{x_{i+1}} - \widetilde{\Delta}_{x_i} ) \big),  
    \Exp_{\tilde{x}_i} \big( \Gamma^{\tilde{x}_i}_{\Exp_{x^*}(\widetilde{\Delta}_{x_i})} \Gamma_{x^*}^{\Exp_{x^*}(\widetilde{\Delta}_{x_i})}  \gamma_{i+1}(\Delta_{x_{i+1}} - \widetilde{\Delta}_{x_i}) \big) 
    \Big) \nonumber\\
    &\qquad + d\Big( \Exp_{\tilde{x}_i} \big( \Gamma^{\tilde{x}_i}_{\Exp_{x^*}(\widetilde{\Delta}_{x_i})} \Gamma_{x^*}^{\Exp_{x^*}(\widetilde{\Delta}_{x_i})}  \gamma_{i+1}(\Delta_{x_{i+1}} - \widetilde{\Delta}_{x_i}), \Exp_{\Exp_{x^*}(\widetilde{\Delta}_{x_i})} \big( \Gamma_{x^*}^{\Exp_{x^*}(\widetilde{\Delta}_{x_i})} \gamma_{i+1} (\Delta_{x_{i+1}} - \widetilde{\Delta}_{x_i}) \big)  \Big) \nonumber\\
    &\leq  \gamma_{i+1}C_1 C_0 \| \widetilde{\Delta}_{x_i} \|_{x^*} d(\tilde{x}_i, \Exp_{x^*}\big(\widetilde{\Delta}_{x_i}) \big) \| \Delta_{x_{i+1}} - \widetilde{\Delta}_{x_i} \|_{x^*} + C_3 d(\tilde{x}_i, \Exp_{x^*}(\widetilde{\Delta}_{x_i})) \nonumber\\
    &= O(d^3(x_0, x^*)), \nonumber
\end{align}
where we apply Lemma \ref{lemma_metric_distort} multiple times. Combining the previous results, we see 
\begin{align*}
    &d(\tilde{x}_{i+1}, \Exp_{x^*}(\widetilde{\Delta}_{x_{i+1}})) \\
    &\leq  d \Big(\tilde{x}_{i+1}, \Exp_{\Exp_{x^*}(\widetilde{\Delta}_{x_i})} \big( \Gamma_{x^*}^{\Exp_{x^*}(\widetilde{\Delta}_{x_i})} \gamma_{i+1} (\Delta_{x_{i+1}} - \widetilde{\Delta}_{x_i}) \big) \Big) \\
    &\qquad + d \Big(\Exp_{x^*}\big(\widetilde{\Delta}_{x_i} + \gamma_{i+1} (\Delta_{x_{i+1}} - \widetilde{\Delta}_{x_i}) \big), \Exp_{\Exp_{x^*}(\widetilde{\Delta}_{x_i})} \big( \Gamma_{x^*}^{\Exp_{x^*}(\widetilde{\Delta}_{x_i})} \gamma_{i+1} (\Delta_{x_{i+1}} - \widetilde{\Delta}_{x_i}) \big) \Big) \\
    &= O(d^3(x_0, x^*))
\end{align*}
Now applying Lemma \ref{lemma_metric_distort}, we obtain 
\begin{equation*}
    \| \Delta_{\tilde{x}_{i+1}} - \widetilde{\Delta}_{x_{i+1}} \|_{x^*} \leq C_2 d(\tilde{x}_{i+1}, \Exp_{x^*}(\widetilde{\Delta}_{x_{i+1}})) = O(d^3(x_0, x^*))
\end{equation*}
for all $i \leq k-1$. Let $i = k-1$ we have $\| \Delta_{\tilde{x}_k} - \widetilde{\Delta}_{x_k} \|_{x^*} = \| \Delta_{\bar{x}_{c,x}} - \sum_{i=0}^k c_i \Delta_{x_i} \|_{x^*} = O(d^3(x_0, x^*))$. Thus the proof is complete.
\end{proof}

\subsection{Proof of Lemma \ref{error_lin_term}}

\begin{proof}[Proof of Lemma \ref{error_lin_term}]
Directly combining Lemma \ref{convergence_lin_iterate_theorem} and Lemma \ref{lemma_deviation_recur_tange_avera} gives the result. 
\end{proof}

\begin{lemma}[Convergence of the linearized iterates]
\label{convergence_lin_iterate_theorem}
Consider the linearized iterates $\{ \hat{x}_i \}_{i=0}^k$ satisfying \eqref{lin_iter_mfd} for some $G \succeq 0$ with $\|G \|_{x^*} \leq \sigma < 1$. Let $\hat{r}_i = \Delta_{\hat{x}_{i+1}} - \Delta_{\hat{x}_i}$, $\hat{c}^* = \argmin_{c^\top 1 = 1} \| \sum_{i=0}^k c_i \hat{r}_i \|^2_{x^*} + \lambda  \| c \|^2_2$. 
Then
\begin{equation*}
    \| \sum_{i = 0}^k \hat{c}^*_i \Delta_{\hat{x}_i} \|_{x^*} \leq \frac{d(x_0, x^*)}{1-\sigma} \sqrt{ (S^{[0,\sigma]}_{k,\bar{\lambda}})^2 - \frac{\lambda}{d^2(x_0, x^*)} \| \hat{c}^* \|_2^2  }
\end{equation*}
\end{lemma}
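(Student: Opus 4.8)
The plan is to reduce the regularized minimization over coefficient vectors to a minimization over polynomials in the operator $G$, and then to invoke the spectral decomposition of $G$ to compare the optimizer against the regularized Chebyshev polynomial. Everything takes place in the single tangent space $T_{x^*}\M$, so the argument runs essentially parallel to the Euclidean analysis of Scieur et al., with $\|\cdot\|_2$ on coefficients replaced by $\|\cdot\|_{x^*}$ on tangent vectors.

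First I would unfold the linear recursion. Since $\Delta_{\hat{x}_i} = G[\Delta_{\hat{x}_{i-1}}]$ and the iteration starts at $\hat{x}_0 = x_0$, iterating gives $\Delta_{\hat{x}_i} = G^i[\Delta_{x_0}]$, and hence $\hat{r}_i = \Delta_{\hat{x}_{i+1}} - \Delta_{\hat{x}_i} = G^i(G-\id)[\Delta_{x_0}]$. For any $c$ with $c^\top 1 = 1$, setting $p(t) = \sum_{i=0}^k c_i t^i$ yields a polynomial $p \in \gP_k^1$ (degree $k$, $p(1) = \sum_i c_i = 1$) with
\[
    \sum_{i=0}^k c_i \Delta_{\hat{x}_i} = p(G)[\Delta_{x_0}], \qquad \sum_{i=0}^k c_i \hat{r}_i = (G-\id)\,p(G)[\Delta_{x_0}],
\]
and $\|c\|_2 = \|p\|_2$ by definition of the coefficient norm. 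Thus $\hat{c}^*$ corresponds to the minimizer $p^*$ over $\gP_k^1$ of $F(p) \coloneqq \|(G-\id)p(G)[\Delta_{x_0}]\|^2_{x^*} + \lambda\|p\|_2^2$. Diagonalizing the self-adjoint PSD operator $G$, whose eigenvalues $\theta_j$ lie in $[0,\sigma]$ with orthonormal eigenbasis $\{u_j\}$, and writing $a_j = \langle \Delta_{x_0}, u_j\rangle_{x^*}$ with $\sum_j a_j^2 = \|\Delta_{x_0}\|^2_{x^*} = d^2(x_0,x^*)$, each quadratic form decouples as $\|p(G)[\Delta_{x_0}]\|^2_{x^*} = \sum_j p(\theta_j)^2 a_j^2$ and $\|(G-\id)p(G)[\Delta_{x_0}]\|^2_{x^*} = \sum_j (1-\theta_j)^2 p(\theta_j)^2 a_j^2$.

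The key estimate is the two-sided bound $(1-\sigma)^2 \leq (1-\theta_j)^2 \leq 1$ for $\theta_j \in [0,\sigma]$. The lower bound converts the controlled residual back into the quantity of interest:
\[
    \Big\| \sum_{i=0}^k \hat{c}^*_i \Delta_{\hat{x}_i} \Big\|^2_{x^*} = \|p^*(G)[\Delta_{x_0}]\|^2_{x^*} \leq \frac{1}{(1-\sigma)^2}\big(F(p^*) - \lambda\|\hat{c}^*\|_2^2\big).
\]
For the upper bound on $F(p^*)$, I use optimality $F(p^*) \leq F(q)$ together with $(1-\theta_j)^2 \leq 1$, giving for any $q \in \gP_k^1$
\[
    F(q) \leq \sum_j q(\theta_j)^2 a_j^2 + \lambda\|q\|_2^2 \leq d^2(x_0,x^*)\Big( \max_{t\in[0,\sigma]} q(t)^2 + \tfrac{\lambda}{d^2(x_0,x^*)}\|q\|_2^2 \Big).
\]
Choosing $q = C^{[0,\sigma]}_{k,\bar{\lambda}}$ with $\bar{\lambda} = \lambda/d^2(x_0,x^*)$ makes the parenthesized quantity equal to $(S^{[0,\sigma]}_{k,\bar{\lambda}})^2$ by definition, so $F(p^*) \leq d^2(x_0,x^*)(S^{[0,\sigma]}_{k,\bar{\lambda}})^2$. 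Substituting this into the previous display, taking square roots, and factoring $d(x_0,x^*)$ out of the radical produces exactly the claimed bound.

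The only genuinely delicate point is the bookkeeping of the two directions of $(1-\sigma)^2 \leq (1-\theta_j)^2 \leq 1$: the lower bound is what lets me pass from the residual-based objective (which the algorithm minimizes) back to $\|\sum_i \hat{c}^*_i \Delta_{\hat{x}_i}\|_{x^*}$, while the upper bound is what makes the worst-case comparison against the Chebyshev polynomial valid. The remaining steps — matching coefficient norms with polynomial norms and the spectral decoupling — are routine, and importantly incur no curvature correction here because $\{\hat{x}_i\}$ are defined directly through $G$ on $T_{x^*}\M$; all metric-distortion errors instead enter later, through Lemma~\ref{lemma_deviation_recur_tange_avera}, when this linear bound is combined to prove Lemma~\ref{error_lin_term}.
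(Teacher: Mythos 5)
Your proposal is correct and follows essentially the same route as the paper: unfold the recursion to write $\sum_i c_i \hat r_i = (G-\id)p(G)[\Delta_{x_0}]$ for $p\in\gP_k^1$, bound the optimal objective by the regularized Chebyshev value $(S^{[0,\sigma]}_{k,\bar\lambda})^2 d^2(x_0,x^*)$, and then invert $G-\id$ (paying the factor $1/(1-\sigma)$) while subtracting off $\lambda\|\hat c^*\|_2^2$. The only cosmetic difference is that you make the spectral decomposition of $G$ explicit where the paper uses the operator-norm bounds $\|G-\id\|_{x^*}\le 1$ and $\|(G-\id)^{-1}\|_{x^*}\le 1/(1-\sigma)$ directly; the two are equivalent for a self-adjoint PSD operator on the finite-dimensional tangent space.
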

\begin{proof}[Proof of Lemma \ref{convergence_lin_iterate_theorem}]
The proof follows from \cite[Proposition 3.4]{scieur2020regularized} and we include it here for completeness. Denote $\gP_k^1 \coloneqq \{ p \in \sR[x] : \deg(p) = k, p(1) = 1 \}$ as the set of polynomials of degree $k$ with coefficients summing to $1$. Noticing that $\hat{r}_i = \Delta_{\hat{x}_{i+1}} - \Delta_{\hat{x}_i} = \big( G - \id \big) [\Delta_{\hat{x}_i}] = \big( G - \id \big) G^i [\Delta_{x_0}]$, we have 
$\| \sum_{i=0}^k c_i \hat{r}_i \|^2_{x^*} = \| (G - \id) p(G) [\Delta_{x_0}]\|^2_{x^*}$ where $p \in \gP_k^1$ and $\{c_i \}_{i=0}^k$ are the corresponding coefficients.
Then we obtain
\begin{align*}
    \min_{p \in \gP_k^1} \Big\{ \| (G - \id) p(G) [\Delta_{x_0}] \|_{x^*}^2 + \lambda   \| c \|^2_2 \Big\} &\leq  d^2(x_0, x^*) \min_{p \in \gP_k^1} \Big\{ \| p(G) \|^2_{x^*} + \frac{\lambda}{d^2(x_0, x^*)} \| p \|^2_2 \Big\} \\
    &\leq d^2(x_0, x^*) \min_{p \in \gP_k^1} \max_{M: 0 \preceq M \preceq \sigma\id} \Big\{ \| p(M) \|^2_{x^*} + \frac{\lambda}{d^2(x_0, x^*)}  \| p\|^2_2\Big\} \\
    &= d^2(x_0, x^*) \min_{p \in \gP_k^1} \max_{x \in [0,\sigma]} \Big\{  p^2(x)  + \frac{\lambda}{d^2(x_0, x^*)} \| p\|^2_2\Big\} \\
    &= (S^{[0,\sigma]}_{k,\bar{\lambda}})^2 d^2(x_0, x^*),
\end{align*}
where $\bar{\lambda} = \lambda/d^2(x_0, x^*)$ and we use the fact that $\| G - \id \|_{x^*} \leq 1$. Then
\begin{align*}
    \| \sum_{i=0}^k \hat{c}^*_i \Delta_{\hat{x}_i}  \|^2_{x^*} &= \| \sum_{i=0}^k \hat{c}_i^* (G -\id)^{-1} \hat{r}_i \|^2_{x^*} \\
    &\leq \| (G - \id )^{-1}\|_{x^*}^2 \Big( \| \sum_{i=0}^k\hat{c}^*_i \hat{r}_i \|^2_{x^*} + \lambda \| \hat{c}^* \|^2_2 - \lambda  \| \hat{c}^* \|^2_2\Big) \\
    &\leq \frac{d^2(x_0, x^*)}{(1-\sigma)^2} \Big( (S^{[0,\sigma]}_{k,\bar{\lambda}})^2 - \frac{\lambda}{d^2(x_0, x^*)} \| \hat{c}^* \|_2^2  \Big),
\end{align*}
where we see that $\| (G - \id )^{-1}\|_{x^*} \leq \frac{1}{1-\sigma}$. 
\end{proof}

\subsection{Proof of Lemma \ref{coeff_bound}}

\begin{proof}[Proof of Lemma \ref{coeff_bound}]
From Proposition \ref{prop_cstar_derivation} and following \cite[Proposition 3.2]{scieur2020regularized}, we obtain
\begin{align*}
    \| c^* \| \leq \sqrt{\frac{\| R\|_2 + \lambda  }{(k+1) \lambda } }.
\end{align*}
Now we bound $\| R\|_2$. First we see $R$ can be rewritten as $\mathcal{R}^\top \mathcal{G}_{x_k} \mathcal{R}$, where $\mathcal{G}_{x_k} \in \sR^{r \times r}$ is the positive definite metric tensor at $x_k$ and $\mathcal{R} = [\vec{r}_i] \in \sR^{r \times k}$ is the collection of tangent vector in an orthonormal basis and $r$ is the intrinsic dimension of the manifold. Thus we can write Riemannian inner product as $\langle r_i, r_j \rangle_{x_k} = \vec{r}_i^\top \gG_{x_k} \vec{r}_j$ and
\begin{align*}
    \| R\|_2  = \| \gG_{x_k}^{1/2} \gR \|_2^2 \leq \| \gG_{x_k}^{1/2} R\|_{\rm F}^2 = \sum_{i=0}^k \vec{r}_i^\top \gG_{x_k} \vec{r}_i  = \sum_{i=0}^k \|r_i \|_{x_k}^2 = \sum_{i=0}^k d^2(x_i, x_{i+1}).
\end{align*}
On the other hand, denote the perturbation matrix $P = R - \hat{R}$. Then from Proposition \ref{prop_cstar_derivation} and following \cite[Proposition 3.2]{scieur2020regularized}, we have
\begin{align*}
    \| \delta^c \|_2 \leq \frac{\| P \|_2}{\lambda } \| \hat{c}^*\|_2. 
\end{align*}
Now we need to bound $\| P \|_2$. Let $\gE_i = \Delta_{x_i} - \Delta_{\hat{x}_i}$. Then we have 
\begin{align}
    \| \Gamma_{x_k}^{x^*} r_i - \hat{r}_i \|_{x^*} 
    &= \| \Gamma_{x_k}^{x^*} r_i - (\Delta_{x_{i+1}} - \Delta_{x_i}) + (\Delta_{x_{i+1}} - \Delta_{x_i}) - \hat{r}_i \|_{x^*} \nonumber\\
    &\leq \| \Gamma_{x_k}^{x^*} r_i - (\Delta_{x_{i+1}} - \Delta_{x_i}) \|_{x^*} + \| (\Delta_{x_{i+1}} - \Delta_{x_i}) - \hat{r}_i \|_{x^*} \nonumber\\
    &= \| \Gamma_{x_k}^{x^*} r_i - (\Delta_{x_{i+1}} - \Delta_{x_i}) \|_{x^*} + \| \gE_{i+1} - \gE_{i} \|_{x^*} \label{temp_lemma10_01}
\end{align}
where we use Lemma \ref{lemma_metric_distort}. Now we respectively bound each of the two terms on the right. First we see from Lemma \ref{diff_bound_gammatransport},
\begin{align}
    \| \Gamma_{x_k}^{x^*} r_i - (\Delta_{x_{i+1}} - \Delta_{x_i}) \|_{x^*} \leq C_0 d(x_i, x_k) d(x_k, x^*) d(x_i, x_{i+1}) + C_2 d(x_i, x^*) C_\kappa\big( d(x_i, x^*) + d(x_i, x_{i+1}) \big) \label{temp_lemma10_03}
\end{align}

Further, we bound $\| \gE_{i+1} - \gE_i \|_{x^*}$. From Lemma \ref{lin_iter_approx_lemma}, we have $\gE_i = G [\gE_{i-1}] + \varepsilon_i, \gE_0 = 0$ and
\begin{align}
    \| \gE_{i+1} - \gE_{i} \|_{x^*} = \| (G - \id) \gE_i + \varepsilon_{i+1} \|_{x^*} &=  \| (G - \id) \sum_{j=1}^{i} G^{i - j} \varepsilon_j + \varepsilon_{i+1} \|_{x^*} \leq \sum_{j=1}^{i+1} \| \varepsilon_j \|_{x^*}. \label{temp_lemma10_04}
\end{align}
Combining \eqref{temp_lemma10_04}, \eqref{temp_lemma10_03}, \eqref{temp_lemma10_01} leads to
\begin{equation*}
    \| \Gamma_{x_k}^{x^*} r_i - \hat{r}_i \|_{x^*} \leq C_0 d(x_i, x_k) d(x_k, x^*) d(x_i, x_{i+1}) + C_2 d(x_i, x^*) C_\kappa\big( d(x_i, x^*) + d(x_i, x_{i+1}) \big) + \sum_{j=1}^{i+1} \| \varepsilon_j \|_{x^*}.
\end{equation*}
Finally, recall we can write $R = \gR^\top \gG_{x_k} \gR$ and similarly for $\hat{R} = \hat{\gR}^\top \gG_{x^*} \hat{\gR}$ where $\hat{\gR} = [\vec{\hat{r}}_i]$. By isometry of parallel transport, we have $R = \gR_{x^*}^\top \gG_{x^*} \gR_{x^*}$ where $\gR_{x^*} = [\overrightarrow{\Gamma_{x_k}^{x^*} r}_i]$. Let $E = \gG_{x^*}^{1/2} (\gR_{x^*} - \hat{\gR})$. Then 
\begin{align*}
    \| P \|_2 = \|\gR_{x^*}^\top \gG_{x^*} \gR_{x^*} -  \hat{\gR}^\top \gG_{x^*} \hat{\gR}\|_2  \leq 2 \| E \|_2 \| \gG_{x^*}^{1/2} \hat{R}\|_2 + \| E \|_2^2.
\end{align*}
Notice that 
\begin{equation*}
    \| \gG_{x^*}^{1/2} \hat{R} \|_2 \leq \| \gG_{x^*}^{1/2} \hat{R} \|_{\rm F} \leq \sum_{i=0}^k \| \hat{r}_i \|_{x^*} \leq \sum_{i=0}^k \| (G- \id) G^i \hat{r}_0 \|_{x^*} \leq \sum_{i=0}^k \sigma^i \| \hat{r}_0 \|_{x^*} \leq \frac{1-\sigma^{k+1}}{1-\sigma} d(x_0, x^*),
\end{equation*}
Also 
\begin{align*}
    \| E \|_2 &= \| \gG_{x^*}^{1/2} (\gR_{x^*} - \hat{\gR}) \|_2 \leq \sum_{i=0}^k \| \Gamma_{x_k}^{x^*} r_i - \hat{r}_i \|_{x^*} \\
    &\leq d(x_k, x^*) C_0 \sum_{i=0}^k  d(x_i, x_k) d(x_i, x_{i+1}) + C_2 \sum_{i=0}^k d(x_i, x^*) C_\kappa \big( d(x_i, x^*) + d(x_i, x_{i+1}) \big) + \sum_{i=0}^k \sum_{j = 1}^{i+1} \| \varepsilon_j \|_{x^*} \\
    &= O(d^2(x_0, x^*)),
\end{align*}
where we notice that $C_\kappa(d(x_i, x^*) + d(x_i, x_{i+1})) = O(d^2(x_i, x^*))$ and recall that $\| \varepsilon_j \|_{x^*} = O(d^2(x_j, x^*) ) = O(d^2(x_0, x^*))$. Thus $\| P \|_2 \leq 2\psi \frac{1-\sigma^{k+1}}{1-\sigma} d(x_0, x^*) + (\psi)^2$ where $\psi = O(d^2(x_0, x^*))$.
\end{proof}

\subsection{Proof of Lemma \ref{error_coeff_lemma}}

\begin{proof}[Proof of Lemma \ref{error_coeff_lemma}]
From Lemma \ref{lemma_metric_distort}, we first observe that $d(\bar{x}_{\hat{c}^*, \hat{x}}, \bar{x}_{c^*, \hat{x}}) \leq C_1 \|  \Delta_{\bar{x}_{\hat{c}^*, \hat{x}}} - \Delta_{\bar{x}_{c^*, \hat{x}}} \|_{x^*}.$
Now we derive a bound on the term $\| \Delta_{\bar{x}_{\hat{c}^*, \hat{x}}} - \Delta_{\bar{x}_{c^*, \hat{x}}}\|_{x^*}$. Notice that from Lemma \ref{lemma_deviation_recur_tange_avera}, we have
\begin{align*}
   \| \Delta_{\bar{x}_{\hat{c}^*, \hat{x}}} - \Delta_{\bar{x}_{c^*, \hat{x}}}\|_{x^*} &= \| \sum_{i=0}^k (\hat{c}^*_i - c^*_i) \Delta_{\hat{x}_i} + \hat{\epsilon} \|_{x^*} \leq  \| \delta^c \|_2 \big( \sum_{i=0}^k \| \Delta_{\hat{x}_i} \|^2_{x^*} \big)^{1/2} + \| \hat{\epsilon} \|_{x^*} \\
   &\leq \| \delta^c \|_2 (\sum_{i=0}^k \| \Delta_{\hat{x}_i} \|_{x^*}) + \| \hat{\epsilon} \|_{x^*} \\
   &\leq \| \delta^c \|_2 (\sum_{i=0}^k \| G\|^i \| \Delta_{{x}_0} \|_{x^*}) + \| \hat{\epsilon} \|_{x^*} \\
   &\leq \frac{1-\sigma^{k+1}}{1-\sigma}  d(x_0, x^*)  \| \delta^c \|_2  + \| \hat{\epsilon} \|_{x^*} \\
   &\leq \frac{1}{1-\sigma} \frac{d(x_0, x^*)}{\lambda } \Big(  \frac{1}{1-\sigma} 2\psi d(x_0, x^*) + (\psi)^2 \Big) \| \hat{c}^*\|_2 + \| \hat{\epsilon} \|_{x^*}
\end{align*}
for some $\|\hat{\epsilon} \|_{x^*} = O(d^3(x_0, x^*))$ and we denote $\delta^c = c^* - \hat{c}^*$. The bound on $\| \delta^c\|_2$ is from Lemma \ref{coeff_bound}. 
\end{proof}

\subsection{Proof of Lemma \ref{error_nonlinear}}

\begin{proof}[Proof of Lemma \ref{error_nonlinear}]
Similarly to Lemma \ref{error_coeff_lemma}, we first see $d(\bar{x}_{c^*, \hat{x}}, \bar{x}_{c^*, x}) \leq C_1 \| \Delta_{\bar{x}_{c^*, \hat{x}}} - \Delta_{\bar{x}_{c^*, x}} \|_{x^*}$ due to Lemma \ref{lemma_metric_distort}. Again using Lemma \ref{lemma_deviation_recur_tange_avera}, we see 
\begin{align*}
    \| \Delta_{\bar{x}_{c^*, \hat{x}}} - \Delta_{\bar{x}_{c^*, x}} \|_{x^*} = \| \sum_{i=0}^k c^*_i (\Delta_{x_i} - \Delta_{\hat{x}_i}) + \hat{\epsilon}  \|_{x^*} &\leq \| c^*\|_2 (\sum_{i=0}^k \| \gE_i \|^2_{x^*})^{1/2}  + \| \hat{\epsilon} \|_{x^*} \\
    &\leq \| c^*\|_2 (\sum_{i=0}^k \| \gE_i \|_{x^*}) + \| \hat{\epsilon} \|_{x^*} \textbf{}
\end{align*}
where $\|\hat{\epsilon} \|_{x^*} = O(d^3(x_0, x^*))$ and $\gE_i = \Delta_{x_i} - \Delta_{\hat{x}_i}$. From Lemma \ref{lin_iter_approx_lemma}, we have $\gE_i = G [\gE_{i-1}] + \varepsilon_i, \gE_0 = 0$. Thus we can bound 
\begin{equation*}
    \| \gE_i \|_{x^*} = \| \sum_{j=1}^{i} G^{i-j} \varepsilon_j   \|_{x^*} \leq \sum_{j=1}^{i} \|  \varepsilon_j   \|_{x^*}.
\end{equation*}
Then using Lemma \ref{coeff_bound} to bound $\| c^*\|_2$, we obtain 
\begin{equation*}
    \| \Delta_{\bar{x}_{c^*, \hat{x}}} - \Delta_{\bar{x}_{c^*, x}} \|_{x^*} \leq \sqrt{\frac{ \sum_{i=0}^k d^2(x_i, x_{i+1}) + \lambda  }{(k+1) \lambda } } \Big( \sum_{i=0}^k \sum_{j=0}^i \| \varepsilon_j \|_{x^*} \Big) + \epsilon_3,
\end{equation*}
where $\epsilon_3 = O(d^3(x_0, x^*))$. 
\end{proof}

\subsection{Proof of Theorem \ref{main_convergence_rna1_theorem}}

\begin{proof}[Proof of Theorem \ref{main_convergence_rna1_theorem}]
Following the decomposition of error, we show 
\begin{align*}
    &d(\bar{x}_{c^*, x}, x^*) \\
    &\leq d(\bar{x}_{\hat{c}^*, \hat{x}}, x^*) + d(\bar{x}_{\hat{c}^*,\hat{x}}, \bar{x}_{c^*, \hat{x}}) + d(\bar{x}_{c^*, \hat{x}}, \bar{x}_{c^*, x}) \\
    &\leq \frac{d(x_0, x^*)}{1-\sigma} \sqrt{ (S^{[0,\sigma]}_{k,\bar{\lambda}})^2 - \frac{\lambda}{d^2(x_0, x^*)} \| \hat{c}^* \|_2^2} +  \frac{C_1 d(x_0, x^*)}{\lambda (1-\sigma)} \Big(  \frac{2 d(x_0, x^*)}{1-\sigma}  \psi + (\psi)^2 \Big) \| \hat{c}^*\|_2 \\
    &\qquad + C_1\sqrt{\frac{ \sum_{i=0}^k d^2(x_i, x_{i+1}) + \lambda  }{(k+1) \lambda } } \Big( \sum_{i=0}^k \sum_{j=0}^i \| \varepsilon_j \|_{x^*} \Big)  + \epsilon_1 + \epsilon_2 +  \epsilon_3.
\end{align*}
Now we maximize the bound over $\| \hat{c}^*\|$. From \cite[Proposition A.1]{scieur2020regularized}, we see the maximum of a function $g(x) = c \sqrt{a - \bar{\lambda} x^2} + b x$ is $\sqrt{a} \sqrt{c^2 + \frac{b^2}{\bar{\lambda}}}$ where $\bar{\lambda} = \lambda/d^2(x_0, x^*)$. Let $a = (S^{[0,\sigma]}_{k,\bar{\lambda}})^2$, $b = \frac{C_1 d(x_0, x^*)}{\lambda (1-\sigma)} \Big(  \frac{2 d(x_0, x^*)}{1-\sigma}  \psi + (\psi)^2 \Big)$, $c = \frac{d(x_0, x^*)}{1-\sigma}$. We then obtain 
\begin{align*}
    d(\bar{x}_{c^*, x}, x^*) &\leq S^{[0,\sigma]}_{k,\bar{\lambda}} \sqrt{\frac{d^2(x_0, x^*)}{(1-\sigma)^2} + \frac{C_1^2 d^4(x_0, x^*) \Big(  \frac{2 d(x_0, x^*)}{1-\sigma}  \psi + (\psi)^2 \Big)^2}{\lambda^3 (1-\sigma)^2} } \\
    &\quad + C_1\sqrt{\frac{ \sum_{i=0}^k d^2(x_i, x_{i+1}) + \lambda  }{(k+1) \lambda } } \Big( \sum_{i=0}^k \sum_{j=0}^i \| \varepsilon_j \|_{x^*} \Big)  + \epsilon_1 +  \epsilon_2 +  \epsilon_3,
\end{align*}
which completes the proof. 
\end{proof}

\subsection{Proof of Proposition \ref{asymp_optimal_convergence_prop}}

\begin{proof}[Proof of Proposition \ref{asymp_optimal_convergence_prop}]
Dividing the bound from Theorem \ref{main_convergence_rna1_theorem} by $d(x_0, x^*)$ gives 
\begin{align*}
    \frac{d(\bar{x}_{c^*, x}, x^* )}{d(x_0, x^*)} &\leq  \frac{S^{[0,\sigma]}_{k,\bar{\lambda}}}{1-\sigma} \sqrt{1 + { O(d^{(2-3s)}(x_0, x^*) \Big(  \frac{2 d(x_0, x^*)}{1-\sigma}  \psi + (\psi)^2 \Big)^2} } \\
    &\quad + C_1\sqrt{\frac{ \sum_{i=0}^k d^2(x_i, x_{i+1}) }{(k+1) O(d^s(x_0, x^*)) } + \frac{1}{k+1} } \Big( \sum_{i=0}^k \sum_{j=0}^i \| \varepsilon_j \|_{x^*} \Big)  + \frac{1}{d(x_0,x^*)} \big(\epsilon_1 + \epsilon_2 + \epsilon_3 \big).
\end{align*}
By $\psi = O(d^2(x_0, x^*))$, the first term of the bound simplifies to
$\frac{S^{[0,\sigma]}_{k, \bar{\lambda}}}{1-\sigma} \sqrt{1 + O(d^{(8-3s)}(x_0, x^*) )},$
and similarly because $d(x_i, x_{i+1}) = O(d(x_0, x^*))$, $\| \varepsilon_j \|_{x^*} = O(d^2(x_0, x^*))$ under Assumption \ref{assump_normal_nei}, the second term simplifies to $O(\sqrt{d^2(x_0, x^*) + d^{(4-s)}(x_0, x^*)})$ and the last term reduces to $O(d^2( x_0, x^* ))$ as $\epsilon_1, \epsilon_2, \epsilon_3 = O(d^3(x_0, x^*))$. Hence we obtain 
\begin{equation*}
    \frac{d(\bar{x}_{c^*, x}, x^* )}{d(x_0, x^*)} \leq \frac{S^{[0,\sigma]}_{k, \bar{\lambda}}}{1-\sigma} \sqrt{1 + O(d^{(8-3s)}(x_0, x^*) )} + O(\sqrt{d^2(x_0, x^*) + d^{(4-s)}(x_0, x^*)}) + O(d^2( x_0, x^* )).
\end{equation*}
Finally we notice that the last two terms vanishes when $d(x_0, x^*) \xrightarrow{} 0$ for the choice of $s$. For the first term, given that when $d(x_0, x^*) \xrightarrow{} 0$, $\bar{\lambda} = O(d^{(s-2)}(x_0, x^*)) \xrightarrow{} 0$ and $O(d^{(8-3s)}(x_0, x^*)) \xrightarrow{} 0$ for $s \in (2, \frac{8}{3})$, then
\begin{equation*}
    \lim_{d(x_0, x^*) \xrightarrow{} 0} \frac{S^{[0,\sigma]}_{k, \bar{\lambda}}}{1-\sigma} \sqrt{1 + O(d^{(2-3s)}(x_0, x^*) )} = \frac{S_{k, 0}^{[0, \sigma]}}{1-\sigma} = \frac{1}{1-\sigma} \frac{2 }{\beta^{-k} + \beta^k}
\end{equation*}
where $\beta = \frac{1 - \sqrt{1-\sigma}}{1 + \sqrt{1-\sigma}}$. This follows because without regularization, $S^{[0,\sigma]}_{k,0}$ reduces to the rescaled and shifted Chebyshev polynomial. See for example \cite{d2021acceleration}.
\end{proof}

\subsection{Proof of Lemma \ref{lemma_alternative_aver_deviation}}

\begin{proof}[Proof of Lemma \ref{lemma_alternative_aver_deviation}]
First, we write 
\begin{equation*}
    \sum_{i=0}^k c_i \Delta_{x_i} = \Delta_{x_k} - \sum_{i=0}^{k-1} \theta_i (\Delta_{x_{i+1}} - \Delta_{x_i}).
\end{equation*}
By Lemma \ref{lemma_move_vectorsum}, we obtain
\begin{align*}
    &d \Big(\Exp_{x^*} \big(\sum_{i=0}^k c_i \Delta_{x_i} \big), \Exp_{x_k} \big( - \Gamma_{x^*}^{x_k} \sum_{i=0}^{k-1} \theta_i (\Delta_{x_{i+1}} - \Delta_{x_i}) \big)  \Big) \\
    &\qquad \qquad \leq d(x_k, x^*) C_\kappa \Big(  d(x_k, x^*) + \| \sum_{i=0}^{k-1} \theta_i (\Delta_{x_{i+1}} - \Delta_{x_i} )  \|_{x^*} \Big) \\
    &\qquad \qquad \leq d(x_k, x^*) C_\kappa \Big( d(x_k, x^*) + \sum_{i=0}^{k-1} \theta_i (d(x_{i+1}, x^*) + d(x_i, x^*) )\Big),
\end{align*}
where we use the fact that $C_\kappa(x)$ is increasing for $x > 0$. 
In addition, from Lemma \ref{lemma_metric_distort}, 
\begin{align*}
    d \Big( \bar{x}_{c, x}, \Exp_{x_k} \big( - \Gamma_{x^*}^{x_k} \sum_{i=0}^{k-1} \theta_i (\Delta_{x_{i+1}} - \Delta_{x_i}) \big) \Big) &\leq C_1 \| \sum_{i=0}^{k-1} \theta_i \big( \Gamma_{x^*}^{x_k} (\Delta_{x_{i+1}} - \Delta_{x_i}) - \Gamma_{x_i}^{x_k} \Exp_{x_i}^{-1}(x_{i+1}) \big) \|_{x_k} \\
    &\leq C_1 \sum_{i=0}^{k-1} \theta_i \| \Delta_{x_{i+1}} - \Delta_{x_i} - \Gamma_{x_k}^{x^*} \Gamma_{x_i}^{x_k} \Exp_{x_i}^{-1}(x_{i+1}) \|_{x^*}.
\end{align*}
Using Lemma \ref{diff_bound_gammatransport}, we obtain 
\begin{align*}
    \| \Delta_{x_{i+1}} - \Delta_{x_i} - \Gamma_{x_k}^{x^*} \Gamma_{x_i}^{x_k} \Exp_{x_i}^{-1}(x_{i+1}) \|_{x^*} &\leq C_0 d(x_i, x_k) d(x_k, x^*) d(x_i, x_{i+1}) \\
    &\qquad + C_2 d(x_i, x^*) C_\kappa \big( d(x_i, x^*) + d(x_i, x_{i+1}) \big).
\end{align*}
Let $e = \Delta_{\bar{x}_{c,x}} - \sum_{i=0}^k c_i \Delta_{x_i}$. Now combining the above results gives
\begin{align*}
    \| e\|_{x^*} &= \| \Delta_{\bar{x}_{c,x}} - \sum_{i=0}^k c_i \Delta_{x_i}\|_{x^*} \\
    &\leq C_2 d \Big( \bar{x}_{c,x}, \Exp_{x^*} \big( \sum_{i=0}^k c_i \Delta_{x_i} \big) \Big) \\
    &\leq C_2  d \Big( \bar{x}_{c, x}, \Exp_{x_k} \big( - \Gamma_{x^*}^{x_k} \sum_{i=0}^{k-1} \theta_i (\Delta_{x_{i+1}} - \Delta_{x_i}) \big) \Big) \\
    &\qquad + C_2 d \Big( \Exp_{x^*} \big(\sum_{i=0}^k c_i \Delta_{x_i} \big), \Exp_{x_k} \big( - \Gamma_{x^*}^{x_k} \sum_{i=0}^{k-1} \theta_i (\Delta_{x_{i+1}} - \Delta_{x_i}) \big)  \Big) \\
    &\leq C_2 C_1 \sum_{i=0}^{k-1} \theta_i \Big( C_0 d(x_i, x_k) d(x_k, x^*) d(x_i, x_{i+1}) + C_2 d(x_i, x^*) C_\kappa \big( d(x_i, x^*) + d(x_i, x_{i+1})  \Big) \\
    &\qquad + C_2 d(x_k, x^*) C_\kappa \Big( d(x_k, x^*) + \sum_{i=0}^{k-1} \theta_i (d(x_{i+1}, x^*) + d(x_i, x^*) )\Big).
\end{align*}
Under Assumption \ref{assump_normal_nei} and $C_\kappa(x) = O(x^2)$, we see $\| e\|_{x^*} = O(d^3(x_0, x^*))$.
\end{proof}

\subsection{Proof of Lemma \ref{lemma_frechetmean_bound}}

\begin{proof}[Proof of Lemma \ref{lemma_frechetmean_bound}]
Let $D(x) \coloneqq \frac{1}{2} \sum_{i=0}^k c_i d^2(x,x_i)$. Then we can show $\grad D(x) = - \sum_{i=0}^k c_i \Exp^{-1}_{x} (x_i)$. See for example \cite{alimisis2020continuous}. By the first-order stationarity,
\begin{equation*}
    \grad D(\bar{x}_{c,x}) = - \sum_{i=0}^k c_i \Exp_{\bar{x}_{c,x}}^{-1}(x_i) = 0
\end{equation*}
and $\grad D(x^*) = - \sum_{i=0}^k c_i \Exp^{-1}_{x^*} (x_i)$. 

The first claim that $d(\bar{x}_{c, x} , x^*) \leq \| \sum_{i=0}^k c_i \Delta_{x_i} \|_{x^*}$ follows from Lemma \cite[Lemma 10]{tripuraneni2018averaging} and we include here for completeness. Define a real-valued function $g(t) \coloneqq D\big( \Exp_{x^*}(t \eta) \big)$ with $\eta = \frac{\Delta_{\bar{x}_{c,x}}}{\| \Delta_{\bar{x}_{c,x}} \|_{x^*}}$. Under the assumption and definition of geodesic $\mu$-strongly convex, we see $g(t)$ is $\mu$-strongly convex in $t$. Thus, we have $g'(t_0) - g'(0) \geq \mu t_0$ for any $t_0$. Let $t_0 = \| \Delta_{\bar{x}_{c,x}} \|_{x^*}$ and denote the geodesic $\gamma(t) \coloneqq \Exp_{x^*}(t\eta)$. We derive that $g'(t) = \langle \grad D(\Exp_{x^*}(t\eta)), \gamma'(t) \rangle$ by chain rule. Then we have
$g'(t_0) = \langle \grad D(\bar{x}_{c,x}), \gamma'(t_0) \rangle_{\bar{x}_{c,x}} = 0$ and $g'(0) = \langle  \grad D(x^*), \eta \rangle$. Finally, we see
\begin{equation*}
    \| \grad D(x^*) \|^2_{x^*} \geq (g'(0))^2 = (g'(t_0) - g'(0))^2 \geq \mu^2 t_0^2 = \mu^2\| \Delta_{\bar{x}_{c,x}} \|_{x^*}^2,
\end{equation*}
where the first inequality is due to Cauchy–Schwarz inequality. The first claim is proved by noticing $\| \grad D(x^*) \|_{x^*} = \| \sum_{i=0}^k c_i \Delta_{x_i} \|_{x^*}$ and $\| \Delta_{\bar{x}_{c,x}} \|_{x^*} = d(\bar{x}_{c,x}, x^*)$.

For the second claim, we first observe from the proof of Lemma \ref{diff_bound_gammatransport} that 
\begin{align*}
    \| \Exp_{\bar{x}_{c,x}}^{-1}(x_i) - \Gamma_{x^*}^{\bar{x}_{c,x}} \big( \Exp_{x^*}^{-1}(x_i) - \Exp_{x^*}^{-1}(\bar{x}_{c,x})  \big)  \|_{\bar{x}_{c,x}} &\leq C_2 d(\bar{x}_{c,x}, x^*) C_\kappa\big( d(\bar{x}_{c,x}, x^*) + d(\bar{x}_{c,x}, x_i) \big) \\
    &= O(d^3(x_0, x^*)),
\end{align*}
where the order can be seen due to that $d(\bar{x}_{c,x}, x^*) \leq \frac{1}{\mu} \sum_{i=0}^k c_i d(x_i, x^*) = O(x_0, x^*)$ from the first claim. Thus let $\bar{\varepsilon} \coloneqq \Exp_{\bar{x}_{c,x}}^{-1}(x_i) - \Gamma_{x^*}^{\bar{x}_{c,x}} \big( \Exp_{x^*}^{-1}(x_i) - \Exp_{x^*}^{-1}(\bar{x}_{c,x})  \big) $, we have $\| \bar{\varepsilon} \|_{\bar{x}_{c,x}} = O(d^3(x_0, x^*))$. From the first order stationarity, we see
\begin{align*}
    0 = \sum_{i=0}^k c_i \Exp_{\bar{x}_{c,x}}^{-1}(x_i) &= \sum_{i=0}^k c_i \Big( \Gamma_{x^*}^{\bar{x}_{c,x}} \big( \Exp_{x^*}^{-1}(x_i) - \Exp_{x^*}^{-1}(\bar{x}_{c,x})  \big) + \bar{\varepsilon} \Big) \\
    &= \Gamma_{x^*}^{\bar{x}_{c,x}} \Big( \sum_{i=0}^k c_i \Delta_{x_i} - \Delta_{\bar{x}_{c,x}}  \Big) + \bar{\varepsilon}.
\end{align*}
Taking the norm and using the isometry of parallel transport, we obtain the desired result.
\end{proof}

\section{Proofs under general retraction and vector transport}

Here we show that when we use general retraction $\Retr$ in place of the exponential map $\Exp$, thus avoiding the lemma on metric distortion (Lemma \ref{lemma_move_vectorsum}, \ref{lemma_metric_distort}), we can still show a similar result as Lemma \ref{lemma_deviation_recur_tange_avera} but with an error on the order of $O(d^2(x_0, x^*))$ instead of $O(d^3(x_0, x^*))$ as for the case of exponential map. The main idea of proof follows from \cite{tripuraneni2018averaging}. The next proposition formalizes such claim. For this section, we denote $\Delta_{x} = \Retr_{x^*}^{-1}(x)$ for any $x \in \gX$ where the retraction has a smooth inverse. For general retraction, the deviation is on the order of $O(\| \Delta_{x_0} \|^2_{x^*}) = O(d^2(x_0, x^*))$ where we use the fact that retraction approximates the exponential map to the first order.

\begin{proposition}
\label{general_bounded_deviation_weighted_aver}
Suppose all iterates $x_i \in \gX$, a neighbourhood where retraction has a smooth inverse. Consider the weighted average $\bar{x}_{c, x} = \tilde{x}_k$ given by \eqref{mfd_recursive_average} with retraction.  Assume the sequence of iterates is non-divergent in retraction, i.e. $\| \Delta_{x_i}\|_{x^*}, \| \Delta_{\tilde{x}_i}\|_{x^*} = O(\|\Delta_{x_0} \|_{x^*})$. Then we have $\Delta_{\bar{x}_{c, x}} = \sum_{i=0}^k c_i \Delta_{x_i} + e$, with $\| e\|_{x^*} = O(\| \Delta_{x_0} \|_{x^*}^2)$,
\end{proposition}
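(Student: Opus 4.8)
The plan is to mirror the structure of the proof of Lemma \ref{lemma_deviation_recur_tange_avera}, but to replace the curvature-based metric-distortion estimates (Lemmas \ref{lemma_move_vectorsum} and \ref{lemma_metric_distort}) with a direct second-order Taylor expansion of the retraction-averaging map, following \cite[Lemma 12]{tripuraneni2018averaging}. All quantities are pushed into the single tangent space $T_{x^*}\M$ through the chart $\Delta_x = \Retr_{x^*}^{-1}(x)$, which is a diffeomorphism near $x^*$ by Assumption \ref{basic_retr_neighb_assump}. Writing $\gamma_i = c_i/\sum_{j=0}^i c_j$, I would introduce the exact Euclidean streaming average $\widetilde{\Delta}_{x_i} = (1-\gamma_i)\widetilde{\Delta}_{x_{i-1}} + \gamma_i \Delta_{x_i}$ with $\widetilde{\Delta}_{x_{-1}} = \Delta_{x_0}$; by Lemma \ref{waverage_recursion_lemma} this satisfies $\widetilde{\Delta}_{x_k} = \sum_{i=0}^k c_i \Delta_{x_i}$. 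It then suffices to prove by induction that $\|\Delta_{\tilde{x}_i} - \widetilde{\Delta}_{x_i}\|_{x^*} = O(\|\Delta_{x_0}\|^2_{x^*})$.

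The crux is the single-step estimate. I would define the smooth map $G_\gamma : U \times U \to T_{x^*}\M$ on a neighbourhood $U$ of $0$ by $G_\gamma(a,b) = \Retr_{x^*}^{-1}\big(\Retr_p(\gamma \Retr_p^{-1}(\Retr_{x^*}(b)))\big)$ with $p = \Retr_{x^*}(a)$; this is precisely the retraction-coordinate form of one averaging step $\tilde{x}_i = \Retr_{\tilde{x}_{i-1}}(\gamma_i \Retr_{\tilde{x}_{i-1}}^{-1}(x_i))$, so that $\Delta_{\tilde{x}_i} = G_{\gamma_i}(\Delta_{\tilde{x}_{i-1}}, \Delta_{x_i})$. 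Its first-order expansion is pinned down by two elementary specializations. Setting $a = 0$ gives $p = x^*$ and hence $G_\gamma(0,b) = \gamma b$ identically, so $G_\gamma(0,0) = 0$ and $D_b G_\gamma(0,0) = \gamma\,\id$. Along the diagonal $a = b$ one has $\Retr_p^{-1}(q) = 0$ with $q = p$, whence $G_\gamma(a,a) = a$ identically; differentiating gives $D_a G_\gamma(0,0) + D_b G_\gamma(0,0) = \id$, so $D_a G_\gamma(0,0) = (1-\gamma)\,\id$. Taylor's theorem, with the second derivatives of $G_\gamma$ bounded on the compact set $U$, then yields $G_\gamma(a,b) = (1-\gamma)a + \gamma b + O(\max\{\|a\|_{x^*}, \|b\|_{x^*}\}^2)$.

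The induction is then routine. The base case $i=0$ is exact, since $\gamma_0 = 1$ forces $\tilde{x}_0 = x_0$, so $\Delta_{\tilde{x}_0} = \Delta_{x_0} = \widetilde{\Delta}_{x_0}$. For the inductive step I apply the single-step estimate with $a = \Delta_{\tilde{x}_{i-1}}$ and $b = \Delta_{x_i}$; the non-divergence hypothesis $\|\Delta_{\tilde{x}_{i-1}}\|_{x^*}, \|\Delta_{x_i}\|_{x^*} = O(\|\Delta_{x_0}\|_{x^*})$ makes the remainder $O(\|\Delta_{x_0}\|^2_{x^*})$, giving $\Delta_{\tilde{x}_i} = (1-\gamma_i)\Delta_{\tilde{x}_{i-1}} + \gamma_i\Delta_{x_i} + O(\|\Delta_{x_0}\|^2_{x^*})$. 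Subtracting the exact recursion for $\widetilde{\Delta}_{x_i}$ and using the inductive bound on $e_{i-1} = \Delta_{\tilde{x}_{i-1}} - \widetilde{\Delta}_{x_{i-1}}$ propagates the error as $e_i = (1-\gamma_i)e_{i-1} + O(\|\Delta_{x_0}\|^2_{x^*})$; since $k$ and the coefficients are fixed, $\|e_k\|_{x^*} = O(\|\Delta_{x_0}\|^2_{x^*})$, and invoking Assumption \ref{retr_exp_bound}(1) to convert $\|\Delta_{x_0}\|_{x^*}$ into $d(x_0,x^*)$ finishes the proof. I expect the main obstacle to be the bookkeeping that makes the single-step remainder \emph{uniform}: one must verify that $G_\gamma$ is genuinely $C^2$ on a fixed compact neighbourhood independent of the step index (this is exactly where total retractivity and the smooth inverse of Assumption \ref{basic_retr_neighb_assump} are needed), and check that the implicit constants stay bounded as $\gamma$ ranges over the fixed values $\gamma_0, \dots, \gamma_k$.
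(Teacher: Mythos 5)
Your proof is correct and follows essentially the same route as the paper's: both push everything to $T_{x^*}\M$ via $\Retr_{x^*}^{-1}$, Taylor-expand the single averaging step to identify its linearization as $(1-\gamma_i)\Delta_{\tilde{x}_{i-1}}+\gamma_i\Delta_{x_i}$ with a quadratic remainder, and then accumulate the $k$ remainders using the non-divergence assumption. Your way of pinning down the first-order jet --- from the identities $G_\gamma(0,b)=\gamma b$ and $G_\gamma(a,a)=a$ --- is a clean and equivalent substitute for the paper's chain-rule/inverse-function-theorem computation of $\D F$ at the fixed point $\Delta_{\tilde{x}_i}$.
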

\begin{proof}
The proof generalize the proof of \cite[Lemma 12]{tripuraneni2018averaging}.
First denote $\Retr_x^y \coloneqq \Retr_y^{-1} \circ \Retr_x$ and we notice that 
\begin{align*}
    {\Delta}_{\tilde{x}_{i+1}} = \Retr_{x^*}^{-1}(\tilde{x}_{i+1}) = \Retr^{-1}_{x^*} \Big( \Retr_{\tilde{x}_{i}} \big(  \gamma_{i+1} \Retr^{-1}_{\tilde{x}_{i}} \big( x_{i+1}  \big) \big) \Big) &= \Retr_{\tilde{x}_i}^{x^*} \Big( \gamma_{i+1} \Retr^{-1}_{\tilde{x}_i} \big( \Retr_{x^*} (\Delta_{x_{i+1}})  \big) \Big) \\
    &= \Retr_{\tilde{x}_i}^{x^*} \Big(  \gamma_{i+1} \big( \Retr_{\tilde{x}_i}^{x^*} \big)^{-1} (\Delta_{x_{i+1}}) \Big) \\
    &= F(\Delta_{x_{i+1}}),
\end{align*}
where we denote $\gamma_i = \frac{c_i}{\sum_{j=0}^i c_j}$ and $F: T_{x^*}\M \xrightarrow{} T_{x^*}\M$ defined as $F(u) = \Retr_{\tilde{x}_i}^{x^*} \Big(  \gamma_{i+1} \big( \Retr_{\tilde{x}_i}^{x^*} \big)^{-1} (u) \Big) $. In addition, it can be verified that $F(\Delta_{\tilde{x}_i}) = \Delta_{\tilde{x}_i}$. 

Now by chain rule, we have 
\begin{align*}
    \D F( u ) &= \D \Retr_{\tilde{x}_i}^{x^*} \Big( \gamma_{i+1} (\Retr_{\tilde{x}_i}^{x^*})^{-1}( u ) \Big) \Big[  \D \gamma_{i+1} (\Retr_{\tilde{x}_i}^{x^*})^{-1} (u)  \Big] \\
    &=  \gamma_{i+1} \D \Big( \frac{1}{\gamma_{i+1}} \Retr_{\tilde{x}_i}^{x^*} \Big)\Big( \gamma_{i+1} (\Retr^{x^*}_{\tilde{x}_i} )^{-1} (u) \Big) \Big[  \D \gamma_{i+1} (\Retr_{\tilde{x}_i}^{x^*})^{-1} (u) \Big] \\
    &= \gamma_{i+1} \Big( \D \gamma_{i+1}(\Retr^{x^*}_{\tilde{x}_i} )^{-1} (u) \Big)^{-1} \Big[ \D \gamma_{i+1}  (\Retr_{\tilde{x}_i}^{x^*})^{-1} (u) \Big] = \gamma_{i+1} \id,
\end{align*}
where the third inequality uses the inverse function theorem. Hence the Taylor expansion of $F$ at $\Delta_{\tilde{x}_i}$ up to second order gives 
\begin{align*}
    \Delta_{\tilde{x}_{i+1}} = F(\Delta_{x_{i+1}}) &= F(\Delta_{\tilde{x}_i}) + \gamma_{i+1} (\Delta_{x_{i+1}}  - \Delta_{\tilde{x}_i}) + \tilde{\epsilon}_i \\
    &= (1- \gamma_{i+1}) \Delta_{\tilde{x}_i} + \gamma_{i+1} \Delta_{x_{i+1}} + \tilde{\epsilon}_i.
\end{align*}
where we let $\tilde{\epsilon}_i = O(\| \Delta_{x_{i+1}} - \Delta_{\tilde{x}_i} \|^2_{x^*})$. From the expansion, it follows that $\Delta_{\tilde{x}_{i+1}} = \frac{\sum_{j=0}^i c_i}{\sum_{j=0}^{i+1} c_j} \Delta_{\tilde{x}_i} + \frac{c_{i+1}}{\sum_{j=0}^{i+1} c_j} \Delta_{x_{i+1}} + \tilde{\epsilon}_i$, which yields
\begin{equation*}
    (\sum_{j=0}^{i+1} c_j) \Delta_{\tilde{x}_{i+1}} = (\sum_{j=0}^i c_j ) \Delta_{\tilde{x}_i} + c_{i+1} \Delta_{x_{i+1}} + (\sum_{j=0}^i c_j ) \tilde{\epsilon}_i = \sum_{j=0}^{i+1} c_j \Delta_{x_j} + \sum_{j = 0}^i (\sum_{\ell = 0}^j c_\ell) \tilde{\epsilon}_j,
\end{equation*}
where the second equality follows by expanding the first equality. Let $i = k-1$, this leads to 
\begin{equation*}
    \Delta_{\bar{x}_{c,x}} = \Delta_{\tilde{x}_{k}} = \sum_{j=0}^k c_j \Delta_{x_j} + e,
\end{equation*}
where we let $e = \sum_{j=0}^{k-1} (\sum_{\ell = 0}^j c_\ell) \tilde{\epsilon}_j = O \big(\sum_{j=0}^{k-1} (\sum_{\ell = 0}^j c_\ell) ( \| \Delta_{x_{j+1}} \|^2_{x^*} + \| \Delta_{\tilde{x}_j} \|^2_{x^*} ) \big)$. We observe that $\| \Delta_{x_{i+1}}\|^2_{x^*} = O(\| \Delta_{x_{0}} \|^2_{x^*})$ and $\| \Delta_{\tilde{x}_j} \|_{x^*}^2 = O(\| \Delta_{x_{0}} \|_{x^*}^2)$ due to the non-divergent assumption. The proof is complete. 
\end{proof}

\subsection{Proof of Theorem \ref{theorem_retr_convergence}}

\begin{proof}[Proof of Theorem \ref{theorem_retr_convergence}]
Here we only provide a sketch of proof because the main idea is exactly the same as the case of exponential map.

Under general retraction and vector transport, an analogue of Lemma \ref{lin_iter_approx_lemma} holds. That is, 
\begin{equation}
    \Retr_{x^*}^{-1}(x_i) = (\id - \eta \hess f(x^*)) [\Retr_{x^*}^{-1}(x_{i-1})] + \varepsilon_i, \label{retr_lemma3}
\end{equation}
where $\| \varepsilon_i \|_{x^*} = O(d^2(x_i, x^*))$. To show \eqref{retr_lemma3}, we follow the exact same steps as the proof for Lemma \ref{lin_iter_approx_lemma} where we replace exponential map with retraction. The only difference is that the second order derivative is no longer the Riemann curvature tensor. In addition, we have shown in Proposition \ref{general_bounded_deviation_weighted_aver} that for retraction, we also have 
\begin{equation}
    \Retr_{x^*}^{-1}({\bar{x}_{c,x}}) = \sum_{i=0}^k c_i \Retr_{x^*}^{-1}(x_i) + e \label{retr_lemma4}
\end{equation}
with $\| e \|_{x^*} = O(d^2(x_0, x^*))$. 

Further, we still consider the same error bound decomposition, i.e.,
\begin{equation*}
    d(\bar{x}_{c^*, x}, x^*) \leq  {d(\bar{x}_{\hat{c}^*, \hat{x}}, x^*)} + {d(\bar{x}_{\hat{c}^*,\hat{x}}, \bar{x}_{c^*, \hat{x}})} + {d(\bar{x}_{c^*, \hat{x}}, \bar{x}_{c^*, x})}. 
\end{equation*}

(I). For the linear term $d(\bar{x}_{\hat{c}^*, \hat{x}}, x^*)$, we first see the linearized iterates $\hat{x}_i$ enjoys the same convergence as in Lemma \ref{convergence_lin_iterate_theorem} that 
\begin{align}
    \| \sum_{i=0}^k \hat{c}_i^* \Retr_{x^*}^{-1}(\hat{x}_i) \|_{x^*} &\leq \frac{\|\Retr_{x^*}^{-1}(x_0) \|_{x^*}}{1-\sigma} \sqrt{(S^{[0,\sigma]}_{k, \bar{\lambda}})^2 - \frac{\lambda}{\|\Retr_{x^*}^{-1}(x_0) \|_{x^*}^2} \| \hat{c}^* \|^2_2}, \label{retr_linear_convg}
\end{align}
where $\bar{\lambda} \coloneqq \lambda/\|\Retr_{x^*}^{-1}(x_0) \|_{x^*}^2$ and we use Assumption \ref{retr_exp_bound}. Combining \eqref{retr_linear_convg} with \eqref{retr_lemma4} yields
\begin{align*}
    d(\bar{x}_{\hat{c}^*, \hat{x}}, x^*) \leq \frac{1}{a_0} \| \Retr_{x^*}^{-1}(\bar{x}_{\hat{c}^*, \hat{x}}) \|_{x^*} &\leq \| \sum_{i=0}^k \hat{c}_i^* \Retr_{x^*}^{-1}(\hat{x}_i) \|_{x^*} + \epsilon_1, \\
    &\leq \frac{\|\Retr_{x^*}^{-1}(x_0) \|_{x^*}}{a_0(1-\sigma)} \sqrt{(S^{[0,\sigma]}_{k, \bar{\lambda}})^2 - \frac{\lambda}{\|\Retr_{x^*}^{-1}(x_0) \|_{x^*}^2} \| \hat{c}^* \|^2_2 } + \epsilon_1,
\end{align*}
with $\epsilon_1 = O(d^2(x_0, x^*))$. 

(II). For the stability term $d(\bar{x}_{\hat{c}^*,\hat{x}}, \bar{x}_{c^*, \hat{x}})$, we first use Assumption \ref{retr_exp_bound} to show 
\begin{align*}
    \| \Delta_{\bar{x}_{\hat{c}^*,\hat{x}}} - \Delta_{\bar{x}_{c^*, \hat{x}}} - \big(\Retr_{x^*}^{-1}(\bar{x}_{\hat{c}^*,\hat{x}}) -  \Retr_{x^*}^{-1}({\bar{x}_{c^*, \hat{x}}}) \big) \|_{x^*} &\leq a_2 \| \Retr_{x^*}^{-1}(\bar{x}_{\hat{c}^*,\hat{x}}) \|^2_{x^*} + a_2 \| \Retr_{x^*}^{-1}({\bar{x}_{c^*, \hat{x}}}) \big)  \|_{x^*}^2  \\
    &\leq a_2a_1^2 \big( d^2(  \bar{x}_{\hat{c}^*,\hat{x}},x^* ) + d^2(\bar{x}_{c^*, \hat{x}}, x^*) \big) \\
    &= O(d^2(x_0, x^*)).
\end{align*}
Let $\epsilon_r \coloneqq \Delta_{\bar{x}_{\hat{c}^*,\hat{x}}} - \Delta_{\bar{x}_{c^*, \hat{x}}} - \big(\Retr_{x^*}^{-1}(\bar{x}_{\hat{c}^*,\hat{x}}) -  \Retr_{x^*}^{-1}({\bar{x}_{c^*, \hat{x}}}) \big)$, we have $\| \epsilon_r\|_{x^*} = O(d^2(x_0, x^*))$. In addition, based on Assumption \ref{vector_transport_bound}, we show 
\begin{align*}
    &\| \gT_{x_k}^{x^*} r_i - \big(  \Retr_{x^*}^{-1}(x_{i+1}) - \Retr_{x^*}^{-1}(x_i) \big) - \Gamma_{x_k}^{x^*} r_i + \big( \Delta_{x_{i+1}} - \Delta_{x_{i}} \big) \|_{x^*} \\
    &\leq \| \gT_{x_k}^{x^*} r_i - \Gamma_{x_k}^{x^*} r_i \|_{x^*} + O(d^2(x_0, x^*)) =  O(d^2(x_0, x^*)),
\end{align*}
where we use Assumption \ref{retr_exp_bound}, \ref{vector_transport_bound} and notice $\| r_i \|_{x_i} = \| \Retr_{x_i}^{-1}(x_{i+1}) \|_{x_i} \leq a_1 d(x_i, x_{i+1}) = O(d(x_0, x^*))$. Let $\epsilon_v \coloneqq \gT_{x_k}^{x^*} r_i - \big(  \Retr_{x^*}^{-1}(x_{i+1}) - \Retr_{x^*}^{-1}(x_i) \big) - \Gamma_{x_k}^{x^*} r_i + \big( \Delta_{x_{i+1}} - \Delta_{x_{i}} \big)$, we have $\| \epsilon_v \|_{x^*} = O(d^2(x_0, x^*))$.

Using Lemma \ref{lemma_metric_distort}, we then obtain
\begin{align*}
    d(\bar{x}_{\hat{c}^*,\hat{x}}, \bar{x}_{c^*, \hat{x}}) \leq C_1 \| \Delta_{\bar{x}_{\hat{c}^*,\hat{x}}} - \Delta_{\bar{x}_{c^*, \hat{x}}} \|_{x^*} &\leq C_1 \| \Retr_{x^*}^{-1}(\bar{x}_{\hat{c}^*,\hat{x}}) -  \Retr_{x^*}^{-1}({\bar{x}_{c^*, \hat{x}}})  \|_{x^*} + C_1 \| \epsilon_r \|_{x^*} \\
    &\leq \frac{C_1 \| \Retr_{x^*}^{-1}(x_0) \|_{x^*}}{1-\sigma} \| c^* - \hat{c}^* \|_2 + O(d^2(x_0, x^*)),
\end{align*}
where we apply \eqref{retr_lemma4}.
Now we proceed to bound $\| c^* - \hat{c}^* \|_2 \leq \frac{\| P \|_2}{\lambda} \| \hat{c}^* \|_2$ in a similar manner as Lemma \ref{coeff_bound} where $P = R - \hat{R}$. From the proof of Lemma \ref{coeff_bound}, we have 
\begin{equation*}
    \| P \|_2 \leq \frac{2}{1-\sigma} \|\Retr_{x^*}^{-1}(x_0) \|_{x^*} \| E \|_2 + \| E\|_2^2,
\end{equation*}
where $\| E \|_2 \leq \sum_{i=0}^k \| \gT_{x_k}^{x^*} r_i - \hat{r}_i \|_{x^*}$. Thus it remains to bound $\| \gT_{x_k}^{x^*} r_i - \hat{r}_i \|_{x^*}$. Similarly, we can show
\begin{align*}
    \| \gT_{x_k}^{x^*} r_i - \hat{r}_i \|_{x^*} &\leq \| \gT_{x_k}^{x^*} r_i - \big(\Retr_{x^*}^{-1}(x_{i+1}) - \Retr_{x^*}^{-1}(x_i) \big) \|_{x^*} + \sum_{j=1}^{i+1} \| \varepsilon_j \|_{x^*} \\
    &\leq \| \Gamma_{x_k}^{x^*} r_i - \big( \Delta_{x_{i+1}} - \Delta_{x_{i}} \big) \|_{x^*} + \| \epsilon_v \|_{x^*} + \sum_{j=1}^{i+1} \| \varepsilon_j \|_{x^*} = O(d^2(x_0, x^*)),
\end{align*}
where $\varepsilon_j$ is defined in \eqref{retr_lemma3} and we use Lemma \ref{diff_bound_gammatransport} for the exponential map. Thus $\| P\|_2 \leq 2 \psi \frac{a_1}{1-\sigma} d(x_0, x^*) + \psi^2$ where $\psi = O(d^2(x_0, x^*))$. This leads to 
\begin{align*}
    d(\bar{x}_{\hat{c}^*,\hat{x}}, \bar{x}_{c^*, \hat{x}}) \leq \frac{C_1 \|\Retr_{x^*}^{-1}(x_0) \|_{x^*}}{\lambda(1-\sigma)} \Big(  \frac{2 \psi}{1-\sigma} \|\Retr_{x^*}^{-1}(x_0) \|_{x^*}  + \psi^2 \Big) \| \hat{c}^* \|_2 + \epsilon_2,
\end{align*}
where $\epsilon_2 = O(d^2(x_0, x^*))$.

(III). Finally for the nonlinearity term ${d(\bar{x}_{c^*, \hat{x}}, \bar{x}_{c^*, x})}$, we show 
\begin{align*}
    d(\bar{x}_{c^*, \hat{x}}, \bar{x}_{c^*, x}) &\leq C_1 \| \Delta_{\bar{x}_{c^*, \hat{x}}} - \Delta_{\bar{x}_{c^*, x}} \|_{x^*} \leq C_1 \| \Retr_{x^*}^{-1}(\bar{x}_{c^*,\hat{x}}) -  \Retr_{x^*}^{-1}({\bar{x}_{c^*, {x}}}) \|_{x^*} + O(d^2(x_0, x^*)) \\
    &\leq C_1 \| c^* \|_2 (\sum_{i=0}^k \| \Retr_{x^*}^{-1}(x_i) - \Retr_{x^*}^{-1}(\hat{x}_i) \|_{x^*}) + O(d^2(x_0, x^*)) \\
    &\leq C_1 \sqrt{\frac{\sum_{i=0}^k  \|\Retr_{x_i}^{-1}(x_{i+1}) \|_{x_i}^2 + \lambda}{(k+1)\lambda}}\Big( \sum_{i=0}^k \sum_{j=0}^i \| \varepsilon_j \|_{x^*} \Big) + \epsilon_3,
\end{align*}
where $\epsilon_3 = O(d^2(x_0, x^*))$ and we follow similar steps as in Lemma \ref{coeff_bound}. 

Finally, combining results from (I), (II), (III), we have
\begin{align*}
    d(\bar{x}_{c^*, x}, x^*) &\leq \frac{\|\Retr_{x^*}^{-1}(x_0) \|_{x^*}}{a_0(1-\sigma)} \sqrt{(S^{[0,\sigma]}_{k, \bar{\lambda}})^2 - \frac{\lambda}{\|\Retr_{x^*}^{-1}(x_0) \|_{x^*}^2} \| \hat{c}^* \|^2_2}  \\
    &\qquad + \frac{C_1 \|\Retr_{x^*}^{-1}(x_0) \|_{x^*}}{\lambda(1-\sigma)} \Big(  \frac{2 \psi}{1-\sigma} \|\Retr_{x^*}^{-1}(x_0) \|_{x^*}  + \psi^2 \Big) \| \hat{c}^* \|_2  \\
    &\qquad + C_1 \sqrt{\frac{\sum_{i=0}^k  \|\Retr_{x_i}^{-1}(x_{i+1}) \|_{x_i}^2 + \lambda}{(k+1)\lambda}}\Big( \sum_{i=0}^k \sum_{j=0}^i \| \varepsilon_j \|_{x^*} \Big)  +\epsilon_1 + \epsilon_2 +  \epsilon_3.
\end{align*}
Maximizing the bound over $\| \hat{c}^* \|_2$ yields
\begin{align*}
    d(\bar{x}_{c^*, x}, x^*) &\leq S^{[0,\sigma]}_{k, \bar{\lambda}} \sqrt{\frac{\|\Retr_{x^*}^{-1}(x_0) \|_{x^*}^2}{a_0^2 (1-\sigma)^2} +  \frac{C_1^2 \|\Retr_{x^*}^{-1}(x_0) \|_{x^*}^4 \big( \frac{2 \psi}{1-\sigma} \|\Retr_{x^*}^{-1}(x_0) \|_{x^*}  + \psi^2 \big)^2 }{\lambda^3(1-\sigma)^2}} \\
    &\qquad + C_1 \sqrt{\frac{\sum_{i=0}^k  \|\Retr_{x_i}^{-1}(x_{i+1}) \|_{x_i}^2 + \lambda}{(k+1)\lambda}}\Big( \sum_{i=0}^k \sum_{j=0}^i \| \varepsilon_j \|_{x^*} \Big)  +\epsilon_1 + \epsilon_2 +  \epsilon_3.
\end{align*}
Finally, to see the asymptotic convergence rate, we notice that $ \|\Retr_{x^*}^{-1}(x_{0}) \|_{x^*} = O(d(x_0, x^*))$ and $\lim_{d(x_0, x^*) \xrightarrow{} 0} \frac{1}{d(x_0, x^*)} (\epsilon_1 + \epsilon_2 + \epsilon_3) = 0$. 
\end{proof}

\end{document}